\pgfmathtruncatemacro\distance{1}
\newcommand{\dmneg}{{\sim}}
\newcommand{\ISVar}{\mathbb{IS}}
\newcommand{\PPVar}{\mathbb{PP}}
\newcommand{\PPSix}{\mathbf{PP_6}}
\newcommand{\ISSix}{\mathbf{IS_6}}
\newcommand{\ISlogic}{\mathcal{IS}_\leq}
\newcommand{\ISlogicMC}{\mathcal{IS}_{\leq}^{\mathsf{\sequent}}}
\newcommand{\ISAlg}{\mathbf{IS}}
\newcommand{\PPlogic}{\mathcal{PP}_\leq}
\newcommand{\PPlogicMC}{\mathcal{PP}_{\leq}^{\mathsf{\sequent}}}
\newcommand{\PPOnelogic}{\mathcal{PP}_\top}
\newcommand{\PPOnelogicMC}{\mathcal{PP}_{\top}^{\mathsf{\sequent}}}
\newcommand{\FDELogic}{\mathcal{B}}
\newcommand{\powerset}{\wp}
\newcommand{\sequent}{\rhd}
\newcommand{\notsequent}{\,\smallblacktriangleright}
\newcommand{\SetSet}{\textsc{Set-Set}}
\newcommand{\CalcVar}{\mathsf{R}}
\newcommand{\SetFmla}{\textsc{Set-Fmla}}
\newcommand{\Props}{\mathsf{props}}
\newcommand{\sequentx}[1]{\,\rhd_{#1}\,}
\newcommand{\notsequentx}[1]{\smallblacktriangleright_{#1}}
\newcommand{\cons}{{\circ}}
\newcommand{\inferx}[3][]{\frac{#2}{#3}\,#1}
\newcommand{\Hom}{\mathsf{Hom}}
\newcommand{\DefMat}{\mathfrak{M}}
\newcommand{\End}{\mathsf{End}}
\newcommand{\EqName}[1]{\textbf{#1}}
\newcommand{\Variety}{\mathbb{V}}
\newcommand{\LangAlg}[1]{\mathbf{L}_{#1}(P)}
\newcommand{\LangSet}[1]{L_{#1}(P)}
\newcommand{\DefProp}{p}
\newcommand{\DeffProp}{q}
\newcommand{\Fm}{\varphi}
\newcommand{\Fmm}{\psi}
\newcommand{\DMNeg}{{\sim}}
\newcommand{\PPComp}{\neg}
\newcommand{\DefCon}{\copyright}
\newcommand{\LSig}{\Sigma^{\mathsf{bL}}}
\newcommand{\DMSig}{\Sigma^{\mathsf{DM}}}
\newcommand{\DMoSig}{\Sigma^{\mathsf{PP}}}
\newcommand{\DMnSig}{\Sigma^{\mathsf{IS}}}
\newcommand{\OneAssert}{\vdash^{\top}}
\newcommand{\PPAlg}{\mathbf{PP}}
\newcommand{\DMPPExtend}[1]{#1^{\cons}}
\newcommand{\DMISExtend}[1]{#1^{\nabla}}
\newcommand{\DMReductPP}[1]{\widehat{\DMPPExtend{#1}}}
\newcommand{\DMReductIS}[1]{\widehat{\DMISExtend{#1}}}
\newcommand{\Reduce}[1]{{#1}^{\ast}}
\newcommand{\SuperBelnapCon}{\vdash}
\newcommand{\ReducedMatrices}{\mathsf{Mat^\ast}}
\newcommand{\CL}{\mathcal{CL}}
\DeclareMathOperator{\ConSet}{\mathsf{Cng}}
\DeclareMathOperator{\Logic}{\mathsf{Log}}
\newcommand{\neither}{\ensuremath{\mathbf{n}}}
\newcommand{\both}{\ensuremath{\mathbf{b}}}
\newcommand{\fvalue}{\ensuremath{\mathbf{f}}}
\newcommand{\tvalue}{\ensuremath{\mathbf{t}}}
\newcommand{\efvalue}{\ensuremath{\hat{\mathbf{f}}}}
\newcommand{\etvalue}{\ensuremath{\hat{\mathbf{t}}}}
\newcommand{\aefvalue}{\ensuremath{\hat{\mathbf{f}}}}
\newcommand{\aetvalue}{\ensuremath{\hat{\mathbf{t}}}}
\newcommand{\FourSet}{\mathcal{V}_4}
\newcommand{\SixSet}{\mathcal{V}_6}
\newcommand{\SymbDef}{\,{\coloneqq}\,}
\newcommand{\FDEAlg}{\mathbf{DM}_4}
\newcommand{\KleeneAlg}{\mathbf{K}_3}
\newcommand{\BoolAlg}{\mathbf{B}_2}
\newcommand{\Upset}[1]{{{\uparrow}#1}}
\newcommand{\SymLogEquiv}{{\lhd\rhd}}
\newcommand{\CompSet}{\bigwhitestar}
\newcommand{\LeibCong}[1]{\Omega^{#1}}
\newcommand{\NodeSet}{\mathsf{nds}}
\newcommand{\TreeRel}{\mathsf{\leq}}
\newcommand{\DefTree}{t}
\newcommand{\DefNode}{n}
\newcommand{\Label}[1]{l^{#1}}
\newcommand{\Disc}{\ast}
\newcommand{\Root}[1]{\mathsf{rt}(#1)}
\newcommand{\Children}[1]{\mathsf{chdr}}
\newtheorem{theorem}{Theorem}[section]
\newtheorem{lemma}[theorem]{Lemma}
\newtheorem{definition}[theorem]{Definition}
\newtheorem{corollary}[theorem]{Corollary}
\newtheorem{proposition}[theorem]{Proposition}
\newtheorem{example}[theorem]{Example}
\newcommand{\FmSetA}{\Phi}
\newcommand{\FmSetB}{\Psi}
\newcommand{\FmSetC}{\Pi}
\newcommand{\FmSetD}{\Theta}
\newcommand{\FmSetAnalytic}{\Xi}
\newcommand{\FmA}{\varphi}
\newcommand{\FmB}{\psi}
\newcommand{\FmC}{\phi}
\newcommand\blfootnote[1]{%
  \begingroup
  \renewcommand\thefootnote{}\footnote{#1}%
  \addtocounter{footnote}{-1}%
  \endgroup
}
\title{On Logics of Perfect Paradefinite Algebras}
\author{Joel Gomes \qquad Vitor Greati
\institute{ Programa de Pós-graduação em Sistemas e Computação (PPgSC)\\
  Departamento de Informática e Matemática Aplicada (DIMAp)\\
  Universidade Federal do Rio Grande do Norte (UFRN)\\
Natal -- RN, Brazil}
\email{\{joel.gomes.108,vitor.greati.017\}@ufrn.edu.br}
\and
Sérgio Marcelino
\institute{SQIG -- Instituto de Telecomunicações\\
Dep. de Matemática -- Instituto Superior Técnico\\
Universidade de Lisboa, Portugal}
\email{smarcel@math.tecnico.ulisboa.pt}
\and
João Marcos \qquad Umberto Rivieccio
\institute{Departamento de Informática e Matemática Aplicada (DIMAp)\\
  Universidade Federal do Rio Grande do Norte (UFRN)\\
Natal -- RN, Brazil}
\email{\{jmarcos,urivieccio\}@dimap.ufrn.br}
}
\begin{document}
\maketitle

\begin{abstract}
The present study shows how to enrich De Morgan algebras 
with a \emph{perfection operator} that allows one to express the Boolean properties of negation-consistency and negation-de\-ter\-minedness.
The variety of \emph{perfect paradefinite algebras} thus obtained (\emph{PP-algebras}) is shown to be term-equiv\-a\-lent to the variety of involutive Stone algebras, introduced by R.\ Cignoli and M.\ Sagastume, and more recently studied from a logical perspective by M.\ Figallo-L.\ Cantú and by S.~Marcelino-U.~Rivieccio. 
This equivalence  plays an important role in the investigation of the $1$-assertional logic and of the order-preserving logic associated to  PP-algebras. 
The latter logic (here called~$\PPlogic$) is characterized by a single 6-valued matrix and is shown to be 
a Logic of Formal Inconsistency and Formal Undeterminedness.
We axiomatize 
$\PPlogic$ 
by means of an analytic finite Hilbert-style calculus, and we present an 
axiomatization procedure 
that covers the logics corresponding to other classes of De Morgan algebras 
enriched by a perfection operator.
\end{abstract}

\section{Introduction}
\blfootnote{Vitor Greati acknowledges financial support from
the Coordenação de Aperfeiçoamento de Pessoal de Nível Superior --- Brasil (CAPES) --- Finance Code 001.
João Marcos acknowledges partial
support by Conselho Nacional de Desenvolvimento Científico e Tecnológico (CNPq). Sérgio Marcelino's research was done under the scope of Project UIDB/50008/2020
of Instituto de Telecomunica\c{c}\~oes (IT), financed by the applicable framework
(FCT/MEC through national funds and cofunded by FEDER-PT2020).}
The variety of De Morgan algebras consists of all bounded distributive lattices equipped with a primitive 
involutive negation operation $\DMNeg$ satisfying the
well-known De Morgan laws. 
Such negation need not be Boolean, that is, it may fail to satisy the equations $x\lor\DMNeg x\approx\top$ and $x\land\DMNeg x\approx\bot$, respectively expressing the classical `negation-determinedness' and the `negation-consistency' assumptions.
Involutive Stone algebras (henceforth referred to as IS-algebras) are obtained by enriching De Morgan algebras with a further unary operation~$\nabla$ that allows for the definition of a pseudo-complement operator~$\PPComp$ satisfying the Stone equation $\PPComp x \lor \PPComp\PPComp x \approx \top$ (as well as its dual, $\PPComp x \land \PPComp\PPComp x \approx \bot$).


While the order-preserving logic canonically induced by De Morgan algebras, namely Dunn-Belnap's 4-valued logic \cite{belnap1977},
has been extensively studied over the last four decades, the logic similarly induced by IS-algebras (which we call $\ISlogic$) has only recently attracted due attention \cite{cantu:2019,cantu:2020,marcelino2021}.
The latter studies make (but do not pursue to any significant length) an observation that we shall take as the starting point of our present work, namely that, by replacing $\nabla$ with a unary `consistency operator' (here denoted by~$\circ$), it is possible to view $\ISlogic$ as a Logic of Formal Inconsistency (\cite{03-CCM-lfi}). 


From the point of view of non-classical logics,
some of the most prominent features of $\ISlogic$ are the facts that it is paradefinite \cite{avron2018} (it is, indeed, at once $\DMNeg$-paraconsistent and $\DMNeg$-paracomplete, both properties being inherited from the Dunn-Belnap logic), and yet, with the help of the single connective~$\circ$, it may be seen to be expressive enough so as to fully recover the `lost perfection' of classical negation, 
by being at once $\DMNeg$-gently explosive and $\DMNeg$-gently implosive \cite{jm2005thesis}; 
in other words, $\ISlogic$ may be seen as a Logic of Formal Inconsistency (\textbf{LFI}) and a Logic of Formal Undeterminedness (\textbf{LFU}),
in the sense of~\cite{jmarcos2005}. 
These features, however, are somehow concealed by the usual presentation of IS-algebras in terms of~$\nabla$, an algebraic operator whose significance and philosophical motivations are unclear.%
\footnote{For the 3-valued case, such a `possibility' operator is known at least since \cite{JanLuke1930}, where J.\ \L ukasiewicz notes it has been first defined during one of his 1921 seminars by a student called Tarski.  The lack of a robust \textit{modal} reading for such an operator, however, has caused it to have largely fallen by the wayside over the following decades.}


Logics that allow for the internalization of the very notions of negation-consistency and negation-determinedness at the object-language level have been extensively studied in the last two decades (cf.\ \cite{carnielli2019}, for example, for the so-called `classicality', `restoration', `recapture', or `recovery' operators).
In order to establish a fruitful dialogue with the logical study of negation, we  thus propose 
an alternative rendition of IS-algebras in terms of structures that we christen `perfect paradefinite algebras' (or more briefly PP-algebras), obtained by replacing~$\nabla$ with a primitive perfection operation~$\cons$.
The significance of such an approach, we believe, is not only nor primarily technical; instead, it lies mainly in a clarification of the intuitive meanings associated to the propositional connectives employed to present $\ISlogic$. As explained below, another important consequence of our work will be the possibility of singling out new meaningful logical axioms (expressed in the alternative language we propose for $\ISlogic$)  that present more general logics than (i.e.~weakenings of)~$\ISlogic$. 

The equational characterization we present for PP-algebras will not only guarantee that the corresponding variety is term-equivalent to the variety of IS-algebras
but also highlight the expressive paradefinite character of the order-preserving logic thereby induced ($\PPlogic$). The latter logic will be shown, 
more specifically, 
to constitute a fully self-extensional and non-protoalgebraic member of the families of logics known as \textbf{C}-systems and \textbf{D}-systems (detailed explanations and discussions about the latter classes may be found in \cite{jm2005thesis}).
A~procedure for constructing a PP-algebra using a De Morgan algebra as material is introduced and the logic $\PPlogic$ is shown to be characterizable, like $\ISlogic$, by a single six-element logical matrix. 
Lastly, we  also  provide a well-behaved symmetrical Hilbert-style calculus for the \SetSet{} logics determined by logical matrices based on De Morgan algebras enriched with~$\cons$, as well as conventional Hilbert-style calculi for the \SetFmla{} logics determined by logical matrices with prime filters based on De Morgan algebras enriched with~$\cons$ (and, in particular, an analytical proof system for the logic $\PPlogic$ itself).

    In distinction to what has been done
    in the study of logics associated to IS-algebras~\cite{cantu:2019,cantu:2020,marcelino2021},
    in the present work we take the more general path of first obtaining results on the \SetSet{} order-preserving logic (denoted by $\PPlogicMC{}$)
    and on the 1-assertional logic (denoted by $\PPOnelogicMC{}$) associated to PP-algebras,
    and then specializing them to the corresponding \SetFmla{} logics.
    From a proof-theoretical viewpoint, the present paper may thus also be viewed as providing another illustration (additional to the one in~\cite{marcelino2021}) of the wide range of applicability of the machinery of \SetSet{} Hilbert-style calculi, which has recently been further developed 
    in~\cite{marcelinowollic, marcelino:2019}.
  Indeed,
  having established that
 $\PPlogicMC{}$
  is characterizable by a matrix that is finite and sufficiently expressive,
  the problem of obtaining a finite and analytical  \SetSet{} calculus for it
    can be solved by an application of the algorithm of \cite{marcelino:2019},
   which we used via the implementation of~\cite{vitor2021}.
   In order to make the present paper self-contained, we have, though, also included here the proofs of completeness and analyticity of the \SetSet{} calculus thus obtained.
   The conventional \SetFmla{} (non-analytic) axiomatization of $\PPlogic$ is then obtained from the \SetSet{} axiomatization following the general procedure laid out in~\cite[Th.5.37]{shoesmith_smiley:1978}.

   
   
    %
    

The remainder of the paper is organized as follows.
Section \ref{sec:preliminares} provides the basic notions on algebras and logics, the latter considered from the semantical perspective of logical matrices and from the proof-theoretical
viewpoint through Hilbert-style deductive systems.
Section \ref{sec:paradefinite-stone-algebras} introduces the variety of perfect paradefinite algebras and discusses the semantical aspects of the associated logics. More specifically, we prove that these algebras are term-equivalent to the involutive Stone algebras (Theorem~\ref{the:equiv}),
also enjoying, thus, the property of being generated by a single, six-element algebra (Proposition~\ref{prop:sixgenerated}).
We then show that the order-preserving
and 1-assertional logics associated
to this variety are respectively generated
by a 6-valued and a 3-valued logical matrix (Theorem~\ref{the:order-preserving-six-generated}
and Proposition~\ref{the:one-assert-three-generated}). 
Subsequently, we provide a recipe to endow a De Morgan algebra with a perfection operator (Definition~\ref{def:expcons})
and study the lattice of extensions
of the corresponding \SetFmla{} order-preserving logic, showing that it has at least the cardinality of the continuum (Corollary~\ref{coro:embdsupbelnap}).
We close the section by studying 
paradefinite extensions of Belnap-Dunn's 4-valued logic, showing how to recover classical reasoning via assumptions expressed with the help of the perfection operator (Theorem~\ref{the:datgeneral}).
Section \ref{sec:axiomatization}
provides analytic \SetSet{} Hilbert-style deductive systems for logical matrices
based on De Morgan algebras endowed with a consistency operator (Theorem~\ref{the:expcalculus}),
as well as \SetFmla{} Hilbert-style systems
for logics determined by matrices based on De Morgan algebras and prime filters (Theorem~\ref{the:set-fmla-calculi}).
Finally, Section~\ref{sec:finalremarks}
contains some concluding remarks
and outlines future directions of research.%

\section{Algebraic and logical preliminaries} \label{sec:preliminares}

A \emph{propositional signature}
is a family
$\Sigma \SymbDef \{\Sigma_k\}_{k \in \omega}$,
where each $\Sigma_k$ is
a collection of $k$-ary \emph{connectives}. 
A \emph{$\Sigma$-algebra} is a structure
$\mathbf{A} \SymbDef \langle A, \cdot^\mathbf{A} \rangle$, where $A$ is a non-empty
set called the \emph{carrier} of $\mathbf{A}$ and, for each $\DefCon \in \Sigma_k$, $\DefCon^\mathbf{A} : A^k \to A$ is the \emph{interpretation} of $\DefCon$
in $\mathbf{A}$.
Given a denumerable set $P \supseteq \{p, q, r, x, y\}$, 
the absolutely free algebra over
$\Sigma$ freely generated by $P$,
or simply the
\emph{language} over $\Sigma$ (generated by $P$),
is denoted by $\LangAlg{\Sigma}$,
and its members are called \emph{$\Sigma$-formulas}.
The collection of all propositional variables
occurring in a formula $\Fm \in \LangSet{\Sigma}$
is denoted by $\Props(\Fm)$, and
we let $\Props(\FmSetA) \SymbDef \bigcup_{\Fm\in\FmSetA}\Props(\Fm)$, 
for all $\FmSetA\subseteq\LangSet{\Sigma}$.
Given $\Sigma^\prime \subseteq \Sigma$
(that is, $\Sigma^\prime_k \subseteq \Sigma_k$ for all $k \in \omega$),
the \emph{$\Sigma'$-reduct} of a $\Sigma$-algebra $\mathbf{A}$ is the $\Sigma'$-algebra over
the same carrier of $\mathbf{A}$ that agrees with $\mathbf{A}$ on the interpretation of the connectives in $\Sigma'$.
The collection of homomorphisms between
two $\Sigma$-algebras $\mathbf{A}$
and $\mathbf{B}$ is denoted by $\Hom(\mathbf{A},\mathbf{B})$,
and the collection of mappings that are
structure-preserving 
over $\Sigma' \subseteq \Sigma$ is
denoted by $\Hom_{\Sigma'}(\mathbf{A},\mathbf{B})$.
Furthermore, the set
of endomorphisms on $\mathbf{A}$ is denoted by
$\End(\mathbf{A})$
and each one of the members $\sigma \in \End(\LangAlg{\Sigma})$
is called a \emph{substitution}.
The elements of $\Hom(\LangAlg{\Sigma}, \mathbf{A})$ will sometimes be referred to as \emph{valuations on~$\mathbf{A}$}.
{Given $h,h' \in \Hom(\LangAlg{\Sigma}, \mathbf{A})$,
we shall say that \emph{$h'$ agrees with~$h$ on $\FmSetA \subseteq \LangSet{\Sigma}$} provided that $h'(\FmA)=h(\FmA)$
for all $\FmA \in \FmSetA$.}
In case $p_1,\ldots,p_n$
are the only propositional variables ocurring in $\FmA \in \LangSet{\Sigma}$,
we say that $\FmA$ is $n$-ary and denote by $\FmA^\mathbf{A}$
the $n$-ary operation on $A$
such that,
for all $a_1,\ldots,a_n \in A$,
$\FmA^\mathbf{A}(a_1,\ldots,a_n) = h(\FmA)$, for
an $h \in \Hom(\LangAlg{\Sigma}, \mathbf{A})$ with
$h(p_i) = a_i$ for each
$1 \leq i \leq n$.
Also, if $\FmB_1,\ldots,\FmB_n \in \LangSet{\Sigma}$,
we let $\FmA(\FmB_1,\ldots,\FmB_n)$ denote the formula
$\FmA^{\LangAlg{\Sigma}}(\FmB_1,\ldots,\FmB_n)$.
A~\emph{$\Sigma$-equation} is
a pair $(\FmA,\FmB)$ of $\Sigma$-formulas that we will denote by
$\FmA \approx \FmB$,
and a $\Sigma$-algebra $\mathbf{A}$ is said to \emph{satisfy}
$\FmA \approx \FmB$ if
$h(\FmA)=h(\FmB)$
for every $h \in \Hom(\LangAlg{\Sigma}, \mathbf{A})$. 
We call \emph{$\Sigma$-variety}
the class of all $\Sigma$-algebras that satisfy the same given collection of $\Sigma$-equations; an equation is said to be \emph{valid} in a given variety if it is satisfied by each algebra in this variety. 
The variety generated by
a class $\mathsf{K}$ of $\Sigma$-algebras,
denoted by $\mathbb{V}(\mathsf{K})$,
is the closure of $\mathsf{K}$ under 
homomorphic images, subalgebras 
and direct products.
We write $\ConSet\mathbf{A}$ 
to refer to the collection of all congruence relations
on~$\mathbf{A}$, which is known to form
a complete lattice under inclusion.

In what follows, we assume the reader is familiar with basic notations and terminology of lattice theory~\cite{priestley2002}. We denote by $\LSig$ the signature
containing but two binary connectives,
$\land$ and $\lor$, and
two nullary connectives $\top$ and $\bot$, and by $\DMSig$ the extension of the latter signature by the addition of a unary connective~$\DMNeg$.
Moreover, we let $\DMnSig$ and $\DMoSig$ be the signatures obtained
from $\DMSig$ by adding unary connectives $\nabla$ and $\cons$, respectively.
We provide below
the definitions and some examples of
De Morgan algebras and of involutive
Stone algebras.


\vspace{.5em}
\begin{definition}

Given a $\DMSig$-algebra whose
$\LSig$-reduct is a bounded distributive lattice, we say that it
constitutes a \emph{De Morgan algebra} if it
satisfies the equations:
\begin{table}[H]
\begin{tabular}{@{}ll@{}}
    \textbf{(DM1)} $\DMNeg \DMNeg x \approx x$
    \hspace{1cm}
    &
    \textbf{(DM2)} $\DMNeg (x \land y) \approx \DMNeg x \lor \DMNeg y$\\
\end{tabular}
\end{table}
\end{definition}

\begin{example}
Let $\FourSet \SymbDef \{\tvalue, \both, \neither, \fvalue\}$
and
let $\FDEAlg \SymbDef \langle \FourSet, \cdot^{\FDEAlg} \rangle$
be the $\DMSig$-algebra known as
the Dunn-Belnap lattice, whose interpretations
for the lattice connectives
are those induced by the Hasse diagram in
Figure {\rm \ref{fig:de_morgan}},
and the interpretation for 
$\DMNeg$ is such that
$\DMNeg^{\FDEAlg}\fvalue \SymbDef \tvalue$,
$\DMNeg^{\FDEAlg}\tvalue \SymbDef \fvalue$
and
$\DMNeg^{\FDEAlg} a \SymbDef a$,
for $a \in \{\neither,\both\}$;
as expected, for the nullary connectives, we have
$\top^{\FDEAlg} \;\SymbDef\; \tvalue$
and $\bot^{\FDEAlg} \;\SymbDef\; \fvalue$. In Figure~{\rm\ref{fig:de_morgan}},
besides depicting the lattice structure of $\FDEAlg$, we also show
its subalgebras $\KleeneAlg$ and $\BoolAlg$, which 
coincide with the three-element
Kleene algebra and the two-element Boolean algebra.
These three algebras are the only subdirectly irreducible De Morgan algebras~{\rm\cite{dwinger1975}}.
\end{example}
\begin{figure}[ht]
    \centering
    \begin{subfigure}[b]{.45\textwidth}
    \centering
    \begin{tikzpicture}[node distance=1.25cm]
    \node (0DM4)                      {$\fvalue$};
    \node (BDM4)  [above right of=0DM4]  {$\both$};
    \node (ADM4)  [above left of=0DM4]   {$\neither$};
    \node (1DM4)  [above right of=ADM4]  {$\tvalue$};
    \node (AK3)  [right of=BDM4] {$\neither$};
    \node (0K3)  [below=0.4\distance of AK3]  {$\fvalue$};
    \node (1K3)  [above=0.4\distance of AK3] {$\tvalue$};
    \node (0B2)  [right of=0K3]  {$\fvalue$};
    \node (1B2)  [right of=AK3] {$\tvalue$};
    \node (DM4)  [below=1.5\distance of 0DM4] {$\mathbf{DM}_4$};
    \node (K3)  [below=1.5\distance of 0K3] {$\mathbf{K}_3$};
    \node (B2)  [below=1.5\distance of 0B2] {$\mathbf{B}_2$};
    \draw (0DM4)   -- (ADM4);
    \draw (0DM4)   -- (BDM4);
    \draw (BDM4)   -- (1DM4);
    \draw (ADM4)  -- (1DM4);
    \draw (AK3)  -- (0K3);
    \draw (AK3)  -- (1K3);
    \draw (0B2)   -- (1B2);
    \end{tikzpicture}
    \caption{The subdirectly irreducible De Morgan algebras.}
    \label{fig:de_morgan}
    \end{subfigure}
    \hfill
    \begin{subfigure}[b]{.5\textwidth}
    \centering
    \begin{tikzpicture}[node distance=1.25cm]
    \node (0PP6)                      {$\fvalue$};
    \node (BPP6)  [above right of=0PP6]  {$\both$};
    \node (APP6)  [above left of=0PP6]   {$\neither$};
    \node (1PP6)  [above right of=APP6]  {$\tvalue$};
    \node (00PP6)  [below=0.4\distance of 0PP6] {$\efvalue$};
    \node (11PP6)  [above=0.4\distance of 1PP6] {$\etvalue$};
    \node (APP5)  [right of=BPP6] {$\neither$};
    \node (0PP5)  [below=0.4\distance of APP5]  {$\fvalue$};
    \node (1PP5)  [above=0.4\distance of APP5] {$\tvalue$};
    \node (00PP5)  [below=0.4\distance of 0PP5] {$\efvalue$};
    \node (11PP5)  [above=0.4\distance of 1PP5] {$\etvalue$};
    \node (0PP4)  [right of=0PP5]  {$\fvalue$};
    \node (1PP4)  [right of=APP5] {$\tvalue$};
    \node (00PP4)  [below=0.4\distance of 0PP4] {$\efvalue$};
    \node (11PP4)  [above=0.4\distance of 1PP4] {$\etvalue$};
    \node (00PP3)  [right of=00PP4] {$\efvalue$};
    \node (0PP3) [above=0.4\distance of 00PP3] {$\neither$};
    \node (1PP3) [above=0.4\distance of 0PP3] {$\etvalue$};
    \node (PP6)  [below=0.4\distance of 00PP6] {$\mathbf{IS}_6$};
    \node (PP5)  [below=0.4\distance of 00PP5] {$\mathbf{IS}_5$};
    \node (PP4)  [below=0.4\distance of 00PP4] {$\mathbf{IS}_4$};
    \node (PP3)  [below=0.4\distance of 00PP3] {$\mathbf{IS}_3$};
    \node (00PP2)  [right of=00PP3]  {$\efvalue$};
    \node (11PP2)  [above=0.4\distance of 00PP2] {$\etvalue$};
    \node (PP2)  [below=0.4\distance of 00PP2] {$\mathbf{IS}_2$};
    \draw (0PP6)   -- (APP6);
    \draw (0PP6)   -- (BPP6);
    \draw (BPP6)   -- (1PP6);
    \draw (APP6)  -- (1PP6);
    \draw (0PP6)   -- (00PP6);
    \draw (1PP6)   -- (11PP6);
    \draw (APP5)  -- (0PP5);
    \draw (APP5)  -- (1PP5);
    \draw (0PP5)   -- (00PP5);
    \draw (1PP5)   -- (11PP5);
    \draw (0PP4)   -- (1PP4);
    \draw (0PP4)   -- (00PP4);
    \draw (1PP4)   -- (11PP4);
    \draw (0PP3) -- (00PP3);
    \draw (1PP3) -- (0PP3);
    \draw (00PP2) -- (11PP2);
    \end{tikzpicture}
    \caption{The subdirectly irreducible IS-algebras.}
    \label{fig:pp_algebra}
    \end{subfigure}
    \vspace{-.75em}
    \caption{}
\end{figure}
\vspace{-.75em}



\begin{definition}
Given a $\DMnSig$-algebra whose
$\DMSig$-reduct is a De Morgan
algebra, we say that it
constitutes an \emph{involutive Stone algebra} (\emph{IS-algebra}) if it
satisfies the equations:
\begin{table}[H]
\setlength{\tabcolsep}{3.5pt}
\begin{tabular}{@{}llll@{}}
    \textbf{(IS1)} $\nabla \bot \approx \bot$
    \hspace{5mm}
    &
    \textbf{(IS2)} $x \land \nabla x \approx x$
    \hspace{5mm}
    & 
    \textbf{(IS3)} $\nabla(x \land y) \approx \nabla x \land \nabla y$
    \hspace{5mm}
    &
    \textbf{(IS4)} $\DMNeg\nabla x \land \nabla x \approx \bot$
\end{tabular}
\end{table}
\end{definition}

\begin{example}
\label{ex:issix}
Let $\SixSet \SymbDef \FourSet \cup \{\efvalue,\etvalue\}$
and let $\ISAlg_6 \SymbDef \langle \SixSet, \cdot^{\ISAlg_6} \rangle$
be the $\DMnSig$-algebra
whose lattice structure is depicted in Figure~{\rm \ref{fig:pp_algebra}}
and interprets $\DMNeg$ and $\nabla$ as per the following:
\[
\DMNeg^{\ISAlg_6} a \SymbDef
\begin{cases}
    \DMNeg^{\FDEAlg} a & a \in \FourSet\\
    \efvalue & a = \etvalue\\
    \etvalue & a = \efvalue\\
\end{cases}\qquad
\nabla^{\ISAlg_6} a \SymbDef
\begin{cases}
    \etvalue & a \in \SixSet\setminus\{\efvalue\}\\
    \efvalue & a = \efvalue\\
\end{cases}
\]
\noindent 
The subalgebras of $\ISAlg_6$ exhibited in Figure~{\rm\ref{fig:pp_algebra}} constitute the only subdirectly irreducible IS-algebras~{\rm\cite{cignoli1983}}.
\end{example}

We denote by $\ISVar$ the
variety of IS-algebras.
The following result lists some
equations
satisfied by IS-algebras,
which will be useful for
proving the results
in the next section.
\vspace{.5em}
\begin{lemma}
\label{lem:isident}
The following equations are satisfied by IS-algebras:
\begin{tasks}[style=enumerate](3)
    \task $x \lor \nabla \DMNeg x \approx \top$
    \task $x \land \DMNeg\nabla x \approx \bot$
    \task $\DMNeg\nabla(x \land \DMNeg x) \land \DMNeg x \approx \DMNeg\nabla x$
    \task $\nabla\nabla x \approx \nabla x$
    \task $\nabla\DMNeg\nabla x \approx \DMNeg\nabla x$
    \task $\DMNeg\nabla\DMNeg (x \land y) \approx \DMNeg\nabla\DMNeg x \land \DMNeg\nabla\DMNeg y$
\end{tasks}
\end{lemma}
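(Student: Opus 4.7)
My plan is to establish the six identities in a sequence that exploits earlier items as stepping stones. The cornerstone is a \emph{complement lemma}: for every $x$, the element $\DMNeg\nabla x$ is the Boolean complement of $\nabla x$ inside the underlying bounded distributive lattice. Indeed, \textbf{(IS4)} already supplies the meet identity $\nabla x \land \DMNeg\nabla x \approx \bot$, while the join identity $\nabla x \lor \DMNeg\nabla x \approx \top$ follows by applying $\DMNeg$ to \textbf{(IS4)}, using De Morgan, and invoking $\DMNeg\bot \approx \top$ (which holds in every De Morgan algebra, as a short consequence of \textbf{(DM2)} together with involutivity). Uniqueness of complements in distributive lattices will then do most of the heavy lifting in what follows.

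Items 2 and 1 are the easy warm-ups. For item 2, chain \textbf{(IS2)} and \textbf{(IS4)} as $x \land \DMNeg\nabla x \leq \nabla x \land \DMNeg\nabla x \approx \bot$; item 1 is then immediate by substituting $\DMNeg x$ for $x$ in item 2 and pushing $\DMNeg$ across using De Morgan. Items 5 and 4 use the complement lemma. For item 5, one direction is \textbf{(IS2)} applied to $\DMNeg\nabla x$; for the other, $\nabla\DMNeg\nabla x \land \nabla x \approx \nabla(\DMNeg\nabla x \land x) \approx \nabla\bot \approx \bot$ by \textbf{(IS3)}, item 2, and \textbf{(IS1)}, so the complement lemma forces $\nabla\DMNeg\nabla x \leq \DMNeg\nabla x$. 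Item 4 is analogous: after applying \textbf{(IS2)} to $\DMNeg\nabla x$, compute $\nabla\nabla x \land \DMNeg\nabla x \leq \nabla\nabla x \land \nabla\DMNeg\nabla x \approx \nabla(\nabla x \land \DMNeg\nabla x) \approx \nabla\bot \approx \bot$ by \textbf{(IS3)}, \textbf{(IS4)}, and \textbf{(IS1)}, and the complement lemma concludes.

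Items 3 and 6 are the finishing touches. For item 3, expand the left-hand side via \textbf{(IS3)} and De Morgan into $(\DMNeg\nabla x \lor \DMNeg\nabla\DMNeg x) \land \DMNeg x$, distribute over the join, and note that the summand $\DMNeg\nabla\DMNeg x \land \DMNeg x \approx \bot$ is just item 2 applied to $\DMNeg x$, while $\DMNeg\nabla x \land \DMNeg x \approx \DMNeg\nabla x$ because \textbf{(IS2)} yields $x \leq \nabla x$, whence $\DMNeg\nabla x \leq \DMNeg x$ by order-reversal of $\DMNeg$. I expect item 6 to be the main obstacle: a De Morgan manipulation reveals that it is equivalent to $\nabla$ preserving binary joins, and only monotonicity (an easy consequence of \textbf{(IS3)}) is immediate. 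For the converse inequality, my plan is to start from $(a \lor b) \land \DMNeg\nabla a \land \DMNeg\nabla b \approx \bot$, which follows from item 2 and distributivity, apply $\nabla$, and use \textbf{(IS3)}, \textbf{(IS1)}, and item 5 to strip the redundant $\nabla$'s and obtain $\nabla(a \lor b) \land \DMNeg\nabla a \land \DMNeg\nabla b \approx \bot$, at which point the complement lemma once more closes the argument.
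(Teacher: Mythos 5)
Your proof is correct, but it takes a genuinely different route from the paper's. The paper disposes of this lemma in two lines: items 1, 2, 4, 5 and 6 are outsourced to Lemma 3.2 of the cited work of Cantú--Figallo on IS-algebras, and only item 3 is sketched (via the De Morgan equations and $\nabla x \lor x \approx \nabla x$). You instead give a self-contained equational derivation of all six identities from \textbf{(IS1)}--\textbf{(IS4)} and the De Morgan axioms, organized around the observation that $\DMNeg\nabla x$ is the Boolean complement of $\nabla x$ in the underlying bounded distributive lattice, so that uniqueness of complements (equivalently, the implication ``$a \land c \approx \bot$ and $c$ complemented imply $a \leq \DMNeg c$,'' proved by distributing $a \land (c \lor \DMNeg c)$) turns each of items 4, 5 and 6 into a single meet-to-$\bot$ computation. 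I checked every step, including the reduction of item 6 to join-preservation of $\nabla$ (which implicitly uses surjectivity of $\DMNeg$) and the identity $(a\lor b)\land\DMNeg\nabla a\land\DMNeg\nabla b\approx\bot$; all are sound. What your approach buys is independence from the external reference and a uniform mechanism for the harder items; what it costs is length relative to the paper's citation. The only blemish is in item 4: the easy inequality $\nabla x \leq \nabla\nabla x$ comes from \textbf{(IS2)} applied to $\nabla x$, not to $\DMNeg\nabla x$ (the latter instance is what you correctly use inside the displayed chain); this is a one-word fix and does not affect the argument.
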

\begin{proof}
Equation 3 may be proved by using the usual De Morgan algebra equations together with $\nabla x \lor x \approx \nabla x$, an equation that is
easily derivable from \EqName{(IS2)}.
All other equations
follow from Lemma 3.2 in \cite{cantu:2020}.
\end{proof}


\vspace{-.5em}

Here, a \emph{\SetFmla{} logic (over $\Sigma$)}
is a consequence relation
$\vdash$ on $\LangSet{\Sigma}$
and a \emph{\SetSet{} logic (over $\Sigma$)}
is a generalized consequence relation 
$\sequent$ on $\LangSet{\Sigma}$ \cite{humberstone:connectives}.
The \emph{\SetFmla{} companion}
of a \SetSet{} logic $\sequent$
is the \SetFmla{} logic $\vdash_{\sequent}$ 
such that $\FmSetA \vdash_{\sequent} \FmB$
if, and only if, $\FmSetA \sequent \{\FmB\}$.
We will write
$\FmSetA \; \SymLogEquiv \; \FmSetB$
when $\FmSetA \; \sequent \; \FmSetB$
and $\FmSetB \; \sequent \; \FmSetA$.
The complement of a given \SetSet{} logic
$\sequent$ will be denoted
by $\notsequentx{}$.
We say that $\vdash'$
\emph{extends} $\vdash$ when
$\vdash' \; \supseteq \; \vdash$.
It is worth recalling that the collection of all extensions of
a given logic forms a complete lattice
under inclusion.
Given $\Sigma \subseteq \Sigma'$,
a logic~$\vdash'$ over $\Sigma'$ is a \emph{conservative
extension} of a logic~$\vdash$ over~$\Sigma$
when~$\vdash'$ extends~$\vdash$ and, for all $\FmSetA \cup \{\FmB\} \subseteq \LangSet{\Sigma}$, we have
$\FmSetA \vdash' \FmB$ if{f} $\FmSetA \vdash \FmB$.
These concepts may be extended to the $\SetSet$ framework in the obvious way.
We say, in addition, that a \SetFmla{} logic~$\vdash$~over $\Sigma$ 
\emph{has a disjunction} provided that 
$\FmSetA, \FmA\lor\FmB \;\vdash\; \FmC$ {iff} $\FmSetA,\FmA \;\vdash\; \FmC$ and $\FmSetA,\FmB\;\vdash\;\FmC$ (for $\lor$ a binary connective in $\Sigma$).

A \emph{(logical) $\Sigma$-matrix} $\DefMat$
is a structure $\langle \mathbf{A}, D \rangle$
where $\mathbf{A}$
is a $\Sigma$-algebra
and the members of $D \subseteq A$ are called \emph{designated values}.
We will write $\overline{D}$ to refer to $A{\setminus{}}D$.
{In case $D = A$,
we say that $\DefMat$ is \emph{trivial}.}
Provided that $\mathbf{A}$ has a lattice structure with underlying order~$\leq$, we will
often employ the notation $\Upset{a} \SymbDef \{b \in A \mid a \leq b\}$ when specifying sets of designated values.
{For instance, 
over $\ISSix{}$ we may consider the
set of designated values
$\Upset{\both} = \{\both, \tvalue, \etvalue\}$ (see~Figure~\ref{fig:pp_algebra}).}
The mappings in
$\Hom(\LangAlg{\Sigma}, \mathbf{A})$ are called \emph{$\DefMat$-valuations}. 
Every $\Sigma$-matrix
determines a \SetSet{} logic
$\sequent_\DefMat$
such that $\FmSetA \sequent{}_{\DefMat}\; \FmSetB$
if{f} $h(\FmSetA) \cap \overline{D} \neq \varnothing$
or $h(\FmSetB) \cap D \neq \varnothing$
as well as a \SetFmla{} logic
$\vdash_\DefMat$
with $\FmSetA \vdash_\DefMat \FmB$ iff $\FmSetA \sequent_\DefMat \{\FmB\}$
(notice that $\vdash_{\DefMat}$ 
is the \SetFmla{} companion of $\sequent_\DefMat$).
Given a \SetSet{} logic~$\sequent$ (resp.\ a \SetFmla{} logic~$\vdash$), 
if $\sequent\;\subseteq\;\sequent_{\DefMat}$ (resp.\ $\vdash \;\subseteq\; \vdash_{\DefMat}$), we shall say that~$\DefMat$ \emph{is a model of}
$\sequent$ (resp.\ $\vdash$),
and if the converse also holds
we shall say that~$\DefMat$ \emph{characterises}
$\sequent$ (resp.\ $\vdash$).
The $\SetSet{}$ (resp.\ $\SetFmla$) logic determined by a class $\mathcal{M}$
of $\Sigma$-matrices is given
by $\bigcap \{\sequent_\DefMat \mid \DefMat \in \mathcal{M}\}$
(resp.\ $\bigcap \{\vdash_\DefMat \mid \DefMat \in \mathcal{M}\}$).

\begin{example}
\label{ex:fde}
The $\DMSig$-matrix $\langle \FDEAlg, \Upset{\both} \rangle$ determines the logic known as
the 4-valued Dunn-Belnap logic, or First-Degree Entailment (FDE) {\rm\cite{belnap1977}},
which we hereby denote by $\FDELogic$.
Extensions of $\FDELogic$ 
are known as \emph{super-Belnap logics}  {\rm\cite{rivieccio2012}}.
\end{example}

\begin{example}
Classical Logic, hereby denoted by $\CL$, is determined by the
$\DMSig$-matrix $\langle \BoolAlg, \{\tvalue\} \rangle$.
\end{example}

Every $\Sigma$-variety
$\mathsf{K}$
such that each $\mathbf{A} \in \mathsf{K}$ has a bounded lattice reduct with greatest element $\top^\mathbf{A}$ and least element $\bot^\mathbf{A}$
induces
{a finitary \SetSet{} \emph{order-preserving logic}
$\sequent_\mathsf{K}^{\leq}$
according to which
$\FmSetB$ follows from
$\FmSetA$ if{f} 
there
exist finite 
$\FmSetA'\subseteq\FmSetA$ and
$\FmSetB'\subseteq \FmSetB$
such that the equation
$\bigwedge\FmSetA'\approx \bigwedge\FmSetA' \land \bigvee \FmSetB'$
is valid in~$\mathsf{K}$ (as usual, we assume $\bigwedge \emptyset=\top^\mathbf{A}$ and $\bigvee \emptyset=\bot^\mathbf{A}$).
}
%
The \SetFmla{} companion of $\sequent_\mathsf{K}^{\leq}$
is usually referred to as the \SetFmla{} \emph{order-preserving logic induced by~$\mathsf{K}$}, which we denote by
$\vdash_\mathsf{K}^{\leq}$.
Notice that, according to this logic, 
$\FmSetA \vdash_\mathsf{K}^{\leq} \FmB$ if, and only if,
(i)
$\FmSetA=\varnothing$
and $\FmB \approx \top$ is valid in~$\mathsf{K}$ 
or
(ii)
there
are $\FmA_1,\ldots,\FmA_n \subseteq \FmSetA$ ($n \geq 1$)
such that the equation
$\bigwedge_i \FmA_i \approx \bigwedge_i \FmA_i \land \FmB$
is valid in~$\mathsf{K}$.
Furthermore, we associate to $\mathsf{K}$
the \emph{$1$-assertional logics}
$\sequent^\top_\mathsf{K}$ and $\OneAssert_{\mathsf{K}}$ corresponding respectively to the \SetSet{} and \SetFmla{} logics determined by the class of $\Sigma$-matrices $\{\langle \mathbf{A}, \{\top^\mathbf{A}\} \rangle \mid \mathbf{A} \in \mathsf{K}\}$ {(notice that $\OneAssert_{\mathsf{K}}$ is the \SetFmla{} companion of $\sequent^\top_\mathsf{K}$)}.


A \emph{lattice filter}
of a $\bigwedge$-semilattice~$\mathbf{A}$ with a top element~$\top$ is a subset $D \subseteq A$ with $\top^\mathbf{A} \in D$
and closed under~$\land^\mathbf{A}$;
moreover,
$D$ is a \emph{proper}
lattice filter of $\mathbf{A}$ when
$D \neq A$.
If $\mathbf{A}$ is a $\bigvee$-semilattice,
a \emph{prime filter} of $\mathbf{A}$
is a proper lattice filter $D$ of $\mathbf{A}$
such that $a \lor b \in D$ iff $a \in D$
or $b \in D$, for all $a,b \in A$.
{In case every $\mathbf{A} \in \mathsf{K}$ has a bounded 
distributive 
lattice reduct,
as it happens with all varieties treated in the present work, the order-preserving logic 
induced by~$\mathsf{K}$}
coincides with the logic determined by the
class of matrices $\{\langle \mathbf{A}, D \rangle \mid \mathbf{A} \in \mathsf{K}, D \subseteq A \text{ is a non-empty lattice filter of } \mathbf{A}\}$.

Based on \cite{shoesmith_smiley:1978, marcelinowollic},
we define a \emph{symmetrical} (\emph{Hilbert-style}) \emph{calculus} $\CalcVar$ (or \emph{\SetSet{} calculus}, for short)
as a collection
of pairs $(\FmSetA,\FmSetB) \in \powerset{\LangSet{\Sigma}}\times\powerset{\LangSet{\Sigma}}$, denoted by $\inferx[]{\FmSetA}{\FmSetB}$ and called (\emph{symmetrical}) \emph{inference rules}, where
$\FmSetA$ is the
\emph{antecedent} and $\FmSetB$
is the \emph{succedent} of the said rule.
We will adopt the convention of
omitting curly braces when writing sets of formulas
and leaving a blank space instead of writing $\varnothing$
when presenting inference rules and statements
involving (generalized) consequence relations.
We proceed to define what constitutes a proof
in such calculi.

A \emph{bounded rooted tree} $\DefTree$ is a poset
$\langle \NodeSet(\DefTree), \TreeRel^\DefTree \rangle$
with a single minimal element $\Root{\DefTree}$, 
the \emph{root} of $\DefTree$,
such that, for each \emph{node} $\DefNode \in \NodeSet(\DefTree)$, the set $\{\DefNode' \in \NodeSet(\DefTree) \mid \DefNode' \TreeRel^\DefTree \DefNode \}$ of \emph{ancestors} of $\DefNode$ (or the \emph{branch} up to $\DefNode$) is
well-ordered under $\TreeRel^\DefTree$,
and every branch of $\DefTree$ has
a maximal element (a \emph{leaf} of $\DefTree$).
We may assign a label $\Label{\DefTree}(\DefNode) \in \powerset{\LangSet{\Sigma}} \cup \{\Disc\}$ to each node $\DefNode$ of $\DefTree$, in which case $\DefTree$ is said to
be \emph{labelled}. 
Given $\FmSetB \subseteq \LangSet{\Sigma}$, a leaf $\DefNode$ is \emph{$\FmSetB$-closed} in $\DefTree$ when
$\Label{\DefTree}(\DefNode) \;=\; \Disc$ or $\Label{\DefTree}(\DefNode) \cap \FmSetB \neq \varnothing$. The tree $\DefTree$ itself is
\emph{$\FmSetB$-closed} when all of its leaves are
$\FmSetB$-closed. The immediate successors of
a node $\DefNode$ with respect to $\TreeRel^\DefTree$ are called the \emph{children}
of $\DefNode$ in $\DefTree$.

Let $\mathsf{R}$ be a symmetrical calculus. 
An \emph{$\mathsf{R}$-derivation} is a labelled bounded rooted tree 
such that for every non-leaf node $\DefNode$ of $\DefTree$
there exists a rule of inference $\mathsf{r} = \inferx[]{\FmSetC}{\FmSetD} \in \mathsf{R}$ and a substitution $\sigma$
such that $\sigma(\FmSetC) \subseteq \Label{\DefTree}(\DefNode)$, and the set of the children of $\DefNode$ is 
either (i) 
$\{\DefNode^\Fm \mid \Fm \in \sigma(\FmSetD)\}$, in case $\FmSetD \neq \varnothing$, where $\DefNode^\Fm$ is a node labelled with $\Label{\DefTree}(\DefNode) \cup \{\Fm\}$, 
or
(ii) a singleton $\{\DefNode^\Disc\}$
    with $\Label{\DefTree}(\DefNode)\;=\;\Disc$,
    in case $\FmSetD\;=\;\varnothing$.
We say that
$\FmSetA\; \sequent_{\CalcVar}\; \FmSetB$
whenever there is a $\FmSetB$-closed derivation $\DefTree$ such that $\FmSetA\;\supseteq\;\Root{\DefTree}$; 
such a tree consists in a \emph{proof}
that $\FmSetB$ follows from~$\FmSetA$ in~$\mathsf{R}$.
As a matter of simplification when drawing such trees, we usually avoid copying the formulas inherited from the parent nodes (see Example~\ref{ex:fdeax} below).
The relation $\sequent_{\CalcVar}$ so defined is
a \SetSet{} logic
and, when $\sequent_{\CalcVar} = \sequent_{\DefMat}$,
we say that $\CalcVar$
\emph{axiomatizes} $\DefMat$.
A rule $\inferx[]{\FmSetA}{\FmSetB}$ is \emph{sound} with respect to $\DefMat$ when $\FmSetA \;\sequent_\DefMat\;\FmSetB$.
It should be pointed out
that such deductive formalism
generalises the conventional ($\SetFmla{}$) Hilbert-style calculi:
the latter corresponds to symmetrical calculi
whose rules have, each, a finite antecedent
and a singleton as succedent.
Given $\Lambda \subseteq \LangSet{\Sigma}$,
we write $\FmSetA \sequent_{\mathsf{R}}^\Lambda \FmSetB$
whenever there is a proof of $\FmSetB$ from $\FmSetA$
using only formulas in $\Lambda$.
We say that
$\mathsf{R}$ is \emph{$\FmSetAnalytic$-analytic}
when, for all $\FmSetA,\FmSetB \subseteq \LangSet{\Sigma}$,
whenever $\FmSetA\;\sequent_\mathsf{R}\;\FmSetB$,
we have
$\FmSetA \sequent_\mathsf{R}^{\Upsilon^\FmSetAnalytic} \FmSetB$,
with $\Upsilon^\FmSetAnalytic := \mathsf{sub}(\FmSetA \cup \FmSetB) \cup \{ \sigma(\FmA) \mid \FmA \in \FmSetAnalytic\mbox{ and } \sigma : P \to \mathsf{sub}(\FmSetA \cup \FmSetB)\}$,
which we shall dub the \emph{generalized subformulas} of $(\FmSetA, \FmSetB)$.
Intuitively, it means that
a proof in $\mathsf{R}$
that $\FmSetB$ follows from $\FmSetA$
may only use subformulas of $\FmSetA\cup\FmSetB$
or substitution instances of the
formulas in~$\FmSetAnalytic$ built with those same subformulas.

A general method is introduced in \cite{marcelinowollic, marcelino:2019} for obtaining analytic calculi (in the sense of analyticity introduced in the above paragraph)
for logics given by 
a $\Sigma$-matrix $\langle \mathbf{A}, D\rangle$ whenever a certain expressiveness requirement (called `monadicity' in \cite{shoesmith_smiley:1978}) is met: for every $a,b \in A$, there is a single-variable formula
$\mathrm{S}$ (a so-called \emph{separator}) such that $\mathrm{S}^\mathbf{A}(a) \in D$ and $\mathrm{S}^\mathbf{A}(b) \not\in D$
or vice-versa. The following example illustrates a symmetrical
calculus for $\FDELogic$ generated by this method, as well
as some proofs in this calculus.

\begin{example}
\label{ex:fdeax}
The matrix $\langle \mathbf{DM}_4, {\uparrow}\both \rangle$ fulfills the above expressiveness requirement, with the following set of separators: $\mathcal{S} \SymbDef \{p, \dmneg p\}$. 
We may therefore apply the method introduced in {\rm\cite{marcelino:2019}} to obtain for $\mathcal{B}$ the following $\mathcal{S}$-analytic
axiomatization we call~$\mathsf{R}_\mathcal{B}$:
\vspace{-.5em}
\begin{align*}
\inferx[\mathsf{r}_1]
{}
{\top}
&&
\inferx[\mathsf{r}_2]
{\dmneg \top}
{}
&&
\inferx[\mathsf{r}_3]
{}
{\dmneg \bot}
&&
\inferx[\mathsf{r}_4]
{\bot}
{}
&&
\inferx[\mathsf{r}_5]
{p}
{\dmneg \dmneg p}
&&
\inferx[\mathsf{r}_6]
{\dmneg \dmneg p}
{p}
\end{align*}
\vspace{-1.5em}
\begin{align*}
\inferx[\mathsf{r}_7]
{p \land q}
{p}
&&
\inferx[\mathsf{r}_8]
{p \land q}
{q}
&&
\inferx[\mathsf{r}_9]
{p, q}
{p \land q}
&&
\inferx[\mathsf{r}_{10}]
{\dmneg p}
{\dmneg(p \land q)}
&&
\inferx[\mathsf{r}_{11}]
{\dmneg q}
{\dmneg(p \land q)}
&&
\inferx[\mathsf{r}_{12}]
{\dmneg (p \land q)}
{\dmneg p, \dmneg q}
\end{align*}
\begin{align*}
\inferx[\mathsf{r}_{13}]
{p}
{p \lor q}
&&
\inferx[\mathsf{r}_{14}]
{q}
{p \lor q}
&&
\inferx[\mathsf{r}_{15}]
{p \lor q}
{p, q}
&&
\inferx[\mathsf{r}_{16}]
{\dmneg p, \dmneg q}
{\dmneg(p \lor q)}
&&
\inferx[\mathsf{r}_{17}]
{\dmneg(p \lor q)}
{\dmneg p}
&&
\inferx[\mathsf{r}_{18}]
{\dmneg (p \lor q)}
{\dmneg q}\\
\end{align*}

\vspace{-1em}
\noindent Figure {\rm \ref{fig:derivations}} illustrates some derivations in $\mathsf{R}_\mathcal{B}$.

\vspace{-.5em}
\begin{figure}[H]
    \centering
    \scalebox{0.9}{
    \begin{tikzpicture}[node distance=1.5cm]
    \node (0)                     {$\dmneg (p \land q)$};
    \node (11) [below left of=0]  {$\dmneg p$};
    \node (12) [below right of=0] {$\dmneg q$};
    \node (21) [below of=11]      {$\dmneg p \lor \dmneg q$};
    \node (22) [below of=12]      {$\dmneg p \lor \dmneg q$};
    \draw (0)  -- (11) node[midway,above left] {$\mathsf{r}_{12}$};
    \draw (0)  -- (12);
    \draw (11) -- (21) node[midway,left] {$\mathsf{r}_{13}$};
    \draw (12) -- (22) node[midway,right] {$\mathsf{r}_{14}$};
    \end{tikzpicture}
    \begin{tikzpicture}[node distance=1.5cm]
    \node (0)                     {$\dmneg p \lor \dmneg q$};
    \node (11) [below left of=0]  {$\dmneg p$};
    \node (12) [below right of=0] {$\dmneg q$};
    \node (21) [below of=11]      {$\dmneg (p \land q)$};
    \node (22) [below of=12]      {$\dmneg (p \land q)$};
    \draw (0)  -- (11) node[midway,above left] {$\mathsf{r}_{15}$};
    \draw (0)  -- (12);
    \draw (11) -- (21) node[midway,left] {$\mathsf{r}_{10}$};
    \draw (12) -- (22) node[midway,right] {$\mathsf{r}_{11}$};
    \end{tikzpicture}
    \qquad \qquad
    \begin{tikzpicture}[node distance=1.5cm]
    \node (0)                     {$p \lor \bot$};
    \node (11) [below left of=0]  {$p$};
    \node (12) [below right of=0] {$\bot$};
    \node (22) [below of=12]      {*};
    \draw (0)  -- (11) node[midway,above left] {$\mathsf{r}_{15}$};
    \draw (0)  -- (12);
    \draw (12) -- (22) node[midway,right] {$\mathsf{r}_{4}$};
    \node (11) [right of=12]               {$p \lor \bot$};
    \node (00) [above=0.5\distance of 11] {$p, q$};
    \node (22) [below of=11]               {};
    \draw (00) -- (11) node[midway,left] {$\mathsf{r}_{13}$};
    \end{tikzpicture}}
    \caption{Proofs in $\mathsf{R}_\FDELogic$ witnessing that $\dmneg(p \land q) \;\SymLogEquiv_{\FDELogic}\; \dmneg p \lor \dmneg q$ and $p \lor \bot \; \SymLogEquiv_{\FDELogic} \; p, q$.}
    \label{fig:derivations}
\end{figure}
\end{example}


\sloppy
Let $\Sigma$ be any signature containing a unary
connective $\DMNeg$. A $\SetSet{}$ logic $\sequent$ over $\Sigma$ is said to be
\mbox{\emph{$\DMNeg$-paraconsistent}} when
we have $\DefProp,\DMNeg\DefProp \;\notsequent\; \DeffProp$,
and \emph{$\DMNeg$-paracomplete} when
we have $\DeffProp\;\notsequent\;\DefProp,\DMNeg\DefProp$, with $\DefProp,\DeffProp \in P$.
Moreover, $\sequent$
is \emph{$\DMNeg$-gently explosive} in case there is a collection $\bigcirc(\DefProp) \subseteq \LangSet{\Sigma}$ of formulas
on a single variable such that, 
for some $\Fm \in \LangSet{\Sigma}$, we have
$\bigcirc(\Fm), \Fm \;\notsequent\; \varnothing$
and
$\bigcirc(\Fm), \DMNeg\Fm \;\notsequent\; \varnothing$,
and, for all $\Fmm \in \LangSet{\Sigma}$, we have
$\bigcirc(\Fmm),\Fmm,\DMNeg\Fmm \; \sequent \varnothing$.
Dually, $\sequent$ is \emph{$\DMNeg$-gently implosive} in case
there is a collection of formulas $\CompSet(\DefProp) \subseteq \LangSet{\Sigma}$ on a single
variable such that, for some $\Fm \in \LangSet{\Sigma}$, we have 
$\varnothing\notsequent\;\Fm,\CompSet(\Fm)$ and
$\varnothing\notsequent\;\DMNeg\Fm,\CompSet(\Fm)$,
and, for all $\Fmm \in \LangSet{\Sigma}$, we have
$\sequent\;\DMNeg\Fmm,\Fmm,\CompSet(\Fmm)$.
A \SetSet{} logic is \emph{$\DMNeg$-paradefinite} when
it is both $\DMNeg$-paraconsistent and $\DMNeg$-paracomplete; is a \emph{logic of formal inconsistency (\textbf{LFI})} when it is $\DMNeg$-paraconsistent yet $\DMNeg$-gently explosive; and is a \emph{logic of formal undeterminedness (\textbf{LFU})} when it is
$\DMNeg$-paracomplete yet $\DMNeg$-gently
implosive.
Furthermore, 
if $\sequent_1$ and $\sequent_2$
are logics over $\Sigma_1\supseteq\Sigma$
and $\Sigma_2\supseteq\Sigma$ respectively,
we say that $\sequent_1$ is a \emph{\textbf{C}-system 
based on} $\sequent_2$ with respect to $\DMNeg$ (or simply a \emph{\textbf{C}-system}) when it is an \textbf{LFI}
that
agrees with $\sequent_2$
on statements involving formulas without
$\DMNeg$ (that is, $\FmSetA \sequent_1 \FmSetB$ iff $\FmSetA \sequent_2 \FmSetB$ for all sets $\FmSetA,\FmSetB$ of formulas without $\DMNeg$), and
$\bigcirc(\DefProp) = \{\cons\DefProp\}$,
for $\cons$ a composite \emph{consistency connective} in the language of $\sequent_1$. We may dually define the notions
of \emph{\textbf{D}-system} 
{
and of \emph{determinedness connective} \cite{jmarcos2005}.  
It is worth pointing out that in the present paper we will have $\CompSet(\DefProp) = \{\DMNeg\cons\DefProp\}$.%
}

\begin{example}
By exploiting
the fact that $\neither,\both\in\FourSet$ are
fixpoints of $\DMNeg^{\FDEAlg}$,
one may easily notice that
$\FDELogic$ is $\DMNeg$-paraconsistent and $\DMNeg$-paracomplete (thus $\DMNeg$-paradefinite).
\end{example}

\section{Perfect paradefinite algebras and their logics}\label{sec:paradefinite-stone-algebras}

\subsection{Involutive Stone and PP-algebras}

We propose in this section 
to enrich De Morgan algebras by the addition of a perfection
operator $\cons$, which will allow us to recover
the classical properties of $\DMNeg$-consistency and $\DMNeg$-determinedness. In the sequel,
we will
prove that the variety
of the algebras thus obtained is term-equivalent to the variety of IS-algebras.
\vspace{.2em}

\begin{definition}\label{def:ppalgebra}
Given a $\DMoSig$-algebra whose
$\DMSig$-reduct is a De Morgan
algebra, we say that it
constitutes a \emph{perfect paradefinite algebra} (\emph{PP-algebra}) if it
satisfies the equations:
\begin{table}[H]
\begin{tabular}{@{}llll@{}}
    \textbf{(PP1)} $\cons\cons x \approx \top$
    \hspace{5mm}
    &
    \textbf{(PP2)} $\circ x \approx \circ\DMNeg x$
    \hspace{5mm}
    & 
    \textbf{(PP3)} $\cons \top \approx \top$
    \hspace{5mm}
    &
    \textbf{(PP4)} $ x \land \DMNeg x \land \cons x \approx \bot$ 
\end{tabular}
\begin{tabular}{@{}llll@{}}
    \textbf{(PP5)} $\circ(x \land y) \approx (\circ x \lor \circ y) \land (\circ x \lor \DMNeg y) \land (\circ y \lor \DMNeg x)$
\end{tabular}
\end{table}
\end{definition}
\vspace{-1em}

\begin{example}
An example of PP-algebra is $\PPAlg_6 \SymbDef \langle \SixSet, \cdot^{\PPAlg_6} \rangle$,
the $\DMoSig$-algebra
defined as $\ISAlg_6$
in Example {\rm\ref{ex:issix}},
differing only in that,
instead of containing an interpretation for
$\nabla$, it interprets $\cons$
as follows:
\[
\cons^{\PPAlg_6} a \SymbDef
\begin{cases}
    \efvalue & a \in \SixSet\setminus\{\efvalue,\etvalue\}\\
    \etvalue & a \in \{\efvalue,\etvalue\}\\
\end{cases}
\]
\noindent 
Other examples are the algebras $\PPAlg_i$, for $2 \leq i \leq 5$, the 
subalgebras
of $\PPAlg_6$
having, respectively, the same lattice structures of the algebras $\ISAlg_i$ exhibited in Figure~{\rm\ref{fig:pp_algebra}}.
\end{example}

As it occurs with IS-algebras, in the language of PP-algebras
we may  define, 
by setting $\PPComp x \;\SymbDef\; \cons x \land \DMNeg x$, 
a pseudo-complement satisfying the Stone equation.
We denote by $\PPVar$
the variety of
PP-algebras.
The following result
illustrates some useful
equations 
satisfied by the members
of $\PPVar$.
\vspace{.2em}

\begin{lemma}
\label{lem:psident}
Every PP-algebra satisfies:
\begin{tasks}[style=enumerate](3)
    \task $\DMNeg\cons x \lor (x \lor \DMNeg x) \approx \top$
    \task $\cons x \land \DMNeg\cons x \approx \bot$
    \task $\cons x \approx \cons x \land (x \lor \DMNeg x)$
\end{tasks}
\end{lemma}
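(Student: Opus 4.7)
The plan is to prove the three identities in the order (2), (1), (3), since (3) will follow easily once the first two are in hand, and (2) provides a quick warm-up that exhibits the interaction between \textbf{(PP1)} and \textbf{(PP4)}.

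For identity (2), I would simply substitute $\cons x$ for $x$ in \textbf{(PP4)}, obtaining $\cons x \land \DMNeg\cons x \land \cons\cons x \approx \bot$. By \textbf{(PP1)} the third conjunct equals $\top$, which absorbs into the meet, yielding $\cons x \land \DMNeg\cons x \approx \bot$.

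For identity (1), I would rewrite the left-hand side using \textbf{(DM1)} and \textbf{(DM2)} in reverse:
\[
\DMNeg\cons x \lor x \lor \DMNeg x \;=\; \DMNeg\cons x \lor \DMNeg\DMNeg x \lor \DMNeg x \;=\; \DMNeg(\cons x \land \DMNeg x \land x).
\]
By \textbf{(PP4)} the argument of the outer negation is $\bot$, so the expression equals $\DMNeg\bot$. The argument then concludes by recalling that $\DMNeg\bot \approx \top$ holds in every De Morgan algebra — a standard consequence of \textbf{(DM1)} and \textbf{(DM2)}, since $\DMNeg(\top \land x)\approx\DMNeg x$ forces $\DMNeg\top$ to lie below every element of the $\DMNeg$-image, and involutivity makes that image the whole carrier.

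For identity (3), I would meet both sides of (1) with $\cons x$, distribute, and invoke (2):
\[
\cons x \;=\; \cons x \land \top \;=\; \cons x \land (\DMNeg\cons x \lor (x \lor \DMNeg x)) \;=\; (\cons x \land \DMNeg\cons x) \lor (\cons x \land (x \lor \DMNeg x)),
\]
and the first disjunct vanishes by (2), leaving $\cons x \approx \cons x \land (x \lor \DMNeg x)$.

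There is no genuine obstacle in this proof — every step is a short equational manipulation using only \textbf{(PP1)}, \textbf{(PP4)}, the De Morgan laws, and distributivity of the underlying bounded lattice. The only point that deserves a brief mention is the derivation of $\DMNeg\bot \approx \top$ from the De Morgan laws alone, which is standard but easy to overlook.
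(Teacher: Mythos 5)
Your proof is correct and follows essentially the same route as the paper's: item (2) by substituting $\cons x$ into \textbf{(PP4)} and invoking \textbf{(PP1)}, item (1) as a direct De Morgan dualization of \textbf{(PP4)}, and item (3) from (1) and (2) via distributivity. You merely spell out the details (including the standard fact $\DMNeg\bot\approx\top$) that the paper leaves implicit.
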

\begin{proof}
Notice that 1 is a straightforward consequence
of $\EqName{(PP4)}$,
and 2 is a consequence of
$\EqName{(PP4)}$ using $\cons x$
in place of $x$ and invoking $\EqName{(PP1)}$.
Finally, 3 may be easily proved
using 1 and 2.
\end{proof}
\vspace{-.5em}
Given $\FmA \in \LangSet{\DMnSig}$
(resp.\ $\FmA \in \LangSet{\DMoSig}$),
let $\FmA^{\cons} \in \LangSet{\DMoSig}$
(resp.\ $\FmA^{\nabla} \in \LangSet{\DMnSig}$)
be the result of 
applying the definition
of $\cons$ (resp.\ of $\nabla$)
given below, in
Theorem \ref{the:interpcons} (resp.\ Theorem \ref{the:interpnabla}),
over $\FmA$.
Extend this notion
to sets of formulas
in the usual way.
The subsequent results
establish the term-equivalence between the varieties of involutive Stone algebras and
of perfect paradefinite algebras.
We first provide ways of constructing
PP-algebras from IS-algebras, and vice-versa.

\begin{theorem}
\label{the:interpcons}
\sloppy
Let
$\mathbf{A} \in \ISVar$. 
Then the $\DMoSig$-algebra $\mathbf{A}^\cons$
having the same $\DMSig$-reduct of $\mathbf{A}$ and with
$\cons^{\mathbf{A}^\cons}$ being the operation induced by $\DMNeg\nabla(x \land \DMNeg x)$ on $\mathbf{A}$
is a PP-algebra.
\end{theorem}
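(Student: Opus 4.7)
The plan is to verify that $\mathbf{A}^\cons$ satisfies each of the five axioms (PP1)--(PP5); the De Morgan equations transfer automatically, since $\mathbf{A}^\cons$ shares its $\DMSig$-reduct with $\mathbf{A}$. Writing $\Delta u := \DMNeg\nabla u$, so that $\cons^{\mathbf{A}^\cons}x = \Delta(x\land\DMNeg x)$, I would first dispatch the four simpler equations by direct algebraic manipulation using the IS-axioms and Lemma~\ref{lem:isident}. For (PP1), applying (IS4) with $x$ replaced by $x\land\DMNeg x$ yields $\cons x\land\DMNeg\cons x\approx\bot$, whence $\cons\cons x\approx\Delta\bot\approx\DMNeg\bot\approx\top$ by (IS1). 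For (PP2), apply (DM1) inside $\Delta$. For (PP3), note that $\top\land\DMNeg\top\approx\bot$, so $\cons\top\approx\Delta\bot\approx\top$. For (PP4), invoke Lemma~\ref{lem:isident}(2) with $x$ replaced by $x\land\DMNeg x$.

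The substantive work lies in (PP5). I would first establish two auxiliary distributivity laws for $\Delta$: from (IS3), negating both sides yields $\Delta(u\land v)\approx\Delta u\lor\Delta v$; and from Lemma~\ref{lem:isident}(6), using (DM1) and (DM2), one obtains $\Delta(u\lor v)\approx\Delta u\land\Delta v$. With these in hand, expanding $\DMNeg(x\land y)$ via (DM2) and invoking lattice distributivity gives
\[(x\land y)\land\DMNeg(x\land y)\;\approx\;(y\land(x\land\DMNeg x))\lor(x\land(y\land\DMNeg y)).\]
Successive application of the two distributivity laws for $\Delta$, together with the definition of $\cons$, then yields
\[\cons(x\land y)\;\approx\;(\Delta y\lor\cons x)\land(\Delta x\lor\cons y).\]

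To finish (PP5), I would substitute $\Delta x\approx\cons x\land\DMNeg x$ and $\Delta y\approx\cons y\land\DMNeg y$ (both instances of Lemma~\ref{lem:isident}(3)) and apply lattice distributivity to rewrite the two factors as $(\cons x\lor\cons y)\land(\cons x\lor\DMNeg y)$ and $(\cons x\lor\cons y)\land(\cons y\lor\DMNeg x)$, respectively. After absorbing the repeated factor $\cons x\lor\cons y$, the resulting expression is precisely the right-hand side of (PP5). The only real obstacle is identifying this chain of rewrites; once the two distributivity laws for $\Delta$ and the identity $\Delta x\approx\cons x\land\DMNeg x$ are in place, the remaining manipulation is essentially mechanical.
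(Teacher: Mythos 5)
Your proposal is correct and follows essentially the same route as the paper's proof: a direct equational verification of \textbf{(PP1)}--\textbf{(PP5)} using the IS-axioms and Lemma~\ref{lem:isident}. Your handling of \textbf{(PP1)} and \textbf{(PP4)} (instantiating \textbf{(IS4)} and Lemma~\ref{lem:isident}(2) at $x\land\DMNeg x$) and your organization of \textbf{(PP5)} around the two distributivity laws for $\DMNeg\nabla$ are slightly more streamlined than the paper's rewrite chains, but the substance is the same.
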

\vspace{-.8em}
\begin{proof}
We must check that $\mathbf{A}^{\cons}$
satisfies each of the characteristic equations of PP-algebras:
\begin{description}
    \item[(PP1)]
    $\cons\cons x \approx_{def} \DMNeg\nabla((\DMNeg\nabla(x \land \DMNeg x)) \land \DMNeg(\DMNeg\nabla(x \land \DMNeg x)))
                  \approx_{\EqName{(IS3)}} \DMNeg\nabla\DMNeg\nabla(x \land \DMNeg x) \lor \DMNeg\nabla\DMNeg\DMNeg\nabla(x \land \DMNeg x)
                  \approx_{2.5.5} \DMNeg\DMNeg\nabla(x \land \DMNeg x) \lor \DMNeg\nabla\DMNeg\DMNeg\nabla(x \land \DMNeg x)
                  \approx_{\EqName{(DM1)}} \nabla(x \land \DMNeg x) \lor \DMNeg\nabla\nabla(x \land \DMNeg x)
                  \approx_{2.5.4} \nabla(x \land \DMNeg x) \lor \DMNeg\nabla(x \land \DMNeg x)
                  \approx_{\EqName{(IS4)}} \top$.
    \item[(PP2)]
    $\cons x \approx_{def} \DMNeg\nabla(x \land \DMNeg x)
             \approx_{\EqName{(DM1)}} \DMNeg\nabla(\DMNeg\DMNeg x \land \DMNeg x)
             \approx_{def} \cons \DMNeg x$ .
    \item[(PP3)]
    $\cons \top \approx_{def} \DMNeg\nabla(\top \land \DMNeg \top)
                \approx \DMNeg\nabla(\top \land \bot)
                \approx \DMNeg\nabla\bot
                \approx_{\EqName{(IS1)}} \DMNeg\bot
                \approx \top$.
    \item[(PP4)]
    $\cons x \land (\DMNeg x \land x) \approx_{def} \DMNeg\nabla(x \land \DMNeg x) \land (\DMNeg x \land x)
                                     \approx (\DMNeg\nabla(x \land \DMNeg x) \land \DMNeg x) \land x
                                     \approx_{2.5.3} \DMNeg\nabla x \land x
                                     \approx_{2.5.2} \bot$.
    \item[(PP5)]
    $\cons(x \land y) \approx_{def} \DMNeg\nabla((x \land y) \land \DMNeg(x \land y))
                      \approx_{\EqName{(IS3)}} \DMNeg\nabla(x \land y) \lor \DMNeg\nabla\DMNeg(x \land y)
                      \approx_{\EqName{(IS3)}} (\DMNeg\nabla x \lor \DMNeg\nabla y) \lor \DMNeg\nabla\DMNeg(x \land y)
                      \approx_{2.5.6} (\DMNeg\nabla x \lor \DMNeg\nabla y) \lor (\DMNeg\nabla\DMNeg x \land \DMNeg\nabla\DMNeg y)
                      \approx (\DMNeg\nabla x \lor \DMNeg\nabla y \lor \DMNeg\nabla\DMNeg x) \land (\DMNeg\nabla x \lor \DMNeg\nabla y \lor \DMNeg\nabla\DMNeg y)
                      \approx_{2.5.3} (\DMNeg\nabla x \lor (\DMNeg\nabla(y \land \DMNeg y) \land \DMNeg y) \lor \DMNeg\nabla\DMNeg x) \land (\DMNeg\nabla y \lor (\DMNeg\nabla(x \land \DMNeg x) \land \DMNeg x) \lor \DMNeg\nabla\DMNeg y)
                      \approx_{\EqName{(IS3)}} (\DMNeg\nabla (x \land \DMNeg x) \lor (\DMNeg\nabla(y \land \DMNeg y) \land \DMNeg y)) \land (\DMNeg\nabla (y \land \DMNeg y) \lor (\DMNeg\nabla(x \land \DMNeg x) \land \DMNeg x))
                      \approx_{def} (\cons x \lor (\cons y \land \neg y)) \land (\cons y \lor (\cons x \land \neg x))
                      \approx (\cons x \lor \cons y) \land (\cons x \lor \DMNeg y) \land (\cons y \lor \DMNeg x)$.
\qedhere

\end{description}
\end{proof}
\vspace{-.5em}

\begin{theorem}
\label{the:interpnabla}
Let $\mathbf{A} \in \PPVar$.
Then the $\DMnSig$-algebra $\mathbf{A}^\nabla$
having the same $\DMSig$-reduct of $\mathbf{A}$ and with
$\nabla^{\DMISExtend{\mathbf{A}}}$
being the operation induced by
$\DMNeg\cons x \lor x$ on $\mathbf{A}$
is an IS-algebra.
\end{theorem}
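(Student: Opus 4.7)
The plan is to verify the four defining equations of IS-algebras, namely (IS1)--(IS4), for $\mathbf{A}^\nabla$ by substituting the definition $\nabla x \coloneqq \DMNeg\cons x \lor x$ and reducing each equation, via the bounded distributive lattice laws and De Morgan equations (DM1), (DM2), to identities that follow from (PP1)--(PP5) or from Lemma~\ref{lem:psident}. Since $\mathbf{A}^\nabla$ has the same $\DMSig$-reduct as $\mathbf{A}$, all standard lattice and De Morgan manipulations are available.

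For (IS1), I would compute $\nabla\bot = \DMNeg\cons\bot \lor \bot = \DMNeg\cons\bot$; using (PP2) with $x\coloneqq\bot$ we get $\cons\bot \approx \cons\DMNeg\bot \approx \cons\top$, and (PP3) then yields $\cons\bot\approx\top$, so $\nabla\bot\approx\DMNeg\top\approx\bot$. For (IS2), the identity $x \land (\DMNeg\cons x \lor x)\approx x$ is immediate by absorption. For (IS4), I would expand $\DMNeg\nabla x \land \nabla x = (\cons x \land \DMNeg x) \land (\DMNeg\cons x \lor x)$, distribute to obtain $(\cons x \land \DMNeg x \land \DMNeg\cons x) \lor (\cons x \land \DMNeg x \land x)$, and then observe that the first disjunct collapses to $\bot$ by Lemma~\ref{lem:psident}(2) and the second by (PP4).

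The most delicate step is (IS3), where one needs $\nabla(x\land y)\approx \nabla x\land \nabla y$. My approach is to compute both sides explicitly as joins of meets. Using (PP5) together with (DM2), I would derive
\[
\DMNeg\cons(x\land y)\approx (\DMNeg\cons x \land \DMNeg\cons y)\lor(\DMNeg\cons x \land y)\lor(\DMNeg\cons y \land x),
\]
so that $\nabla(x\land y)$ equals the above join together with the extra disjunct $(x\land y)$. On the other side, distributing $(\DMNeg\cons x \lor x)\land(\DMNeg\cons y \lor y)$ yields the join of the four meets $(\DMNeg\cons x \land \DMNeg\cons y)$, $(\DMNeg\cons x \land y)$, $(x \land \DMNeg\cons y)$, and $(x\land y)$. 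Comparing the two joins, they coincide term-by-term, which establishes (IS3).

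The main obstacle is simply the bookkeeping in (IS3): managing the distribution of negation through the threefold meet coming from (PP5) and matching the resulting join against the expansion of $\nabla x \land \nabla y$. Once both normal forms are written out, however, they are patently equal, so no further appeal to nontrivial PP-algebra axioms is needed beyond (PP5) and the De Morgan laws. The remaining equations (IS1), (IS2), (IS4) are short and use only (PP2), (PP3), (PP4), and Lemma~\ref{lem:psident}.
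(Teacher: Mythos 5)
Your proposal is correct and follows essentially the same route as the paper's proof: a direct equational verification of \textbf{(IS1)}--\textbf{(IS4)}, with \textbf{(IS3)} handled exactly as in the paper by negating \textbf{(PP5)} via the De Morgan laws and matching the resulting four-term join against the distribution of $(\DMNeg\cons x \lor x)\land(\DMNeg\cons y\lor y)$. The only cosmetic difference is that for \textbf{(IS4)} you cite Lemma~\ref{lem:psident}(2) to kill the disjunct $\cons x \land \DMNeg x \land \DMNeg\cons x$, whereas the paper re-derives that identity inline from \textbf{(PP1)} and \textbf{(PP4)}.
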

\vspace{-.8em}
\begin{proof}
We must check that $\mathbf{A}^\nabla$
satisfies each of the characteristic equations of IS-algebras:
\begin{description}
    \item[(IS1)]
    $\nabla \bot \approx_{def} \DMNeg\cons \bot \lor \bot
                 \approx \DMNeg\cons\bot
                 \approx \DMNeg\cons\DMNeg\top
                 \approx_{\EqName{(PP2)}} \DMNeg\cons\top
                 \approx_{\EqName{(PP3)}} \DMNeg\top
                 \approx \bot$.
    \item[(IS2)] By absorption and commutativity of $\lor$, we have
    $x \land \nabla x \approx_{def} x \land (\DMNeg\cons x \lor x)
                      \approx x$.
    \item[(IS3)]
    $\nabla(x \land y) \approx_{def} \DMNeg\cons(x \land y) \lor (x \land y)
                       \approx_{\EqName{(PP5)}} (\DMNeg\circ x \land \DMNeg\circ y) \lor (\DMNeg\circ x \land y) \lor (\DMNeg\circ y \land  x) \lor (x \land y)
                       \approx (\DMNeg\cons x \lor x) \land (\DMNeg\cons y \lor y)
                       \approx_{def} \nabla x \land \nabla y$.
    \item[(IS4)]
    $\DMNeg\nabla x \land \nabla x \approx_{def} \DMNeg(\DMNeg\cons x \lor x) \land (\DMNeg\cons x \lor x)
                                  \approx_{\EqName{(DM2)}} (\cons x \land \DMNeg x) \land (\DMNeg\cons x \lor x)
                                  \approx (\cons x \land \DMNeg x \land \DMNeg\cons x) \lor (\cons x \land \DMNeg x \land x)
                                  \approx_{\EqName{(PP4)}} (\cons x \land \DMNeg x \land \DMNeg\cons x) \lor \bot
                                  \approx \cons x \land \DMNeg x \land \DMNeg\cons x
                                  \approx \cons x \land \DMNeg x \land \DMNeg\cons x \land \top
                                  \approx_{\EqName{(PP1)}} \cons x \land \DMNeg x \land \DMNeg\cons x \land \cons\cons x
                                  \approx_{\EqName{(PP4)}} \bot \land \DMNeg x
                                  \approx \bot$.
\qedhere
\end{description}
\vspace{-1em}
\end{proof}

Then, for the announced term-equivalence, we just need to check that:

\begin{theorem}
\label{the:equiv}
Given $\mathbf{A}\in\ISVar$
and $\mathbf{B}\in\PPVar$,
we have $\left(\mathbf{A}^\cons\right)^\nabla = \mathbf{A}$
and $\left(\mathbf{B}^\nabla\right)^\cons = \mathbf{B}$.
\end{theorem}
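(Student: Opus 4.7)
The approach is to observe that both of the constructions $(-)^\cons$ and $(-)^\nabla$ by definition preserve the $\DMSig$-reduct. So in each case only a single equation on the remaining unary operator needs to be verified: it suffices to show that, for every $\mathbf{A} \in \ISVar$, the operation $\nabla^\mathbf{A}$ coincides with the interpretation $\DMNeg\cons x \lor x$ computed in $\mathbf{A}^\cons$; and dually, for every $\mathbf{B} \in \PPVar$, the operation $\cons^\mathbf{B}$ coincides with $\DMNeg\nabla(x \land \DMNeg x)$ computed in $\mathbf{B}^\nabla$.

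For the first direction I would unfold the definitions: in $(\mathbf{A}^\cons)^\nabla$ we get $\nabla x = \DMNeg \cons^{\mathbf{A}^\cons}(x) \lor x = \DMNeg\DMNeg\nabla(x \land \DMNeg x) \lor x$, which by $\EqName{(DM1)}$ simplifies to $\nabla(x \land \DMNeg x) \lor x$. So the job reduces to showing that IS-algebras satisfy $\nabla(x \land \DMNeg x) \lor x \approx \nabla x$. This is precisely the contrapositive of item~3 of Lemma~\ref{lem:isident}: applying $\DMNeg$ and using De Morgan laws together with involutivity turns $\DMNeg\nabla(x \land \DMNeg x) \land \DMNeg x \approx \DMNeg\nabla x$ into the desired equation.

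For the second direction, unfolding in $(\mathbf{B}^\nabla)^\cons$ yields
\[
\cons x = \DMNeg\nabla^{\mathbf{B}^\nabla}(x \land \DMNeg x) = \DMNeg\bigl(\DMNeg\cons(x \land \DMNeg x) \lor (x \land \DMNeg x)\bigr),
\]
which, via $\EqName{(DM1)}$ and $\EqName{(DM2)}$, equals $\cons(x \land \DMNeg x) \land (x \lor \DMNeg x)$. Now apply $\EqName{(PP5)}$ with $y := \DMNeg x$, then use $\EqName{(PP2)}$ (to rewrite $\cons\DMNeg x$ as $\cons x$) and $\EqName{(DM1)}$; two applications of absorption collapse the meet to $\cons x$. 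The remaining identity $\cons x \land (x \lor \DMNeg x) \approx \cons x$ is exactly item~3 of Lemma~\ref{lem:psident}, finishing the proof.

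The only real obstacle is the PP $\Rightarrow$ IS $\Rightarrow$ PP direction, because the $\cons$-expression $\cons(x \land \DMNeg x)$ arising from the composition does not visibly reduce to $\cons x$ until $\EqName{(PP5)}$ is unfolded and simplified with $\EqName{(PP2)}$; this step is essentially a bookkeeping computation with the distributive lattice operations, but it is where the equational content of the axioms is actually used. The other direction is comparatively immediate since Lemma~\ref{lem:isident}(3) is tailor-made for it.
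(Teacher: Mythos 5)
Your proposal is correct and follows essentially the same strategy as the paper: reduce the claim to the two equational identities $\nabla(x \land \DMNeg x) \lor x \approx \nabla x$ (in $\ISVar$) and $\cons(x \land \DMNeg x) \land (\DMNeg x \lor x) \approx \cons x$ (in $\PPVar$), the second of which you verify by exactly the paper's computation via \textbf{(PP5)}, \textbf{(PP2)} and Lemma~\ref{lem:psident}(3). The only (cosmetic) divergence is in the first identity, where you obtain it by negating both sides of Lemma~\ref{lem:isident}(3), whereas the paper derives it from \textbf{(IS3)} together with Lemma~\ref{lem:isident}(1); both routes are valid.
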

\vspace{-.8em}
\begin{proof}
In order to prove that
$\left(\mathbf{A}^\cons\right)^\nabla = \mathbf{A}$, it is enough
to show that $\DMNeg(\DMNeg\nabla(x \land \DMNeg x))\lor x \approx \nabla x$ holds in~$\mathbf{A}$, that
is, the operation induced by 
the term $\left((\nabla x)^\cons\right)^\nabla$ 
coincides with the interpretation of~$\nabla$.
By the fact that $\nabla x \lor x \approx \nabla x$, we have
$\DMNeg(\DMNeg\nabla(x \land \DMNeg x)) \lor x \approx_{\EqName{(DM1)}} \nabla(x \land \DMNeg x) \lor x
                                            \approx_{\EqName{(IS3)}} (\nabla x \land \nabla \DMNeg x) \lor x
                                            \approx (\nabla x \lor x) \land (\nabla \DMNeg x \lor x)
                                            \approx_{{\rm Lemma}\ 2.5.1} (\nabla x \lor x) \land \top
                                            \approx \nabla x \lor x
                                            \approx \nabla x$.
Similarly, for proving $\left(\mathbf{B}^\nabla\right)^\cons = \mathbf{B}$, it is enough to
show that 
$\left((\cons x)^\nabla\right)^\cons$
induces an operation that coincides with the interpretation of~$\cons$, which amounts to proving
that $\DMNeg(\DMNeg\cons(x \land \DMNeg x) \lor (x \land \DMNeg x)) \approx \cons x$
holds in $\mathbf{B}$.
Then, we have
$\DMNeg(\DMNeg\cons(x \land \DMNeg x) \lor (x \land \DMNeg x)) \approx_{\EqName{(DM2)}} \cons(x \land \DMNeg x) \land (\DMNeg x \lor x)
                                                           \approx_{\EqName{(PP5)}} (\cons x \lor \cons\DMNeg x) \land (\cons x \lor x) \land (\cons\DMNeg x \lor \DMNeg x) \land (\DMNeg x \lor x)
                                                           \approx_{\EqName{(PP2)}} (\cons x \lor \cons x) \land (\cons x \lor x) \land (\cons x \lor \DMNeg x) \land (\DMNeg x \lor x) 
                                                           \approx
                                                           \cons x \land (\DMNeg x \lor x)
                                                           \approx_{{\rm Lemma}\ 3.3.3} \cons x$.
\end{proof}



By inspecting the interpretation
induced by the definition of
$\cons$ in terms of
$\nabla$ given in Theorem \ref{the:interpcons},
one may easily check the following result.

\begin{proposition}
$\PPAlg_i = \DMPPExtend{\ISAlg}_i$, for all $2 \leq i \leq 6$.
\end{proposition}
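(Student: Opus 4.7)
The proposition is a routine verification, but it needs to be spelled out case-by-case. Here is the plan.

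First, I would observe that the statement is entirely about agreement of the interpretation of a single connective. By the way $\PPAlg_i$ and $\DMPPExtend{\ISAlg_i}$ are set up, both share the same underlying carrier and the same $\DMSig$-reduct as $\ISAlg_i$ (for $\PPAlg_i$ this is explicit in the example right after Definition~\ref{def:ppalgebra}; for $\DMPPExtend{\ISAlg_i}$ it is built into the construction of Theorem~\ref{the:interpcons}). So the whole proof reduces to checking that, for every element $a$ of the carrier of $\ISAlg_i$, one has
\[
\cons^{\PPAlg_i}(a)\;=\;\DMNeg^{\ISAlg_i}\nabla^{\ISAlg_i}(a\land^{\ISAlg_i}\DMNeg^{\ISAlg_i}a).
\]

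Next, I would handle the case $i=6$ by a straightforward finite computation on the six elements of $\SixSet$. For each $a\in\SixSet$, compute $a\land\DMNeg a$ using the Hasse diagram of $\ISSix$ in Figure~\ref{fig:pp_algebra} and the definition of $\DMNeg^{\ISAlg_6}$: for $a\in\{\tvalue,\fvalue\}$ the meet is $\fvalue$; for $a\in\{\both,\neither\}$ the element is a fixpoint of $\DMNeg$, so the meet is $a$ itself; and for $a\in\{\etvalue,\efvalue\}$ the meet is $\efvalue$. Applying $\nabla^{\ISAlg_6}$ then yields $\etvalue$ in all cases except when the meet is $\efvalue$, where it returns $\efvalue$. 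Finally, applying $\DMNeg^{\ISAlg_6}$ gives $\efvalue$ precisely for $a\in\{\tvalue,\fvalue,\both,\neither\}$ and $\etvalue$ for $a\in\{\etvalue,\efvalue\}$, matching the definition of $\cons^{\PPAlg_6}$ displayed in the example.

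For the remaining cases $2\leq i\leq 5$, I would simply observe that each $\ISAlg_i$ is a subalgebra of $\ISAlg_6$ (as stated in Example~\ref{ex:issix}) and likewise each $\PPAlg_i$ is a subalgebra of $\PPAlg_6$; hence the operations of $\ISAlg_i$ and $\PPAlg_i$ are the restrictions of those of $\ISAlg_6$ and $\PPAlg_6$ to the appropriate carrier, and the coincidence established at $i=6$ immediately restricts. Since there is no real obstacle here beyond being careful about the finite computation, I would expect the only mildly delicate point to be making clear that $\DMPPExtend{(\cdot)}$ commutes with taking subalgebras in the relevant sense; but this is immediate since the induced operation $\DMNeg\nabla(x\land\DMNeg x)$ is defined by a term, and terms are preserved under subalgebras.
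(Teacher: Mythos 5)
Your proposal is correct and matches the paper's intent exactly: the paper dispatches this proposition with the single remark that one may ``easily check'' it by inspecting the interpretation induced by $\DMNeg\nabla(x\land\DMNeg x)$, and your write-up simply carries out that inspection (the six-element computation for $i=6$ checks out against the displayed definition of $\cons^{\PPAlg_6}$, and the restriction to subalgebras for $2\leq i\leq 5$ is immediate since $\cons$ is term-defined). No gaps.
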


From the equivalence just presented
and a similar result
for IS-algebras \cite{marcelino2021}, we may now conclude that the
variety of PP-algebras
is generated by $\PPSix$:
\begin{proposition}
\label{prop:sixgenerated}
$\PPVar = \Variety(\{\PPSix\})$.
\end{proposition}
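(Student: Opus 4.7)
The plan is to reduce this claim to the analogous generation result for involutive Stone algebras, which, as indicated by the text, is available from~\cite{marcelino2021}: namely, $\ISVar = \Variety(\{\ISSix\})$. The bridge is the term-equivalence $\ISVar \cong \PPVar$ established by Theorem~\ref{the:equiv}, together with the identification $\PPSix = \DMPPExtend{\ISSix}$ from the proposition just above.

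First, I would record the standard fact that term-equivalence commutes with the class operators $H$, $S$, and $P$. Concretely, for any class $\mathsf{K}\subseteq\ISVar$, write $\mathsf{K}^\cons \SymbDef \{\mathbf{A}^\cons \mid \mathbf{A}\in\mathsf{K}\}$ and similarly for $(\cdot)^\nabla$; then $H(\mathsf{K}^\cons) = (H(\mathsf{K}))^\cons$, $S(\mathsf{K}^\cons) = (S(\mathsf{K}))^\cons$, and $P(\mathsf{K}^\cons) = (P(\mathsf{K}))^\cons$. This is because the interpretation of $\cons$ in $\mathbf{A}^\cons$ is by definition a term operation of $\mathbf{A}$ (the one induced by $\DMNeg\nabla(x\land\DMNeg x)$), so it is automatically respected by $\DMnSig$-homomorphisms, inherited by $\DMnSig$-subalgebras, and computed componentwise in direct products of $\DMnSig$-algebras; the symmetric fact for $(\cdot)^\nabla$ follows from Theorem~\ref{the:interpnabla}. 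Consequently $\Variety(\mathsf{K}^\cons) = (\Variety(\mathsf{K}))^\cons$.

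Next, I would combine the ingredients: by the preceding proposition (taking $i=6$), $\PPSix = \DMPPExtend{\ISSix}$, so
\[
\Variety(\{\PPSix\}) \;=\; \Variety(\{\DMPPExtend{\ISSix}\}) \;=\; (\Variety(\{\ISSix\}))^\cons \;=\; \ISVar^\cons \;=\; \PPVar,
\]
where the penultimate equality is the result from~\cite{marcelino2021}, and the last equality uses Theorem~\ref{the:equiv}, which guarantees that $(\cdot)^\cons$ is a bijection between $\ISVar$ and $\PPVar$.

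The only step that is not entirely formal is the commutation of the variety operator with $(\cdot)^\cons$; but since $\cons$ is defined in $\mathbf{A}^\cons$ purely by a $\DMnSig$-term of $\mathbf{A}$, and the converse definition of $\nabla$ in $\mathbf{B}^\nabla$ is likewise by a $\DMoSig$-term of $\mathbf{B}$, the verification is routine and amounts to observing that the structural operations $H$, $S$, $P$ do not see the signature beyond the term-definable operations.
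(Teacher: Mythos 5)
Your proposal is correct and follows essentially the same route as the paper, which likewise derives the result from the term-equivalence of Theorem~\ref{the:equiv} together with the fact that $\ISVar$ is generated by $\ISSix$ and the identification $\PPSix = \DMPPExtend{\ISAlg}_6$. The paper leaves the commutation of $\Variety(\cdot)$ with the term-equivalence translations implicit; your proposal simply spells out that routine verification.
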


\subsection{
Logics associated to PP-algebras}

{Recall that we denote by $\PPlogicMC{}$ and $\PPlogic$, respectively, the \SetSet{} and \SetFmla{} order-preserving
logics induced by $\PPVar$.
Also, we denote by $\PPOnelogicMC{}$ and $\PPOnelogic{}$, respectively,
the \SetSet{} and \SetFmla{} 1-assertional logics induced by $\PPVar$.}
We will use the following auxiliary results 
together with analogous
results for $\ISlogic$ \cite{marcelino2021}
{(which smoothly generalizes to $\ISlogicMC{}$, the \SetSet{} order-preserving logic associated to $\ISVar$)}
to prove some characterizations of
the logics associated to $\PPVar$
in terms of single finite logical matrices.

\begin{lemma}
\label{lem:homomorphisms-relations}
Given $\mathbf{A} \in \ISVar$
and $\mathbf{B} \in \PPVar$,
\begin{enumerate}\itemsep0pt
    \item if 
    $h$ is a valuation on $\mathbf{A}$,
    then $h\left(\left(\Fm^\cons\right)^\nabla\right) = h(\Fm)$
    for all $\Fm \in \LangSet{\DMnSig}$;
    \item if 
    $h$ is a valuation on $\mathbf{B}$,
    then $h\left(\left(\Fm^\nabla\right)^\cons\right) = h(\Fm)$
    for all $\Fm \in \LangSet{\DMoSig}$;
    \item if 
    $h$ is a valuation on $\mathbf{A}$,
    then 
    the mapping $h^\cons \in \Hom(\LangAlg{\DMoSig}, \mathbf{A}^\cons)$
    such that $h^\cons(p) = h(p)$
    for all $p \in P$
    satisfies $h^\cons(\Fm^\circ) = h(\Fm)$ for all $\Fm \in \LangSet{\DMnSig}$;
    \item if 
    $h$ is a valuation on $\mathbf{B}$,
    then 
    the mapping $h^\nabla \in \Hom(\LangAlg{\DMnSig}, \mathbf{B}^\nabla)$
    such that $h^\nabla(p) = h(p)$
    for all $p \in P$
    satisfies $h^\nabla(\Fm^\nabla) = h(\Fm)$ for all $\Fm \in \LangSet{\DMoSig}$.
\end{enumerate}
\end{lemma}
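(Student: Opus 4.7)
The plan is to prove all four parts by straightforward induction on the structure of the formula $\FmA$, with each part reducing, at the only non-routine step, to an equation already established inside the proof of Theorem~\ref{the:equiv}.

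First I would handle parts~(iii) and~(iv), since the round-trip statements (i) and (ii) are easy consequences of these. For part~(iii), the base case ($\FmA = p \in P$) is immediate from the defining condition $h^\cons(p) = h(p)$. For the inductive step involving a connective $\DefCon \in \DMSig$, the translation $(-)^\cons$ leaves $\DefCon$ syntactically untouched, and since $\mathbf{A}^\cons$ and $\mathbf{A}$ have the same $\DMSig$-reduct, the homomorphism property together with the induction hypothesis immediately gives $h^\cons((\DefCon\FmA_1\ldots\FmA_n)^\cons) = h(\DefCon\FmA_1\ldots\FmA_n)$. The only substantive case is $\FmA = \nabla\FmB$: unfolding the translation gives $(\nabla\FmB)^\cons = \DMNeg\cons\FmB^\cons \lor \FmB^\cons$, and after applying the induction hypothesis and recalling that $\cons^{\mathbf{A}^\cons}$ is induced by $\DMNeg\nabla(x\land\DMNeg x)$, the goal becomes precisely $\DMNeg\DMNeg\nabla(h(\FmB)\land\DMNeg h(\FmB))\lor h(\FmB) = \nabla^{\mathbf{A}} h(\FmB)$, which is the very equation shown valid in every IS-algebra within the proof of Theorem~\ref{the:equiv}. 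Part~(iv) is fully dual, reducing at the critical step ($\FmA = \cons\FmB$) to the other equation proved there, namely $\DMNeg(\DMNeg\cons(x\land\DMNeg x)\lor(x\land\DMNeg x)) \approx \cons x$, valid in every PP-algebra.

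For part~(i), I would observe that the interpretation of $\cons$ in $\mathbf{A}^\cons$ is by definition the operation induced on $\mathbf{A}$ by the $\DMnSig$-term $\DMNeg\nabla(x\land\DMNeg x)$, which is precisely the term used by the translation $(-)^\nabla$ when eliminating $\cons$. Hence, by a routine induction on $\FmA^\cons \in \LangSet{\DMoSig}$, one shows $h^\cons(\FmC) = h(\FmC^\nabla)$ for every $\FmC \in \LangSet{\DMoSig}$; combining this identity with part~(iii) applied to $\FmA$ yields $h(\FmA) = h^\cons(\FmA^\cons) = h((\FmA^\cons)^\nabla)$. Part~(ii) follows symmetrically from part~(iv).

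The main obstacle is not mathematical depth but notational bookkeeping: one must carefully distinguish the four maps $h$, $h^\cons$, $h^\nabla$, and their lifts to translated formulas, and verify that each of the four inductive arguments bottoms out in exactly the right equation from the proof of Theorem~\ref{the:equiv}. Once the translations and the shared-reduct convention are unpacked cleanly, the verifications are mechanical.
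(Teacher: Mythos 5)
Your proposal is correct and follows essentially the same route as the paper: structural induction on formulas, with the only non-trivial cases ($\nabla\FmB$ for $(-)^\cons$ and $\cons\FmB$ for $(-)^\nabla$) discharged by the two equations already verified in the proof of Theorem~\ref{the:equiv}. The only (harmless) organizational difference is that you derive items~1 and~2 from items~3 and~4 via the auxiliary identity $h^\cons(\FmC)=h(\FmC^\nabla)$, whereas the paper proves item~1 by a separate direct induction using the fact that $((\nabla\FmB)^\cons)^\nabla$ and $\nabla\FmB$ induce the same operation.
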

\vspace{-.8em}
\begin{proof}
We will first discuss the proofs of items 1 and 3, 
which may then be easily adapted, respectively, for proving items 2 and 4.
Both proofs are by structural induction on the set of formulas. Starting with 1, when $\Fm \in P$,
the result trivially
holds, as propositional variables are not affected
by translations.
In case $\Fm = \nabla \FmB$, 
if $h((\FmB^\cons)^\nabla) = h(\FmB)$, we will have
$h((\Fm^\cons)^\nabla)) = h(((\nabla\FmB)^\cons)^\nabla)$. 
From the argument in the proof of Theorem~\ref{the:equiv},
we know that $((\nabla\FmB)^\cons)^\nabla$
and $\nabla\FmB$ induce the
same operation on $\mathbf{A}$,
thus $h(((\nabla\FmB)^\cons)^\nabla) = \nabla(h(\FmB)) = h(\Fm)$.
The proof is analogous for
the cases of $\land,\lor,\DMNeg,\top$ and $\bot$. 
Now, for item 3, the base case is again obvious,
and, in case $\Fm = \nabla \FmB$,
we have $h^\cons((\nabla\FmB)^\cons)=h^\cons(\DMNeg\cons\FmB^\cons \lor \FmB^\cons)=\DMNeg^{\mathbf{A}^\cons}\cons^{\mathbf{A}^\cons}h^\cons(\FmB^\cons) \lor^{\mathbf{A}^\cons}h^\cons(\FmB^\cons)$,
and, by the induction hypothesis,
the latter is equal to
$\DMNeg^{\mathbf{A}^\cons}\cons^{\mathbf{A}^\cons}h(\FmB) \lor^{\mathbf{A}^\cons}h(\FmB)$;
this is the same as $h(\nabla\FmB)$
in $(\mathbf{A}^\cons)^\nabla$, which coincides
with $\mathbf{A}$ by Theorem \ref{the:equiv}.
The proof is again analogous for
$\land,\lor,\DMNeg,\top$ and~$\bot$.
\end{proof}

The former result allows us to prove the following auxiliary facts:

\begin{proposition}~
\label{prop:logictransl}%
\begin{enumerate}\itemsep0pt
    {\item $\FmSetA \sequent_{\langle \mathbf{A}, D \rangle} \FmSetB$ \; if{f} \; $\FmSetA^\nabla \sequent_{\langle \mathbf{A}^\nabla, D \rangle} \FmSetB^\nabla$,
    where $\mathbf{A} \in \PPVar$ and $\langle \mathbf{A}, D\rangle$ is a $\DMoSig$-matrix\vspace{.2em}
    \item $\FmSetA \sequent_\mathcal{M} \FmSetB$ \; if{f} \;
    $\FmSetA^\nabla \sequent_{\mathcal{M}^\nabla} \FmSetB^\nabla$, for $\mathcal{M} = \{\langle \mathbf{A}, D \rangle \mid \mathbf{A} \in \PPVar\}$
    \item $\FmSetA \;\sequent_{\PPlogic}\; \FmSetB$ \; if{f} \; $\FmSetA^\nabla \;\sequent_{\ISlogic}\; \FmSetB^\nabla$
    \item $\FmSetA \; {\sequent^\top_\PPVar} \; \FmSetB$ \; if{f} \; $\FmSetA^\nabla\; {\sequent^\top_\ISVar}\; \FmSetB^\nabla$}
\end{enumerate}
\end{proposition}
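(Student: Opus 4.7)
The plan is to reduce every item to a single correspondence between $\DMoSig$-valuations on PP-algebras and $\DMnSig$-valuations on their $\nabla$-reducts, supplied by Lemma~\ref{lem:homomorphisms-relations}. For item~1, I would first observe that the map $h \mapsto h^\nabla$, with $h^\nabla$ as in Lemma~\ref{lem:homomorphisms-relations}.4, is a bijection between $\Hom(\LangAlg{\DMoSig}, \mathbf{A})$ and $\Hom(\LangAlg{\DMnSig}, \mathbf{A}^\nabla)$: any valuation is determined by its restriction to $P$, and $h$ and $h^\nabla$ agree on $P$ by construction, so every $\DMnSig$-valuation on $\mathbf{A}^\nabla$ arises uniquely in this way. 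By Lemma~\ref{lem:homomorphisms-relations}.4, $h^\nabla(\Fm^\nabla) = h(\Fm)$ for every $\Fm \in \LangSet{\DMoSig}$. Therefore a valuation $h$ on $\mathbf{A}$ designates (respectively, undesignates) a formula $\Fm$ iff $h^\nabla$ designates (respectively, undesignates) $\Fm^\nabla$ on $\mathbf{A}^\nabla$, from which the equivalence in item~1 follows directly from the definition of $\sequent_{\langle \mathbf{A}, D\rangle}$ and $\sequent_{\langle \mathbf{A}^\nabla, D\rangle}$.

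Item~2 follows immediately from item~1 by the general identity $\sequent_\mathcal{M} = \bigcap_{\DefMat \in \mathcal{M}} \sequent_\DefMat$, together with the observation that applying $\cdot^\nabla$ matrix-by-matrix sends $\mathcal{M}$ to $\mathcal{M}^\nabla = \{\langle \mathbf{A}^\nabla, D\rangle \mid \langle \mathbf{A}, D\rangle \in \mathcal{M}\}$, and that by Theorem~\ref{the:equiv} this is precisely the class of all matrices based on IS-algebras (for any $\mathbf{B} \in \ISVar$, $\mathbf{B}=(\mathbf{B}^\cons)^\nabla$, with $\mathbf{B}^\cons \in \PPVar$).

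For items~3 and~4, I would appeal to the matrix-theoretic characterisations recalled in Section~\ref{sec:preliminares}: since every PP-algebra has a bounded distributive lattice reduct that is preserved by $\cdot^\nabla$, the \SetSet{} order-preserving logic $\PPlogicMC{}$ coincides with the logic determined by the class of matrices $\langle \mathbf{A}, D\rangle$ with $\mathbf{A} \in \PPVar$ and $D$ a non-empty lattice filter of $\mathbf{A}$, and $D$ is a lattice filter of $\mathbf{A}$ exactly when it is a lattice filter of $\mathbf{A}^\nabla$. Similarly, the 1-assertional logic $\sequent^\top_\PPVar$ is determined by matrices with designated set $\{\top^\mathbf{A}\}$, and $\top^\mathbf{A} = \top^{\mathbf{A}^\nabla}$. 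In both cases, the translation acts class-wise on matrices in exactly the way needed for item~2 to apply, and again Theorem~\ref{the:equiv} guarantees that the image class is the whole of the corresponding IS-class.

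The main obstacle is really just notational hygiene: setting up the bijection $h \leftrightarrow h^\nabla$ carefully, and verifying that the generating classes of matrices for $\ISlogicMC{}$ and $\sequent^\top_\ISVar$ are indeed the $\cdot^\nabla$-images of the corresponding classes for PP-algebras, so that item~1 can be applied uniformly. No nontrivial calculation beyond what is already done in Lemma~\ref{lem:homomorphisms-relations} and Theorem~\ref{the:equiv} should be required.
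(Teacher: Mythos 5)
Your proposal is correct and follows essentially the same route as the paper: item~1 is reduced to the valuation correspondence of Lemma~\ref{lem:homomorphisms-relations} (the paper phrases this as two contrapositive directions using items 2--4 of that lemma, where you package it as a bijection $h\leftrightarrow h^\nabla$ on valuations, which is a harmless reorganization), and items 2--4 then follow by passing to the relevant classes of matrices exactly as you describe. If anything, your treatment of items 3 and 4 spells out the filter-preservation and surjectivity-onto-$\ISVar$ details that the paper leaves implicit.
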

\vspace{-.5em}
\begin{proof}
We start by proving item 1. From the left to the right, suppose that
    there is a valuation
    $h \in \Hom\left(\LangAlg{\DMnSig}, \mathbf{A}^\nabla\right)$ such that $h(\FmSetA^\nabla) \subseteq D$ while $h(\FmSetB^\nabla) \subseteq \overline{D}$.
    By items 2 and 3 of Lemma \ref{lem:homomorphisms-relations}, there is a
    valuation $h^\cons \in \Hom(\LangAlg{\DMoSig}, (\mathbf{A}^\nabla)^\cons) = \Hom(\LangAlg{\DMoSig}, \mathbf{A})$
    such that $h^\cons((\FmSetA^\nabla)^\cons) = h^\cons(\FmSetA)$
    and $h^\cons((\FmSetA^\nabla)^\cons) = h(\FmSetA^\nabla)$,
    thus $h^\cons(\FmSetA) = h(\FmSetA^\nabla) \subseteq D$. 
    Similarly, we may conclude that
    $h^\cons(\FmSetB) \subseteq \overline{D}$,
    and we are done.
    The other direction
    is similar, but using item 4 of
    Lemma~\ref{lem:homomorphisms-relations}. 
    Item 2, above, is a clear consequence of
    item~1, and items 3 and 4 follow directly from items 1 and 2, respectively.
\end{proof}

From this fact, we obtain that the order-preserving logics $\PPlogicMC{}$ and $\PPlogic{}$
are determined by a single 6-valued logical matrix:

\begin{theorem}
\label{the:order-preserving-six-generated}
$\PPlogicMC{} \;=\;\sequent_{\langle \PPSix, \Upset{\both} \rangle}$,
and thus $\PPlogic{} \;=\;\vdash_{\langle \PPSix, \Upset{\both} \rangle}$.
\end{theorem}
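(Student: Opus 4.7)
The approach is to transport the analogous characterization of $\ISlogicMC{}$ as the logic determined by the single matrix $\langle \ISSix, \Upset{\both}\rangle$ (which the authors attribute to~\cite{marcelino2021} and which, as noted just before Lemma~\ref{lem:homomorphisms-relations}, ``smoothly generalizes'' from the Set-Fmla to the Set-Set setting) through the $\nabla$-translation supplied by Proposition~\ref{prop:logictransl}. The term-equivalence established in Theorem~\ref{the:equiv}, combined with the identification $\PPSix = (\ISSix)^\cons$ (from the proposition immediately preceding the present theorem), makes this bridge rigorous.

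Concretely, for any $\FmSetA, \FmSetB \subseteq \LangSet{\DMoSig}$ I would exhibit the chain
\[
\FmSetA \sequent_{\PPlogicMC{}} \FmSetB
\;\Leftrightarrow\; \FmSetA^\nabla \sequent_{\ISlogicMC{}} \FmSetB^\nabla
\;\Leftrightarrow\; \FmSetA^\nabla \sequent_{\langle \ISSix, \Upset{\both}\rangle} \FmSetB^\nabla
\;\Leftrightarrow\; \FmSetA \sequent_{\langle \PPSix, \Upset{\both}\rangle} \FmSetB.
\]
The first equivalence is the Set-Set translation of Proposition~\ref{prop:logictransl} (item~2), applied to the class of matrices over PP-algebras with non-empty lattice filters as designated sets — recall from Section~\ref{sec:preliminares} that, since PP-algebras have bounded distributive lattice reducts, this is precisely the class that generates $\PPlogicMC{}$, and its $\nabla$-image is the corresponding class of matrices over IS-algebras that generates $\ISlogicMC{}$. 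The second equivalence is the known characterization of $\ISlogicMC{}$ from~\cite{marcelino2021}. The third equivalence is Proposition~\ref{prop:logictransl}(1) applied to the particular matrix $\langle \PPSix, \Upset{\both}\rangle$, for which one needs $(\PPSix)^\nabla = \ISSix$; this follows by combining $\PPSix = (\ISSix)^\cons$ with Theorem~\ref{the:equiv}, which gives $(\PPSix)^\nabla = ((\ISSix)^\cons)^\nabla = \ISSix$.

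The Set-Fmla statement $\PPlogic{} \;=\; \vdash_{\langle \PPSix, \Upset{\both}\rangle}$ is then immediate, being the restriction of the Set-Set identity just proved to singleton succedents: $\PPlogic{}$ and $\vdash_{\langle \PPSix, \Upset{\both}\rangle}$ are the Set-Fmla companions of $\PPlogicMC{}$ and $\sequent_{\langle \PPSix, \Upset{\both}\rangle}$, respectively. I do not foresee any genuine obstacle, as the heavy lifting has already been carried out in Theorem~\ref{the:equiv} and Proposition~\ref{prop:logictransl}; the only subtle point is noting that the designated set $\Upset{\both}$ is defined entirely in terms of the common lattice reduct of $\PPSix$ and $\ISSix$ and is therefore preserved by the $(\cdot)^\nabla$ transformation on algebras, as required to invoke Proposition~\ref{prop:logictransl}(1) with the same $D$ on both sides.
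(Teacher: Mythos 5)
Your proposal is correct and follows essentially the same route as the paper: the paper's proof is precisely the three-link chain through Proposition~\ref{prop:logictransl} and the characterization of $\ISlogicMC{}$ by $\langle \ISSix, \Upset{\both}\rangle$ from~\cite{marcelino2021}, merely written in the opposite direction. Your explicit verification that $(\PPSix)^\nabla = \ISSix$ and that $\Upset{\both}$ is unaffected by the translation is a welcome (if routine) addition that the paper leaves implicit.
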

\vspace{-.5em}
\begin{proof}
By Proposition \ref{prop:logictransl}
and the fact that $\sequent_{\ISlogic}$ is
characterized by the matrix 
\mbox{$\langle \ISSix, {\uparrow}{\both} \rangle$}~\cite{marcelino2021},
we have
$\FmSetA \sequent_{\langle \PPSix, \uparrow\both \rangle} \FmSetB$ if{f} $\FmSetA^\nabla \sequent_{\langle \ISSix, \uparrow\both \rangle} \FmSetB^\nabla$
if{f} $\FmSetA^\nabla \sequent_{\ISlogic} \FmSetB^\nabla$
if{f} $\FmSetA \sequent_{\PPlogic} \FmSetB$.
\end{proof}

Furthermore, we have that the 1-assertional logics $\PPOnelogicMC{}$ and $\PPOnelogic{}$
are determined by a single 3-valued matrix:

\begin{proposition}
$\PPOnelogicMC{} \;=\; \sequent^{\top}_{\Variety(\PPAlg_3)} \;=\; \sequent_{\langle \PPAlg_3, \{\etvalue\} \rangle}$,
and thus $\PPOnelogic{} \;=\; \OneAssert_{\Variety(\PPAlg_3)} \;=\; \vdash_{\langle \PPAlg_3, \{\etvalue\} \rangle}$.
\end{proposition}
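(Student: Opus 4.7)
The plan is to mirror the proof of Theorem \ref{the:order-preserving-six-generated}: transfer a characterization known for IS-algebras to PP-algebras via the translations of Proposition \ref{prop:logictransl}, and then sandwich $\sequent^\top_{\Variety(\PPAlg_3)}$ between two logics shown to coincide.

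First, I would take as given the analogous $1$-assertional characterization for IS-algebras, namely $\sequent^\top_\ISVar \,=\, \sequent_{\langle \ISAlg_3, \{\etvalue\} \rangle}$, as established in \cite{marcelino2021}. Applying item 4 of Proposition \ref{prop:logictransl} and then this IS-characterization yields
\[
\FmSetA \;\sequent^\top_\PPVar\; \FmSetB
\;\iff\; \FmSetA^\nabla \;\sequent^\top_\ISVar\; \FmSetB^\nabla
\;\iff\; \FmSetA^\nabla \;\sequent_{\langle \ISAlg_3, \{\etvalue\} \rangle}\; \FmSetB^\nabla.
\]
By the Proposition immediately preceding Theorem \ref{the:order-preserving-six-generated} (which asserts $\PPAlg_i = \ISAlg_i^\cons$) combined with Theorem \ref{the:equiv}, we have $\PPAlg_3^\nabla = \ISAlg_3$. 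So item 1 of Proposition \ref{prop:logictransl}, instantiated with $\mathbf{A} = \PPAlg_3$ and $D = \{\etvalue\}$, supplies one further equivalence with $\FmSetA \sequent_{\langle \PPAlg_3, \{\etvalue\} \rangle} \FmSetB$. Chaining these equivalences establishes the outer equality $\PPOnelogicMC = \sequent_{\langle \PPAlg_3, \{\etvalue\} \rangle}$.

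For the remaining term, note that $\mathsf{K} \mapsto \sequent^\top_\mathsf{K}$ is antimonotonic under class inclusion, so from $\{\PPAlg_3\} \subseteq \Variety(\PPAlg_3) \subseteq \PPVar$ we derive
\[
\PPOnelogicMC \;=\; \sequent^\top_\PPVar \;\subseteq\; \sequent^\top_{\Variety(\PPAlg_3)} \;\subseteq\; \sequent_{\langle \PPAlg_3, \{\etvalue\} \rangle},
\]
and the already-established coincidence of the two extremes collapses the inclusions to equalities. The $\SetFmla$ identities in the statement then follow at once by taking $\SetFmla$ companions of the logics above.

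The main obstacle is the reliance on the IS-algebra characterization $\sequent^\top_\ISVar = \sequent_{\langle \ISAlg_3, \{\etvalue\} \rangle}$ in its $\SetSet$ form. Should only a $\SetFmla$ version be available in \cite{marcelino2021}, one would have to upgrade it by a direct semantic argument: relying on Jónsson's theorem and the fact that every subdirectly irreducible IS-algebra (as depicted in Figure \ref{fig:pp_algebra}) has $\etvalue$ as its top element interacting uniformly with $\nabla$ and $\DMNeg$, one can show that any valuation into an arbitrary IS-algebra that threatens a would-be consequence can be replaced, as far as membership in $\{\etvalue\}$ is concerned, by a valuation into $\ISAlg_3$.
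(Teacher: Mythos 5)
Your proposal is correct and follows essentially the same route as the paper's own proof: the same sandwich $\sequent^{\top}_{\PPVar} \;\subseteq\; \sequent^{\top}_{\Variety(\PPAlg_3)} \;\subseteq\; \sequent_{\langle \PPAlg_3, \{\etvalue\} \rangle}$, collapsed by the same chain of equivalences through the $(\cdot)^\nabla$ translation, the identity $\PPAlg_3^\nabla = \ISAlg_3$, and the $\SetSet$ 1-assertional characterization of $\sequent^{\top}_{\ISVar}$ cited from \cite{marcelino2021}. Your extra justification of $\PPAlg_3^\nabla = \ISAlg_3$ via Theorem~\ref{the:equiv} is a small refinement the paper leaves implicit.
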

\begin{proof}
It is clear that $\sequent^{\top}_\PPVar \;\subseteq\; \sequent^{\top}_{\Variety(\PPAlg_3)} \;\subseteq\; \sequent_{\langle \PPAlg_3, \{\etvalue\} \rangle}$.
The result then follows because $\sequent^{\top}_\PPVar \;=\; \sequent_{\langle \PPAlg_3, \{\etvalue\} \rangle}$,
as $\FmSetA \sequent_{\langle \PPAlg_3, \{\etvalue\} \rangle} \FmSetB$ if{f} 
$\FmSetA^\nabla \sequent_{\langle \PPAlg_3^\nabla, \{\etvalue\} \rangle} \FmSetB^\nabla$ (by Lemma \ref{lem:homomorphisms-relations}) iff $\FmSetA^\nabla \sequent_{\langle \mathbf{IS}_3, \{\etvalue\} \rangle} \FmSetB^\nabla$ (because $\mathbf{IS}_3 = \PPAlg_3^\nabla$) if{f}
$\FmSetA^\nabla \sequent^{\top}_{\ISVar} \FmSetB^\nabla$ (by \cite[Prop.\ 4.5]{marcelino2021}) if{f}
$\FmSetA \sequent^{\top}_{\PPVar} \FmSetB$ (by Lemma \ref{lem:homomorphisms-relations}).
%
\end{proof}

As the last item in this series of characterizations, we
have, as it should be expected, that the
1-assertional
logic associated to $\{\PPAlg_2\}$ coincides with Classical Logic:

\begin{proposition}
\label{prop:clpp2}
{For all $\FmSetA, \FmSetB \subseteq \LangSet{\DMSig}$,
$\FmSetA \sequent_\CL \FmSetB \;\text{ iff }\; \FmSetA \sequent^\top_{\PPAlg_2} \FmSetB$}.
\end{proposition}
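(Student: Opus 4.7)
The plan is to exhibit a $\DMSig$-isomorphism between $\BoolAlg$ and the $\DMSig$-reduct of $\PPAlg_2$, and then lift it to the level of the induced consequence relations on formulas in $\LangSet{\DMSig}$. Since the statement only concerns $\DMSig$-formulas, the perfection operator plays no role in the evaluations, so the entire argument may be carried out at the level of the reduct.

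First I would observe that both $\BoolAlg$ and the $\DMSig$-reduct of $\PPAlg_2$ are two-element bounded distributive lattices equipped with an involution $\DMNeg$ that swaps their two elements: from the definitions, $\PPAlg_2$ has carrier $\{\efvalue,\etvalue\}$ with $\efvalue < \etvalue$, $\DMNeg^{\PPAlg_2}$ swapping the two values, $\bot^{\PPAlg_2} = \efvalue$ and $\top^{\PPAlg_2} = \etvalue$. Hence the map $\iota:\BoolAlg\to\PPAlg_2$ defined by $\iota(\fvalue)\SymbDef\efvalue$ and $\iota(\tvalue)\SymbDef\etvalue$ is a $\DMSig$-isomorphism which, in particular, carries $\tvalue$ onto $\top^{\PPAlg_2}=\etvalue$.

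Next, since every formula in $\FmSetA\cup\FmSetB\subseteq\LangSet{\DMSig}$ contains no occurrence of $\cons$, its value under any $h\in\Hom(\LangAlg{\DMoSig},\PPAlg_2)$ depends only on the restriction of $h$ to the $\DMSig$-reduct. The assignment $v\mapsto\iota\circ v$ therefore yields a bijection between $\Hom(\LangAlg{\DMSig},\BoolAlg)$ and the set of $\DMSig$-valuations into $\PPAlg_2$ obtained by restricting members of $\Hom(\LangAlg{\DMoSig},\PPAlg_2)$, and under this bijection $v(\FmA)=\tvalue$ if{}f $(\iota\circ v)(\FmA)=\etvalue$ for every $\FmA\in\LangSet{\DMSig}$.

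Finally, this bijection transports the designated set $\{\tvalue\}$ of the $\CL$-matrix $\langle\BoolAlg,\{\tvalue\}\rangle$ onto the designated set $\{\etvalue\}=\{\top^{\PPAlg_2}\}$ used by $\sequent^{\top}_{\PPAlg_2}$, so the definition of the matrix \SetSet{} consequence relation immediately gives $\FmSetA\sequent_\CL\FmSetB$ if{}f $\FmSetA\sequent_{\langle\PPAlg_2,\{\etvalue\}\rangle}\FmSetB$, that is, $\FmSetA\sequent^{\top}_{\PPAlg_2}\FmSetB$. There is no genuine obstacle here; the only point that needs to be made explicit is the observation that, because $\cons$ does not appear in the formulas under consideration, evaluating in the full $\DMoSig$-algebra $\PPAlg_2$ and in its $\DMSig$-reduct coincide.
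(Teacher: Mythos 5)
Your proposal is correct and takes essentially the same approach as the paper, whose entire proof is the one-line remark that the result follows from the evident isomorphism between $\PPAlg_2$ and $\BoolAlg$. You simply spell out that isomorphism explicitly and note (correctly) that it must be taken at the level of the $\DMSig$-reduct since $\cons$ does not occur in the formulas under consideration.
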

\begin{proof}
The result follows from the clear isomorphism between
$\PPAlg_2$ and $\BoolAlg$.
\end{proof}


Finally, we may explore the term-equivalence presented in the previous subsection (Theorem~\ref{the:equiv}) to
prove another important fact about $\PPlogic$.
For the definitions of full self-extensionality, protoalgebraizability and algebraizability that appear in the following result, we refer the reader
to \cite[Definitions 5.25, 6.1 and 3.11, resp.]{font:2016}.

\begin{proposition}
\label{the:one-assert-three-generated}
$\PPlogic$ is 
fully
self-extensional 
and non-protoalgebraic (hence non-algebraizable).
\end{proposition}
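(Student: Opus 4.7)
The plan is to exploit the term-equivalence between $\PPVar$ and $\ISVar$ from Theorem~\ref{the:equiv}, together with the translation of consequence in Proposition~\ref{prop:logictransl} and the characterization of $\PPlogic$ via $\langle\PPSix,\Upset{\both}\rangle$ from Theorem~\ref{the:order-preserving-six-generated}, to transfer both properties from the corresponding results known in the literature for $\ISlogic$ (see~\cite{marcelino2021}).

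For full self-extensionality, I would invoke the general fact that the order-preserving logic of a variety of bounded distributive lattice expansions is fully self-extensional: two formulas $\FmA,\FmB\in\LangSet{\DMoSig}$ are interderivable in $\PPlogic$ precisely when the equation $\FmA\approx\FmB$ holds throughout $\PPVar$, and this equational equivalence is a congruence on the term algebra $\LangAlg{\DMoSig}$ that is preserved in arbitrary contexts (see~\cite[Chap.~6]{font:2016}). Alternatively, full self-extensionality of $\PPlogic$ can be obtained by transporting the corresponding property of $\ISlogic$ along the mutually inverse translations $\cdot^\cons$ and $\cdot^\nabla$ of Proposition~\ref{prop:logictransl}.

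For non-protoalgebraicity, my plan is to transport the analogous failure known for $\ISlogic$. Protoalgebraicity may be characterized by the monotonicity of the Leibniz operator on the lattice of deductive filters; using Lemma~\ref{lem:homomorphisms-relations} and Proposition~\ref{prop:logictransl}, one can translate a witness of non-monotonicity for the Leibniz operator of $\ISlogic$ on $\ISSix$ into a witness of non-monotonicity for that of $\PPlogic$ on $\PPSix$, by applying $\cdot^\cons$ to the formulas used to define the relevant filters and congruences. Non-algebraizability is then immediate, since every algebraizable logic is in particular protoalgebraic.

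The main obstacle will be the formal verification that the translations preserve the structure of deductive filters and Leibniz congruences across the shift of signature between $\DMnSig$ and $\DMoSig$. This should follow from the point-wise correspondence of valuations established by Lemma~\ref{lem:homomorphisms-relations}, which guarantees that the image of a deductive filter under each translation is again a deductive filter of the translated matrix, with its associated Leibniz congruence correctly matched; once this bookkeeping is in place, both transfers are routine.
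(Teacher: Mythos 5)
Your plan coincides with the paper's own proof, which is exactly this transfer argument: it derives both properties by combining the corresponding result for $\ISlogic$ (\cite[Prop.~4.2]{marcelino2021}), the general facts about fully self-extensional and (non-)protoalgebraic logics in \cite[Theorem~7.18]{font:2016}, and the term-equivalence of $\ISVar$ and $\PPVar$ from Theorem~\ref{the:equiv}, leaving the filter/Leibniz-congruence bookkeeping implicit just as you propose to carry it out. The only point to watch is that your first, direct argument for self-extensionality (interderivability in $\PPlogic$ coincides with validity of $\FmA\approx\FmB$ in $\PPVar$) by itself yields only plain self-extensionality; the \emph{full} version requires the cited general theorem on logics preserving degrees of truth, which both you and the paper invoke.
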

\begin{proof}
The result follows from \cite[Prop.\ 4.2]{marcelino2021},
\cite[Theorem 7.18, item 4]{font:2016}, and
the term-equivalence of $\ISVar$ with $\PPVar$
given by Theorem \ref{the:equiv}.
\end{proof}


\subsection{De Morgan algebras with a perfection operator}

We now present a  recipe for constructing a perfect paradefinite algebra by endowing a De Morgan algebra with a perfection operator. 
This should be of particular interest, as we shall see in subsection~\ref{subsec:rec}, for an investigation on \textbf{LFI}s and \textbf{LFU}s when the De Morgan algebra at hand happens not to be Boolean.
We will see in the next section how to axiomatize logics induced by PP-algebras produced through this recipe, starting from a calculus for the logic induced by a De Morgan algebra given as input.
\vspace{.5em}
\begin{definition}
\label{def:expcons}
Let $\mathbf{A}$
be a $\DMSig$-algebra. Given $\aefvalue, \aetvalue \notin A$, we define the $\DMoSig$-algebra $\mathbf{A}^\cons \SymbDef \langle A \cup \{\aefvalue, \aetvalue\}, \cdot^{\mathbf{A}^\cons} \rangle$ by letting:
\vspace{-.5em}
\begin{align*}
    a \land^{\mathbf{A}^\cons} b &\SymbDef
    \begin{cases}
        a \land^\mathbf{A} b & \text{if } a,b \in A\\
        \aetvalue & \text{if } a = b = \aetvalue\\
        \aefvalue & \text{if } a = \aefvalue \text{ or }  b = \aefvalue\\
        c & \text{if } \{a,b\} = \{\aetvalue, c\} \text{ with } c \in A
    \end{cases}
    &
    a \lor^{\mathbf{A}^\cons} b &\SymbDef
    \begin{cases}
        a \lor^\mathbf{A} b & \text{if } a,b \in A\\
        \aefvalue & \text{if } a = b = \aefvalue\\
        \aetvalue & \text{if } a = \aetvalue \text{ or } b = \aetvalue\\
        c & \text{if } \{a,b\} = \{\aefvalue, c\} \text{ with } c \in A
    \end{cases}
    \\
    \DMNeg^{\mathbf{A}^\cons} a &\SymbDef
    \begin{cases}
        \DMNeg^\mathbf{A} a & \text{if } a \in A\\
        \aefvalue & \text{if } a = \aetvalue\\
        \aetvalue & \text{if } a = \aefvalue
    \end{cases}
    &
    \cons^{\mathbf{A}^\cons} a &\SymbDef
    \begin{cases}
        \aetvalue & \text{if } a = \aefvalue \text{ or } a = \aetvalue\\
        \aefvalue & \text{otherwise}
    \end{cases}
    \\
    \bot^{\mathbf{A}^\cons} &\SymbDef \aefvalue
    &
    \top^{\mathbf{A}^\cons} &\SymbDef \aetvalue
\end{align*}
\noindent 
In addition, we define
the $\DMnSig$-algebra $\DMISExtend{\mathbf{A}}\SymbDef \langle A \cup \{\aefvalue, \aetvalue\}, \cdot^{\mathbf{A}^\nabla} \rangle$
interpreting the connectives in $\DMSig$ as above,
while letting $\nabla^{\DMISExtend{\mathbf{A}}} a \SymbDef \efvalue$ if $a = \efvalue$ and $\nabla^{\DMISExtend{\mathbf{A}}} a \SymbDef \etvalue$ otherwise (cf. {\rm\cite{marcelino2021}}).
\end{definition}

\begin{proposition}
\label{prop:psfromdm}
If
$\mathbf{A}$ is a De Morgan algebra,
then
$\mathbf{A}^\cons$ is
a PP-algebra.
\end{proposition}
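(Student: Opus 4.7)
The plan is to establish that $\mathbf{A}^\cons$ satisfies the defining equations (PP1)--(PP5) of a perfect paradefinite algebra. The argument naturally splits into two tasks: checking that the $\DMSig$-reduct is a De Morgan algebra, and verifying the five axioms governing $\cons$.

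For the first task, observe that the lattice structure of $\mathbf{A}^\cons$ is obtained by adjoining $\aefvalue$ as a new bottom and $\aetvalue$ as a new top to the bounded distributive lattice $\mathbf{A}$; this preserves boundedness and distributivity by a routine case analysis on whether elements lie in $A$ or in $\{\aefvalue, \aetvalue\}$. The negation, which agrees with $\DMNeg^\mathbf{A}$ on $A$ and swaps $\aefvalue$ with $\aetvalue$, is involutive and satisfies \textbf{(DM2)} by another case analysis (the only interesting cases being when at least one of the arguments is a new element).

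For the second task, I would exploit the very simple structure of $\cons^{\mathbf{A}^\cons}$: its range is $\{\aefvalue, \aetvalue\}$, and it takes value $\aetvalue$ exactly on $\{\aefvalue, \aetvalue\}$. Then \textbf{(PP1)} holds because $\cons x \in \{\aefvalue, \aetvalue\}$ forces $\cons\cons x = \aetvalue = \top^{\mathbf{A}^\cons}$; \textbf{(PP2)} holds since $\DMNeg^{\mathbf{A}^\cons}$ preserves both $A$ and $\{\aefvalue, \aetvalue\}$ setwise, so $x$ and $\DMNeg x$ are on the same side of the case split; \textbf{(PP3)} holds directly by definition; and \textbf{(PP4)} splits into two subcases, one where $x \in \{\aefvalue, \aetvalue\}$ (whence $x \land \DMNeg x = \aefvalue$) and one where $x \in A$ (whence $\cons x = \aefvalue$), with both yielding $\aefvalue = \bot^{\mathbf{A}^\cons}$.

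An alternative, slicker route (which I would actually prefer to present) is to invoke the fact, used in~\cite{marcelino2021}, that $\mathbf{A}^\nabla$ is an IS-algebra whenever $\mathbf{A}$ is a De Morgan algebra, and then apply Theorem~\ref{the:interpcons} to conclude that $(\mathbf{A}^\nabla)^\cons$ is a PP-algebra. The remaining step is to show $(\mathbf{A}^\nabla)^\cons = \mathbf{A}^\cons$, that is, that the operation induced by $\DMNeg\nabla(x \land \DMNeg x)$ in $\mathbf{A}^\nabla$ coincides with $\cons^{\mathbf{A}^\cons}$: for $x \in A$ one has $x \land \DMNeg x \in A$, so $\nabla$ returns $\aetvalue$ and the whole expression evaluates to $\aefvalue$; for $x \in \{\aefvalue, \aetvalue\}$ one has $x \land \DMNeg x = \aefvalue$, so $\nabla$ returns $\aefvalue$ and the expression evaluates to $\aetvalue$.

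The main obstacle in the direct verification is axiom \textbf{(PP5)}, which requires a somewhat tedious case analysis over all nine combinations of $x, y$ relative to $A$ versus $\{\aefvalue, \aetvalue\}$; however, under the alternative strategy this entire verification is absorbed into Theorem~\ref{the:interpcons}, leaving only the elementary computation of $\DMNeg\nabla(x \land \DMNeg x)$ sketched above.
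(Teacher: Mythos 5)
Your proposal is correct, and it actually contains two routes: the first (direct verification) is essentially the paper's own proof, which likewise takes the De Morgan reduct for granted and checks \textbf{(PP1)}--\textbf{(PP5)} by case analysis on whether the arguments lie in $A$ or in $\{\aefvalue,\aetvalue\}$, with \textbf{(PP5)} consuming most of the effort exactly as you anticipate. Your preferred alternative is genuinely different: it factors the claim through the term-equivalence machinery by using that $\mathbf{A}^\nabla$ of Definition~\ref{def:expcons} is an IS-algebra (a fact the paper imports from \cite{marcelino2021} but does not reprove), applying Theorem~\ref{the:interpcons} to conclude that the expansion of $\mathbf{A}^\nabla$ by the term $\DMNeg\nabla(x\land\DMNeg x)$ is a PP-algebra, and then checking that this term induces exactly $\cons^{\mathbf{A}^\cons}$. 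Your computation of that last step is correct: for $x\in A$ the meet $x\land\DMNeg x$ stays in $A$, so $\nabla$ returns $\aetvalue$ and negation gives $\aefvalue$, while for $x\in\{\aefvalue,\aetvalue\}$ the meet is $\aefvalue$ and the expression evaluates to $\aetvalue$. One small presentational caveat: the paper overloads the notation $(\cdot)^\cons$ (same-carrier term expansion in Theorem~\ref{the:interpcons} versus the two-new-elements construction in Definition~\ref{def:expcons}), so you should say explicitly that $(\mathbf{A}^\nabla)^\cons$ is meant in the former sense. As for the trade-off: the paper's direct verification is self-contained and does not lean on the external fact that $\mathbf{A}^\nabla\in\ISVar$, whereas your route eliminates the entire \textbf{(PP5)} case analysis, which is by far the longest part of the paper's argument. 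Both are sound.
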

\begin{proof}
When $\mathbf{A}$ is a De Morgan algebra,
it is clear that the $\DMSig$-reduct of $\mathbf{A}^\cons$ is also a De Morgan algebra.
Moreover, the operation $\cons^{\mathbf{A}^\cons}$ defined above
satisfies all equations presented in Definition \ref{def:ppalgebra}, as we confirm below:
\begin{description}
    \item[(PP1)]
    By the definition of $\cons^{\mathbf{A}^\cons}$, we have either
    (1) $\cons^{\mathbf{A}^\cons} a = \aetvalue$ or (2) $\cons^{\mathbf{A}^\cons} a = \aefvalue$.
    In both cases we have $\cons^{\mathbf{A}^\cons} \cons^{\mathbf{A}^\cons} a = \aetvalue = \top^{\mathbf{A}^\cons}$.
    \item[(PP2)]
    By the definition of $\DMNeg^{\mathbf{A}^\cons}$, we have that
    $a \in \{\aefvalue, \aetvalue\}$ iff $\DMNeg^{\mathbf{A}^\cons} a \in \{\aefvalue, \aetvalue\}$.
    Also, we have either (1) $a \in \{\aefvalue, \aetvalue\}$ or (2) $a \notin \{\aefvalue, \aetvalue\}$.  
    If (1) is the case, then $\cons^{\mathbf{A}^\cons} a = \cons^{\mathbf{A}^\cons} \DMNeg^{\mathbf{A}^\cons} a = \aetvalue$; 
    alternatively, if (2) is the case, then $\cons^{\mathbf{A}^\cons} a = \cons^{\mathbf{A}^\cons} \DMNeg^{\mathbf{A}^\cons} a = \aefvalue$.
    \item[(PP3)]
    By the definition of $\cons^{\mathbf{A}^\cons}$ and $\top^{\mathbf{A}^\cons}$, we have that
    $\cons^{\mathbf{A}^\cons} \top^{\mathbf{A}^\cons} = \cons^{\mathbf{A}^\cons} \aetvalue = \aetvalue = \top^{\mathbf{A}^\cons}$.
    \item[(PP4)]
    By the definition of $\cons^{\mathbf{A}^\cons}$, we have either
    (1) $\cons^{\mathbf{A}^\cons} a = \aetvalue$ or (2) $\cons^{\mathbf{A}^\cons} a = \aefvalue$.
    If (1) is the case, we have either (1.1) $a = \aefvalue$ or (1.2) $a = \aetvalue$, then:
    If (1.2) is the case, then, by the definition of $\DMNeg^{\mathbf{A}^\cons}$, $\DMNeg^{\mathbf{A}^\cons} a = \aefvalue$.
    In all cases we have that at least one among $a, \DMNeg^{\mathbf{A}^\cons} a$ and $\cons^{\mathbf{A}^\cons} a$ is $\aefvalue$. Then, by the definition of $\land^{\mathbf{A}^\cons}$, we have $a \land \DMNeg^{\mathbf{A}^\cons} a \land^{\mathbf{A}^\cons} \cons^{\mathbf{A}^\cons} a = \aefvalue = \bot^{\mathbf{A}^\cons}$.
    \item[(PP5)]
    By the definition of $\cons^{\mathbf{A}^\cons}$, we have that either
    (1) $\cons^{\mathbf{A}^\cons} (a \land^{\mathbf{A}^\cons} b) = \aetvalue$ or (2) $\cons^{\mathbf{A}^\cons} (a \land^{\mathbf{A}^\cons} b) = \aefvalue$.
    If (1) is the case, we have either (1.1) $a \land^{\mathbf{A}^\cons} b = \aefvalue$ or (1.2) $a \land^{\mathbf{A}^\cons} b = \aetvalue$. 
    If (1.1) is the case, then, by the definition of $\land^{\mathbf{A}^\cons}$, we have either (1.1.1) $a = \aefvalue$ or (1.1.2) $b = \aefvalue$.
    If (1.1.1) is the case, then, by the definition of $\cons^{\mathbf{A}^\cons}$ and $\dmneg^{\mathbf{A}^\cons}$, we have both $\cons^{\mathbf{A}^\cons} a = \aetvalue$ and $\DMNeg^{\mathbf{A}^\cons} a = \aetvalue$.
    The case (1.1.2) is similar to (1.1.1).
    If (1.2) is the case, then, by the definition of $\land^{\mathbf{A}^\cons}$, we have $a = b = \aetvalue$.
    By the definition of $\cons^{\mathbf{A}^\cons}$, we have $\cons^{\mathbf{A}^\cons} a = \cons^{\mathbf{A}^\cons} b = \aetvalue$.
    Hence, in all subcases of (1) we have, by the definition of $\lor^{\mathbf{A}^\cons}$:
    $\cons^{\mathbf{A}^\cons}a \lor^{\mathbf{A}^\cons} \cons^{\mathbf{A}^\cons}b = \cons^{\mathbf{A}^\cons} a \lor^{\mathbf{A}^\cons} \DMNeg^{\mathbf{A}^\cons} b = \cons^{\mathbf{A}^\cons} b \lor \DMNeg^{\mathbf{A}^\cons} a = \aetvalue$.
    If (2) is the case, we have, $a \land^{\mathbf{A}^\cons} b \notin \{\aefvalue, \aetvalue\}$.
    Then, by the definition of $\land^{\mathbf{A}^\cons}$, we have either (2.1) $a, b \notin \{\aefvalue, \aetvalue\}$ or (2.2) both $a = \aetvalue$ and $b \not\in \{\aefvalue, \aetvalue\}$ or (2.3) both $b = \aetvalue$ and $a \not\in \{\aefvalue, \aetvalue\}$.
    If (2.1) is the case, then, by the definition of $\cons^{\mathbf{A}^\cons}$, we have $\cons^{\mathbf{A}^\cons} a = \cons^{\mathbf{A}^\cons} b = \aefvalue$.
    If (2.2) is the case, then, by the definitions of $\cons^{\mathbf{A}^\cons}$ and $\dmneg^{\mathbf{A}^\cons}$, we have $\cons^{\mathbf{A}^\cons} b = \aefvalue$ and $\DMNeg^{\mathbf{A}^\cons} a = \aefvalue$.
    If (2.3) is the case, then, by the definitions of $\cons^{\mathbf{A}^\cons}$ and $\dmneg^{\mathbf{A}^\cons}$, we have $\cons^{\mathbf{A}^\cons} a = \aefvalue$ and $\DMNeg^{\mathbf{A}^\cons} b = \aefvalue$.
    Hence, in all subcases of (2) we have, by the definition of $\lor^{\mathbf{A}^\cons}$, that at least one among 
    $\cons^{\mathbf{A}^\cons}a \lor^{\mathbf{A}^\cons} \cons^{\mathbf{A}^\cons}b$, $\cons^{\mathbf{A}^\cons} a \lor^{\mathbf{A}^\cons} \DMNeg^{\mathbf{A}^\cons} b$ or $\cons^{\mathbf{A}^\cons} b \lor \DMNeg^{\mathbf{A}^\cons} a$ is $\aefvalue$.
    In all cases we have that:
    $\cons^{\mathbf{A}^\cons} (a \land^{\mathbf{A}^\cons} b) = (\cons^{\mathbf{A}^\cons}a \lor^{\mathbf{A}^\cons} \cons^{\mathbf{A}^\cons}b) \land (\cons^{\mathbf{A}^\cons} a \lor^{\mathbf{A}^\cons} \DMNeg^{\mathbf{A}^\cons} b) \land (\cons^{\mathbf{A}^\cons} b \lor \DMNeg^{\mathbf{A}^\cons} a)$.
    \qedhere
\end{description}
\end{proof}

\begin{example}
Comparing Figure {\rm\ref{fig:de_morgan}}
with Figure {\rm\ref{fig:pp_algebra}},
we see that $\ISAlg_6$, $\ISAlg_5$ 
and $\ISAlg_4$ coincide, respectively, with
$\DMPPExtend{\FDEAlg}$, $\DMPPExtend{\KleeneAlg}$
and $\DMPPExtend{\BoolAlg}$.
\end{example}

%
%


\subsection{The lattice of extensions of $\PPlogic$}
\label{lattice-extensions}

Given a $\DMSig$-matrix $\DefMat \SymbDef \langle \mathbf{A}, D \rangle$, let $\DMPPExtend{\DefMat} \SymbDef \langle \DMPPExtend{\mathbf{A}}, D \cup \{\aetvalue\} \rangle$ be the $\DMoSig$-matrix with the underlying
(by Proposition \ref{prop:psfromdm}, perfect paradefinite)
algebra $\DMPPExtend{\mathbf{A}}$ given by Definition \ref{def:expcons}. We denote by $\DMReductPP{\DefMat}$ the $\DMSig$-reduct of $\DMPPExtend{\DefMat}$.
Furthermore, given a class of $\DMSig$-matrices $\mathcal{M}$, we let $\DMPPExtend{\mathcal{M}} \SymbDef \{\DefMat^\cons \mid \DefMat \in \mathcal{M}\}$ and $\DMReductPP{\mathcal{M}} \SymbDef \{\DMReductPP{\DefMat} \mid \DefMat \in \mathcal{M}\}$.
 {Whenever $\SuperBelnapCon$ is a super-Belnap logic, denote by $\SuperBelnapCon^\cons$ the logic determined by the family of matrices
$\{\DMPPExtend{\DefMat} \mid \DefMat \text{ is a that non-trivial model of } \SuperBelnapCon\}$.
The series of results presented in
this section shows that the mapping $(\cdot)^\circ$ just defined
constitutes a lattice embedding from the
lattice of super-Belnap logics into the lattice
of extensions of the logic $\PPlogic$.
This allows us, in particular, to
lift the result on the lower bound of the cardinality of extensions of $\FDELogic$
to a corresponding one on the lower bound of the cardinality of extensions of
$\PPlogic$.
}

{Before introducing the results,
we recall some helpful
definitions from abstract algebraic logic~\cite{font:2016}}.
Given a $\Sigma$-matrix
$\DefMat = \langle \mathbf{A}, D \rangle$,
a congruence $\theta \in \ConSet \mathbf{A}$
is said to be \emph{compatible with} $\DefMat$
when $b \in D$ whenever both $a \in D$
and $a \theta b$, for all $a,b \in A$.
We denote by $\LeibCong{\DefMat}$
the \emph{Leibniz congruence}
associated to $\DefMat$,
namely
the greatest congruence of 
$\mathbf{A}$ compatible with $\DefMat$.
The matrix $\Reduce{\DefMat} = \langle \mathbf{A}/\LeibCong{\DefMat}, D/\LeibCong{\DefMat} \rangle$
is the \emph{reduced version} of $\DefMat$.
{We say that $\DefMat$ is
\emph{reduced} when it coincides with its own reduced version
(or, equivalently, when its Leibniz
congruence is the identity relation
on~$A$).}
It is well known that
$\sequent_\DefMat = \sequent_{\Reduce{\DefMat}}$
(and thus $\vdash_\DefMat \;=\; \vdash_{\Reduce{\DefMat}}$)
and, since every logic is determined by a class of matrix models, we have that every logic coincides
with the logic determined by
its reduced matrix models.
The class of all reduced
matrix models for a logic $\vdash$
is denoted by $\ReducedMatrices(\vdash)$.

\begin{lemma}
Let $\DefMat$ be a 
{non-trivial model  
  of
$\FDELogic$}. Then $\Reduce{\DefMat} \cong \Reduce{\left(\DMReductPP{\DefMat}\right)}$.
\end{lemma}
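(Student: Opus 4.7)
The plan is to exhibit a surjective strict matrix homomorphism $\pi\colon \DMReductPP{\DefMat} \to \Reduce{\DefMat}$ and then invoke the standard abstract-algebraic-logic principle that whenever there is a surjective strict matrix homomorphism onto a reduced matrix, its kernel coincides with the Leibniz congruence of the source, so the reduced version of the source is isomorphic to the codomain (see, e.g.,~\cite{font:2016}). Applied to~$\pi$, this will yield $\Reduce{(\DMReductPP{\DefMat})} \cong \Reduce{\DefMat}$ at once.

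The map I would define is $\pi(a) := [a]_{\LeibCong{\DefMat}}$ for $a \in A$, together with $\pi(\aetvalue) := [\top^\mathbf{A}]_{\LeibCong{\DefMat}}$ and $\pi(\aefvalue) := [\bot^\mathbf{A}]_{\LeibCong{\DefMat}}$. Checking that $\pi$ is a $\DMSig$-homomorphism is a case analysis against the clauses of Definition~\ref{def:expcons}; the essential observation is that $\Reduce{\DefMat}$, being a reduced matrix model of $\FDELogic$, is based on a De Morgan algebra whose lattice top and bottom are precisely $[\top^\mathbf{A}]$ and $[\bot^\mathbf{A}]$, so the ``new top/bottom'' behaviour of $\aetvalue$ and $\aefvalue$ built into Definition~\ref{def:expcons} is faithfully reproduced after quotienting. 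Strictness---i.e., $\pi^{-1}(D/\LeibCong{\DefMat}) = D \cup \{\aetvalue\}$---is tautological on $A$; for the two new elements one needs $\top^\mathbf{A} \in D$ (which holds because $\vdash_{\FDELogic} \top$ is a theorem and $\DefMat$ models $\FDELogic$) and $\bot^\mathbf{A} \notin D$ (which follows from the non-triviality of $\DefMat$ combined with the explosiveness $\bot \vdash_{\FDELogic} x$ valid in $\FDELogic$).

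The main obstacle I anticipate is the verification of the homomorphism property of $\pi$ in the mixed cases that pair some $a \in A$ with one of $\aetvalue, \aefvalue$. To show, e.g., $\pi(\aetvalue \land a) = \pi(\aetvalue) \land \pi(a)$, one needs $[\top^\mathbf{A}] \land [a] = [a]$ in $\mathbf{A}/\LeibCong{\DefMat}$, i.e., $(a, \top^\mathbf{A} \land^\mathbf{A} a) \in \LeibCong{\DefMat}$. This identification is not automatic for arbitrary matrices; it rests on the full self-extensionality of $\FDELogic$, ensuring that the interderivability $p \SymLogEquiv_{\FDELogic} p \land \top$ lifts to a Leibniz-equivalence in every matrix model of $\FDELogic$. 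Once this fact and the analogous absorption properties for $\bot$, $\lor$, and $\DMNeg$ are in place, the case analysis closes uniformly and the argument proceeds as sketched.
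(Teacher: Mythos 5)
Your argument is correct, but it takes a genuinely different route from the paper. The paper disposes of this lemma in two lines: it invokes the analogous statement for the $\nabla$-expansion, $\Reduce{\DefMat} \cong \Reduce{\left(\DMReductIS{\DefMat}\right)}$, already proved in \cite{marcelino2021}, and then observes that $\DMReductIS{\DefMat}$ and $\DMReductPP{\DefMat}$ are the \emph{same} $\DMSig$-matrix up to the identity map on $A \cup \{\aefvalue,\aetvalue\}$ (the two expansions differ only in the added unary operation, which disappears when one passes to $\DMSig$-reducts), so their reductions coincide. You instead give a direct, self-contained proof, essentially reconstructing the cited lemma: the map $\pi$ sending $a \mapsto [a]_{\LeibCong{\DefMat}}$, $\aetvalue \mapsto [\top^\mathbf{A}]$, $\aefvalue \mapsto [\bot^\mathbf{A}]$ is indeed a surjective strict matrix homomorphism onto the reduced matrix $\Reduce{\DefMat}$, and the AAL fact you invoke (the Leibniz operator commutes with inverse images along surjective strict homomorphisms, so the kernel of $\pi$ is $\LeibCong{\DMReductPP{\DefMat}}$) is standard. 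Your identification of where the real work lies is accurate: the mixed clauses of Definition~\ref{def:expcons} require $(a, \top^\mathbf{A} \land^\mathbf{A} a) \in \LeibCong{\DefMat}$ and its relatives, and these do follow by lifting the $\FDELogic$-interderivabilities $p \dashv\vdash p \land \top$, $\bot \dashv\vdash \bot \land p$, $\DMNeg\top \dashv\vdash \bot$, etc., through arbitrary contexts into the polynomial characterization of the Leibniz congruence of any model of $\FDELogic$; likewise, strictness at $\aefvalue$ is exactly where non-triviality plus $\bot \vdash_{\FDELogic} p$ is needed. Two small remarks: plain self-extensionality (congruentiality) of $\FDELogic$ suffices here --- \emph{full} self-extensionality is more than you need; and note that $A$ is not a $\DMSig$-subalgebra of $\DMPPExtend{\mathbf{A}}$ (the constants are reinterpreted as $\aefvalue, \aetvalue$), which is precisely why the constant and mixed cases cannot be skipped. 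The trade-off is the usual one: the paper's proof is shorter and leans on prior work, while yours is longer but makes explicit exactly which features of $\FDELogic$ and of the construction are being used.
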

\begin{proof}
We know from \cite[Lemma 4.6]{marcelino2021}
that $\Reduce{\DefMat} \cong \Reduce{\left(\DMReductIS{\DefMat}\right)}$.
Clearly, $\DMReductIS{\DefMat}$
and $\DMReductPP{\DefMat}$ are isomorphic
matrices under the identity mapping on $A \cup \{\aefvalue, \aetvalue\}$, and so are their reductions.
\end{proof}

\begin{corollary}
\label{coro:redequalscons}
Where $\DefMat$ is a 
non-trivial model  
  of
$\FDELogic$, we
have $\sequent_{\DefMat} = \sequent_{\DMReductPP{\DefMat}}$ and $\vdash_{\DefMat} \;=\; \vdash_{\DMReductPP{\DefMat}}$.
\end{corollary}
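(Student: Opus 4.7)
The plan is to chain together two standard facts from abstract algebraic logic with the preceding lemma. Since the statement is a direct corollary of Lemma~3.15 together with well-known properties of the Leibniz reduction, the proof should be essentially a three-step equality, and I expect no substantive obstacle.

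First, I would invoke the standard fact (already noted in the paper just before Lemma~3.15) that every matrix and its Leibniz reduction determine exactly the same \SetSet{} consequence relation, giving
\[
\sequent_{\DefMat} \;=\; \sequent_{\Reduce{\DefMat}}
\qquad\text{and}\qquad
\sequent_{\DMReductPP{\DefMat}} \;=\; \sequent_{\Reduce{(\DMReductPP{\DefMat})}}.
\]
Second, I would apply Lemma~3.15, which tells us $\Reduce{\DefMat} \cong \Reduce{(\DMReductPP{\DefMat})}$; since isomorphic matrices manifestly induce the same \SetSet{} consequence relation (a bijective strict homomorphism transports valuations in both directions while preserving designation and non-designation), we obtain
\[
\sequent_{\Reduce{\DefMat}} \;=\; \sequent_{\Reduce{(\DMReductPP{\DefMat})}}.
\]
Stringing the three equalities together yields $\sequent_{\DefMat} = \sequent_{\DMReductPP{\DefMat}}$.

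Finally, since $\vdash_{\DefMat}$ is by definition the \SetFmla{} companion of $\sequent_{\DefMat}$ (and likewise for $\DMReductPP{\DefMat}$), the equality of \SetFmla{} consequences follows immediately by restricting to singleton succedents, giving $\vdash_{\DefMat} \,=\, \vdash_{\DMReductPP{\DefMat}}$. The only point worth emphasizing in the write-up is the hypothesis that $\DefMat$ be a non-trivial model of $\FDELogic$, which is exactly what Lemma~3.15 requires; no further argument is needed.
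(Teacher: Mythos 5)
Your argument is correct and is exactly the one the paper intends: the corollary is meant to follow from the preceding lemma ($\Reduce{\DefMat} \cong \Reduce{(\DMReductPP{\DefMat})}$) combined with the standard fact, stated just before that lemma, that $\sequent_\DefMat = \sequent_{\Reduce{\DefMat}}$ and hence $\vdash_\DefMat = \vdash_{\Reduce{\DefMat}}$. No gaps; your chaining of the three equalities and the remark on \SetFmla{} companions is precisely the intended reasoning.
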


\begin{corollary}
\label{coro:conserv}
Where $\DefMat$ is a 
non-trivial model  
  of
$\FDELogic$, we have that $ \sequent_{\DMPPExtend{\DefMat}}$ 
is a conservative extension of $\sequent_{\DefMat}$ and
$\vdash_{\DMPPExtend{\DefMat}}$
is a conservative extension of $\vdash_{\DefMat}$.
\end{corollary}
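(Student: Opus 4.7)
The plan is to derive the result as a direct consequence of Corollary \ref{coro:redequalscons}, together with the trivial observation that on $\DMSig$-formulas the logic of $\DMPPExtend{\DefMat}$ coincides with the logic of its $\DMSig$-reduct $\DMReductPP{\DefMat}$. So the proof reduces to chaining two equalities.

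First I would spell out the easy direction: since $\DMPPExtend{\DefMat}$ is defined on $\DMoSig$ and its $\DMSig$-reduct $\DMReductPP{\DefMat}$ shares the same carrier, the same designated set $D\cup\{\aetvalue\}$, and the same interpretation of $\land,\lor,\DMNeg,\top,\bot$, every $h\in\Hom(\LangAlg{\DMoSig},\DMPPExtend{\mathbf{A}})$ restricts to an element of $\Hom(\LangAlg{\DMSig},\DMReductPP{\mathbf{A}})$, and conversely every valuation on $\DMReductPP{\mathbf{A}}$ extends (arbitrarily on $\cons$) to a valuation on $\DMPPExtend{\mathbf{A}}$. Hence, for any $\FmSetA\cup\FmSetB\subseteq\LangSet{\DMSig}$, we get $\FmSetA\sequent_{\DMPPExtend{\DefMat}}\FmSetB$ iff $\FmSetA\sequent_{\DMReductPP{\DefMat}}\FmSetB$, and analogously for the \SetFmla{} companions.

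Next I would invoke Corollary \ref{coro:redequalscons}, which yields $\sequent_{\DefMat}=\sequent_{\DMReductPP{\DefMat}}$ and $\vdash_{\DefMat}=\vdash_{\DMReductPP{\DefMat}}$. Composing the two equivalences gives, for $\DMSig$-formulas only, $\FmSetA\sequent_{\DMPPExtend{\DefMat}}\FmSetB$ iff $\FmSetA\sequent_{\DefMat}\FmSetB$ (and similarly for $\vdash$), which is precisely the agreement condition for conservative extension. Finally, since every $\DMSig$-formula is a $\DMoSig$-formula, the left-to-right inclusion on the full $\DMSig$-language gives that $\sequent_{\DMPPExtend{\DefMat}}$ extends $\sequent_{\DefMat}$ (and $\vdash_{\DMPPExtend{\DefMat}}$ extends $\vdash_{\DefMat}$), completing the definition.

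There is no real obstacle: the whole content has already been absorbed by Corollary \ref{coro:redequalscons}, and what remains is the essentially bookkeeping observation that adding the operation $\cons$ to a matrix does not change the semantics of formulas that do not mention $\cons$. The only thing to be careful about is the direction of the restriction/extension argument between valuations on $\DMPPExtend{\mathbf{A}}$ and on $\DMReductPP{\mathbf{A}}$, but this is immediate from the definition of $\DMReductPP{\DefMat}$ as the $\DMSig$-reduct of $\DMPPExtend{\DefMat}$.
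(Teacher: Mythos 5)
Your proof is correct and matches the paper's (implicit) argument exactly: the corollary is meant to follow from Corollary~\ref{coro:redequalscons} combined with the observation that a $\DMoSig$-matrix and its $\DMSig$-reduct determine the same consequence relation on $\DMSig$-formulas. One cosmetic remark: the extension of a valuation on $\DMReductPP{\mathbf{A}}$ to one on $\DMPPExtend{\mathbf{A}}$ is not ``arbitrary on $\cons$'' but uniquely determined by the variable assignment, since $\cons$ is already interpreted in $\DMPPExtend{\mathbf{A}}$ --- this does not affect the argument.
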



{\begin{corollary}\label{coro:conservsuper}
Let $\SuperBelnapCon$ be a super-Belnap logic
determined by a class of {non-trivial} models of $\FDELogic$.
Then $\SuperBelnapCon^\cons$ is a conservative
extension of $\SuperBelnapCon$.
In particular,
$\PPlogic$ is a conservative extension of $\FDELogic$.
\end{corollary}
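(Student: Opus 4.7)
The plan is to reduce the statement to Corollary~\ref{coro:conserv}, applied matrix-by-matrix to a defining class of $\SuperBelnapCon$. Fix any class $\mathcal{M}$ of non-trivial $\DMSig$-matrices that determines $\SuperBelnapCon$. Since every super-Belnap logic extends $\FDELogic$, each $\DefMat \in \mathcal{M}$ is also a non-trivial model of $\FDELogic$, so Corollary~\ref{coro:conserv} applies to every such $\DefMat$, yielding that $\vdash_{\DMPPExtend{\DefMat}}$ is a conservative extension of $\vdash_{\DefMat}$.

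The second step is to lift this matrix-wise conservativity to the logics determined by $\mathcal{M}$ and by $\DMPPExtend{\mathcal{M}}$. By the very definition of $\SuperBelnapCon^\cons$ we have $\vdash_{\SuperBelnapCon^\cons} \;=\; \bigcap_{\DefMat \in \mathcal{M}} \vdash_{\DMPPExtend{\DefMat}}$, while $\SuperBelnapCon$ itself equals $\bigcap_{\DefMat \in \mathcal{M}} \vdash_{\DefMat}$. Given any $\FmSetA \cup \{\FmB\} \subseteq \LangSet{\DMSig}$, chaining equivalences gives: $\FmSetA \vdash_{\SuperBelnapCon^\cons} \FmB$ iff $\FmSetA \vdash_{\DMPPExtend{\DefMat}} \FmB$ for every $\DefMat \in \mathcal{M}$; iff $\FmSetA \vdash_{\DefMat} \FmB$ for every $\DefMat \in \mathcal{M}$ (invoking Corollary~\ref{coro:conserv} at the middle step); iff $\FmSetA \mathrel{\SuperBelnapCon} \FmB$. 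Since $\DMoSig \supseteq \DMSig$ and $\SuperBelnapCon^\cons$ is defined over $\DMoSig$, this is exactly the required conservativity.

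For the specialization to $\PPlogic$, I would instantiate the result with $\SuperBelnapCon \;=\; \FDELogic$, characterised by the single non-trivial matrix $\langle \FDEAlg, \Upset{\both} \rangle$ (Example~\ref{ex:fde}). A direct inspection of Definition~\ref{def:expcons} applied to $\FDEAlg$, identifying the added elements $\aefvalue, \aetvalue$ with $\efvalue, \etvalue$, shows that $\DMPPExtend{\langle \FDEAlg, \Upset{\both} \rangle}$ is precisely $\langle \PPSix, \Upset{\both} \rangle$, whose associated \SetFmla{} logic is $\PPlogic$ by Theorem~\ref{the:order-preserving-six-generated}.

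I do not anticipate any genuine obstacle: the corollary is essentially just a packaging of Corollary~\ref{coro:conserv} together with the elementary observation that conservativity is preserved under intersections of logics. The one small caveat worth checking is that the argument does not depend on the particular choice of generating class $\mathcal{M}$, but this is immediate, since both sides of the displayed biconditionals depend only on the logics $\SuperBelnapCon^\cons$ and $\SuperBelnapCon$ themselves, not on how they are presented.
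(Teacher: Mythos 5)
Your proof is correct and follows essentially the same route as the paper's, which simply cites Corollary~\ref{coro:conserv} together with the identification of $\langle \PPAlg_6,\Upset{\both}\rangle$ with $\DMPPExtend{\langle\FDEAlg,\Upset{\both}\rangle}$; you merely spell out the intersection argument that the paper leaves implicit. The only nitpick is that $\SuperBelnapCon^\cons$ is by definition the intersection over \emph{all} non-trivial models of $\SuperBelnapCon$, not just over your chosen class $\mathcal{M}$, but your chain of equivalences goes through unchanged once the `if' direction is routed through the full class of non-trivial models (using that $\FmSetA \mathrel{\SuperBelnapCon} \FmB$ implies $\FmSetA \vdash_{\DefMat'} \FmB$, and hence $\FmSetA \vdash_{\DMPPExtend{\DefMat'}} \FmB$, for every non-trivial model $\DefMat'$).
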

\begin{proof}
It follows from Corollary~\ref{coro:conserv}
and the fact that $\PPlogic$ is characterized
by the matrix $\langle \PPAlg_6, \Upset{\both} \rangle$,
which is obtained from the matrix $\langle \FDEAlg, \Upset{\both} \rangle$ by the construction introduced in
Definition~\ref{def:expcons}.
\end{proof}
}


%

 

{\begin{corollary}
Let $\DefMat_1$
and $\DefMat_2$
be 
non-trivial
models of $\FDELogic$. 
If
$\sequent_{\DMPPExtend{\DefMat}_1} \;=\; \sequent_{\DMPPExtend{\DefMat}_2}$, then
$\sequent_{\DefMat_1} \,=\; \sequent_{\DefMat_2}$;
analogously,
if $\vdash_{\DMPPExtend{\DefMat}_1} \;=\; \vdash_{\DMPPExtend{\DefMat}_2}$, then
$\vdash_{\DefMat_1} \,=\; \vdash_{\DefMat_2}$.
\end{corollary}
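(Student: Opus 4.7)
The plan is to derive this result as an immediate consequence of the conservativity established in Corollary~\ref{coro:conserv}, by restricting attention to the smaller language $\LangSet{\DMSig}$. Since $\DefMat_1$ and $\DefMat_2$ are non-trivial models of $\FDELogic$, Corollary~\ref{coro:conserv} applies to each of them, yielding that $\sequent_{\DMPPExtend{\DefMat}_i}$ is a conservative extension of $\sequent_{\DefMat_i}$ for $i \in \{1,2\}$.

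The key step is then to fix arbitrary $\FmSetA, \FmSetB \subseteq \LangSet{\DMSig}$ (that is, sets of formulas not containing the connective~$\cons$) and chain the biconditionals: by conservativity applied to $\DefMat_1$, we have $\FmSetA \sequent_{\DefMat_1} \FmSetB$ iff $\FmSetA \sequent_{\DMPPExtend{\DefMat}_1} \FmSetB$; by the assumed equality of the enriched \SetSet{} consequences, this is equivalent to $\FmSetA \sequent_{\DMPPExtend{\DefMat}_2} \FmSetB$; and by conservativity applied to $\DefMat_2$, the latter is equivalent to $\FmSetA \sequent_{\DefMat_2} \FmSetB$. Since $\FmSetA$ and $\FmSetB$ were arbitrary subsets of $\LangSet{\DMSig}$, this establishes $\sequent_{\DefMat_1} = \sequent_{\DefMat_2}$.

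The $\SetFmla{}$ case is handled by the very same chain of equivalences, restricted to succedents that are singletons, again leveraging the $\SetFmla{}$ half of Corollary~\ref{coro:conserv}. There is no real obstacle here: the content of the argument is entirely shifted onto the previously proved conservativity result, and the proof amounts to nothing more than observing that two consequence relations that agree on an enriched language must in particular agree on any sublanguage on which each of them is conservative over a reduct.
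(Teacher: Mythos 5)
Your proof is correct and follows exactly the route the paper intends: the corollary is stated without proof precisely because it is the immediate consequence of the conservativity in Corollary~\ref{coro:conserv} that you spell out, namely chaining $\FmSetA \sequent_{\DefMat_1} \FmSetB$ iff $\FmSetA \sequent_{\DMPPExtend{\DefMat}_1} \FmSetB$ iff $\FmSetA \sequent_{\DMPPExtend{\DefMat}_2} \FmSetB$ iff $\FmSetA \sequent_{\DefMat_2} \FmSetB$ for $\FmSetA,\FmSetB \subseteq \LangSet{\DMSig}$, which suffices since both $\sequent_{\DefMat_1}$ and $\sequent_{\DefMat_2}$ live on that language. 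The \SetFmla{} case is handled identically, as you note.
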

}



\begin{corollary}
\label{coro:embdfunc}
Let $\SuperBelnapCon_1$ and $\SuperBelnapCon_2$
be super-Belnap logics. Then
$\SuperBelnapCon_1 \; \subseteq \; \SuperBelnapCon_2$
 iff $\SuperBelnapCon_1^\cons \; \subseteq \; \SuperBelnapCon_2^\cons$.   
\end{corollary}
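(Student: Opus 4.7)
The plan is to prove the two directions separately, leveraging the fact that the map $\vdash \mapsto \vdash^\cons$ is an order-preserving embedding by routine monotonicity plus conservativity.

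For the forward implication, I would argue contrapositively on the class of matrices involved. Assume $\vdash_1 \,\subseteq\, \vdash_2$, and let $\DefMat$ be any non-trivial model of $\vdash_2$; by the assumed inclusion, every consequence of $\vdash_1$ is also a consequence of $\vdash_2$ and hence of $\vdash_{\DefMat}$, so $\DefMat$ is also a model of $\vdash_1$. Thus the defining class
\[
\{\DMPPExtend{\DefMat} \mid \DefMat \text{ is a non-trivial model of } \vdash_2\}
\]
is included in the corresponding class for $\vdash_1$. Since the logic determined by a smaller class of matrices extends the logic determined by a larger class, we obtain $\vdash_1^\cons \,\subseteq\, \vdash_2^\cons$.

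For the converse, I would use Corollary~\ref{coro:conservsuper}: both $\vdash_1^\cons$ and $\vdash_2^\cons$ are conservative extensions of $\vdash_1$ and $\vdash_2$ respectively over the signature $\DMSig$. Suppose $\vdash_1^\cons \,\subseteq\, \vdash_2^\cons$, and take $\FmSetA \cup \{\FmA\} \subseteq \LangSet{\DMSig}$ with $\FmSetA \vdash_1 \FmA$. By conservativity of $\vdash_1^\cons$ over $\vdash_1$, we have $\FmSetA \vdash_1^\cons \FmA$; by the hypothesis, $\FmSetA \vdash_2^\cons \FmA$; and by conservativity of $\vdash_2^\cons$ over $\vdash_2$, we conclude $\FmSetA \vdash_2 \FmA$. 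Hence $\vdash_1 \,\subseteq\, \vdash_2$.

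There is no serious obstacle here: once conservativity (Corollary~\ref{coro:conservsuper}) is in place, the argument is purely formal and the statement follows by unwinding definitions. The forward direction is essentially the monotonicity of the ``logic-of-a-class-of-matrices'' operator, and the backward direction is exactly what conservativity is designed to give. The same proof works mutatis mutandis for the \SetSet{} variant, should one wish to state it.
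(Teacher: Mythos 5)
Your proof is correct, and the backward direction is exactly the paper's: conservativity (Corollary~\ref{coro:conservsuper}) applied to $\SuperBelnapCon_1^\cons$ and $\SuperBelnapCon_2^\cons$ to transfer $\FmSetA \vdash_1 \FmB$ down to $\FmSetA \vdash_2 \FmB$. The forward direction, however, takes a slightly different route. The paper argues via reduced matrix models: from $\SuperBelnapCon_1 \subseteq \SuperBelnapCon_2$ it deduces $\ReducedMatrices(\SuperBelnapCon_2) \subseteq \ReducedMatrices(\SuperBelnapCon_1)$, applies $(\cdot)^\cons$ to these classes, and concludes; this implicitly uses the fact that each $\SuperBelnapCon_i^\cons$ is also determined by the $\cons$-expansions of the \emph{reduced} models, not just by the class of all non-trivial models appearing in its definition. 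You instead apply the antitonicity argument directly to the defining class $\{\DMPPExtend{\DefMat} \mid \DefMat \text{ a non-trivial model of } \SuperBelnapCon_i\}$, observing that a non-trivial model of $\SuperBelnapCon_2$ is automatically a non-trivial model of $\SuperBelnapCon_1$ (non-triviality being a property of the matrix alone). This matches the definition of $(\cdot)^\cons$ on the nose and so avoids the detour through $\ReducedMatrices$; it is, if anything, the more economical of the two arguments. Both versions share the same reliance on Corollary~\ref{coro:conservsuper}, and hence on the tacit assumption that every super-Belnap logic under consideration is determined by a class of non-trivial models of $\FDELogic$, so your proposal introduces no gap that is not already present in the paper.
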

\begin{proof}
From the left to the right, assuming
$\SuperBelnapCon_1 \;\subseteq\; \SuperBelnapCon_2$
gives that $\ReducedMatrices(\SuperBelnapCon_2) \subseteq \ReducedMatrices(\SuperBelnapCon_1)$, so $(\ReducedMatrices(\SuperBelnapCon_2))^\cons \subseteq (\ReducedMatrices(\SuperBelnapCon_1))^\cons$, which clearly entails that $\SuperBelnapCon_1^\cons \; \subseteq \; \SuperBelnapCon_2^\cons$.
Conversely, suppose that $\SuperBelnapCon_1^\cons \; \subseteq \; \SuperBelnapCon_2^\cons$
and that $\FmSetA \SuperBelnapCon_1 \FmB$.
Hence $\FmSetA \SuperBelnapCon_1^\cons \FmB$, and then
$\FmSetA \SuperBelnapCon_2^\cons \FmB$, which 
gives $\FmSetA \SuperBelnapCon_2 \FmB$
by Corollary \ref{coro:conservsuper}.
\end{proof}

\begin{corollary}
\label{coro:embdsupbelnap}
The map given by $\SuperBelnapCon \; \mapsto \; \SuperBelnapCon^\cons$ is an embedding (that is, an injective homomorphism) of the
lattice of super-Belnap logics into the lattice
of extensions of $\PPlogic$. 
This, the latter lattice
has (at least) the cardinality of the
continuum.
\end{corollary}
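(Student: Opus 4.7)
The plan is to establish the statement in two components: first, that $(\cdot)^\cons$ is an injective lattice homomorphism, and second, that the lattice of super-Belnap logics has cardinality at least that of the continuum.

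Injectivity is immediate from Corollary~\ref{coro:embdfunc}: if $\SuperBelnapCon_1^\cons = \SuperBelnapCon_2^\cons$, applying the given biconditional in both directions yields $\SuperBelnapCon_1 \subseteq \SuperBelnapCon_2$ and $\SuperBelnapCon_2 \subseteq \SuperBelnapCon_1$, hence equality. For the homomorphism property, note that meets in both lattices coincide with intersections of consequence relations, so preservation of meets reduces to the identity $(\SuperBelnapCon_1 \cap \SuperBelnapCon_2)^\cons = \SuperBelnapCon_1^\cons \cap \SuperBelnapCon_2^\cons$. The inclusion $\subseteq$ is immediate from monotonicity; for $\supseteq$ one unfolds the definition of $(\cdot)^\cons$, observing that $\SuperBelnapCon_1^\cons \cap \SuperBelnapCon_2^\cons$ is determined by the union of the two classes $\{\DMPPExtend{\DefMat} \mid \DefMat \text{ is a non-trivial model of } \SuperBelnapCon_i\}$ (for $i=1,2$), and invoking Corollary~\ref{coro:redequalscons} to transfer the equivalence of determining classes from the $\DMSig$-side to the $\DMoSig$-side. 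Preservation of joins follows by a dual argument, combined with the order-embedding already established.

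For the cardinality bound, the plan is to appeal to the known result that the lattice of super-Belnap logics itself has cardinality at least $2^{\aleph_0}$ (cf.\ \cite{rivieccio2012} and references therein); composing this with the injection above immediately yields the corresponding lower bound for the lattice of extensions of $\PPlogic$.

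The principal obstacle I foresee lies in the $\supseteq$ direction of meet preservation: an arbitrary non-trivial model of $\SuperBelnapCon_1 \cap \SuperBelnapCon_2$ need not itself be a model of either $\SuperBelnapCon_1$ or $\SuperBelnapCon_2$, so one must carefully leverage the reduced-matrix picture (Corollary~\ref{coro:redequalscons}) together with the conservativity established in Corollary~\ref{coro:conserv} to bridge the gap between the large determining class on the left and the union of the two smaller classes on the right.
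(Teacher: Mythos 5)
Your proposal agrees with the paper on the two ingredients that actually carry the corollary: injectivity of $(\cdot)^\cons$ follows from the order-reflection in Corollary~\ref{coro:embdfunc}, and the cardinality bound follows by composing this injection with the fact that there are continuum-many super-Belnap logics (\cite[Theorem 4.13]{rivieccio2012}). The paper's own proof consists of exactly these two citations and nothing more; in particular, it does not separately verify preservation of meets and joins in the ambient lattices, implicitly reading ``embedding'' as the order-embedding delivered by Corollary~\ref{coro:embdfunc}. Since only injectivity is needed for the cardinality conclusion, your proposal is sound as far as the stated consequence goes.

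Where you go beyond the paper --- the attempted verification that $(\cdot)^\cons$ preserves binary meets and joins --- there is a genuine gap, and it is the one you yourself flag. The inclusion $(\SuperBelnapCon_1 \cap \SuperBelnapCon_2)^\cons \;\subseteq\; \SuperBelnapCon_1^\cons \cap \SuperBelnapCon_2^\cons$ is indeed immediate from monotonicity, but for the converse you would need every matrix $\DMPPExtend{\DefMat}$ with $\DefMat$ a non-trivial model of $\SuperBelnapCon_1 \cap \SuperBelnapCon_2$ to validate everything in $\SuperBelnapCon_1^\cons \cap \SuperBelnapCon_2^\cons$, and Corollary~\ref{coro:redequalscons} does not give you this: it only identifies $\sequent_{\DefMat}$ with $\sequent_{\DMReductPP{\DefMat}}$ for a single matrix, and says nothing about decomposing a model of the intersection logic into models of the two components. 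The remark that ``preservation of joins follows by a dual argument'' is also suspect, since joins in these lattices of logics are not unions but the logics generated by unions, so no literal dualization of the meet case is available. If you want the full lattice-homomorphism claim you need a genuinely new argument (or you should weaken the claim to order-embedding, which is what the paper's own citation actually establishes and all that the cardinality statement requires).
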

\begin{proof}
By Corollary \ref{coro:embdfunc} and
\cite[Theorem 4.13]{rivieccio2012}.
\end{proof}


\vspace{-2mm}
\subsection{
On the recovery of classical reasoning}
\label{subsec:rec}

The following result shows
that paradefinite extensions of
$\FDELogic$, when extended with
$\cons$ in the way we propose,
result in logics which are at once \textbf{C}-systems and
\textbf{D}-systems. This result applies,
in particular, to the logic $\PPlogic$.

\begin{proposition}
Let $\mathcal{M}$ be a class
of 
{non-trivial models}
of $\FDELogic$
that determines a paradefinite logic. Then
the \SetSet{} logic determined by
$\DMPPExtend{\mathcal{M}}$ is
a \textbf C-system and
a \textbf D-system.
\end{proposition}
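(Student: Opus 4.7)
The plan is to verify, in turn, each defining property of a \textbf{C}-system (paraconsistency and gentle explosivity witnessed by $\bigcirc(p)=\{\cons p\}$) and of a \textbf{D}-system (paracompleteness and gentle implosivity witnessed by $\CompSet(p)=\{\DMNeg\cons p\}$), for the \SetSet{} logic $\sequent:=\sequent_{\DMPPExtend{\mathcal{M}}}$. The base logic demanded by the definitions may in each case be taken to be the $\DMNeg$-free fragment of $\sequent$ itself, so the agreement clause holds trivially. Paradefiniteness of $\sequent$ will be inherited from that of $\sequent_\mathcal{M}$ via the \SetSet{} twin of Corollary~\ref{coro:conservsuper}: given $\DefMat=\langle\mathbf{A},D\rangle\in\mathcal{M}$ together with a valuation $h$ on $\mathbf{A}$ witnessing, say, $h(p),h(\DMNeg p)\in D$ and $h(q)\notin D$, reading $h$ as a valuation on $\DMPPExtend{\mathbf{A}}$ preserves the relevant memberships since $h(p),h(q)\in A$ and the designated set of $\DMPPExtend{\DefMat}$ intersects $A$ precisely in $D$.

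For gentle explosivity, I would first derive the universal clause $\cons\Fm,\Fm,\DMNeg\Fm\sequent\varnothing$ from equation \textbf{(PP4)}: under any valuation into $\DMPPExtend{\mathbf{A}}$, if all three formulas mapped into the lattice filter $D\cup\{\aetvalue\}$, so would their meet, which equals $\bot^{\DMPPExtend{\mathbf{A}}}=\aefvalue\notin D\cup\{\aetvalue\}$, a contradiction. For the non-triviality clauses, the witness $\Fm:=p$ suffices: in any chosen $\DMPPExtend{\DefMat}\in\DMPPExtend{\mathcal{M}}$, the valuations $h_1(p):=\aetvalue$ and $h_2(p):=\aefvalue$ yield $h_1(\cons p)=h_1(p)=\aetvalue$ and $h_2(\cons p)=h_2(\DMNeg p)=\aetvalue$ respectively, all in $D\cup\{\aetvalue\}$.

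Dually, the universal clause $\sequent\DMNeg\cons\Fm,\Fm,\DMNeg\Fm$ of gentle implosivity will be reduced to Lemma~\ref{lem:psident}(1): any valuation $h$ satisfies $h(\DMNeg\cons\Fm\lor\Fm\lor\DMNeg\Fm)=\top^{\DMPPExtend{\mathbf{A}}}=\aetvalue$, and a brief case analysis on the join operation of Definition~\ref{def:expcons} shows that $\aetvalue$ occurs as a join only when one of the joinands is already $\aetvalue\in D\cup\{\aetvalue\}$. The non-triviality clauses are witnessed, again taking $\Fm:=p$, by $h_3(p):=\aefvalue$ (so $h_3(\DMNeg\cons p)=h_3(p)=\aefvalue$) and $h_4(p):=\aetvalue$ (so $h_4(\DMNeg\cons p)=h_4(\DMNeg p)=\aefvalue$).

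The only genuinely delicate point, I expect, is the transfer of paradefiniteness from $\sequent_\mathcal{M}$ to $\sequent$, which amounts to lifting the valuation-extension argument behind Corollary~\ref{coro:conservsuper} to the \SetSet{} setting; once this is in hand, all remaining steps reduce to applying equations already validated in $\PPVar$ together with short case analyses on the behaviour of the two new truth-values $\aefvalue,\aetvalue$ introduced by Definition~\ref{def:expcons}.
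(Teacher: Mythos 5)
Your proposal is correct and, in substance, it is the argument the paper sketches: paradefiniteness is inherited from $\sequent_\mathcal{M}$ because $\DMSig$-countermodels persist when a matrix $\DefMat$ is replaced by $\DMPPExtend{\DefMat}$ (this is exactly Corollary~\ref{coro:conserv}, which is already stated at the \SetSet{} level, so no further lifting is needed), the universal clauses of gentle explosivity and implosivity follow from \textbf{(PP4)} and Lemma~\ref{lem:psident}(1) (or, even more directly, from the observation that $\cons$ only takes the values $\aefvalue,\aetvalue$), and the non-triviality clauses are witnessed by valuations sending $p$ to $\aefvalue$ or $\aetvalue$ --- precisely the ``suitable valuations'' the paper alludes to. The one place where you genuinely diverge is the agreement clause in the definition of a \textbf{C}-/\textbf{D}-system: you discharge it by taking the base logic $\sequent_2$ to be the $\DMNeg$-free fragment of $\sequent_{\DMPPExtend{\mathcal{M}}}$ itself, which is admissible under the paper's literal wording but renders the clause vacuous. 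The paper instead intends the base to be Classical Logic: it appeals to the coincidence of the negation-free fragments of $\CL$ and $\FDELogic$, combined with conservativity of $\sequent_{\DMPPExtend{\mathcal{M}}}$ over $\sequent_\mathcal{M}$, to conclude that the new logic agrees with $\CL$ on $\DMNeg$-free statements. That reading carries the actual content of being a \textbf{C}-system (recovery of a prior logic of interest), so you should either adopt it or at least note that your choice of $\sequent_2$ is the degenerate one; everything else in your write-up stands as is, including the small implicit use of the fact that designated sets of non-trivial models of $\FDELogic$ are lattice filters, which is easily checked.
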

\begin{proof}
That paradefiniteness is preserved when passing from
$\mathcal{M}$ to $\DMPPExtend{\mathcal{M}}$ follows
by Corollary \ref{coro:conserv}.
As it is well-known that the negation-free fragments of $\CL$
and $\FDELogic$ coincide,
by taking $\cons$ as the consistency connective
and 
$\DMNeg\cons$ 
as the determinedness connective,
we may straightforwardly use the values
$\aefvalue$ and $\aetvalue$ to build suitable valuations for showing that
the logic determined by~$\DMPPExtend{\mathcal{M}}$
is at once a \textbf C-system and a \textbf D-system.\qedhere
\end{proof}

\begin{corollary}
$\PPlogic$ is a \textbf{C}-system
and a \textbf{D}-system.
\end{corollary}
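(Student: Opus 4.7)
The plan is to apply the preceding Proposition to the singleton class $\mathcal{M} \SymbDef \{\langle \FDEAlg, \Upset{\both} \rangle\}$. First, I would verify the two hypotheses of that Proposition. The matrix $\langle \FDEAlg, \Upset{\both} \rangle$ is a non-trivial model of $\FDELogic$: indeed, by Example~\ref{ex:fde} it characterizes $\FDELogic$, and it is clearly non-trivial since $\fvalue \notin \Upset{\both}$. Moreover, the logic it determines, namely $\FDELogic$ itself, is $\DMNeg$-paradefinite, as observed in the example following the definition of paradefiniteness (exploiting the fact that $\neither$ and $\both$ are fixpoints of $\DMNeg^{\FDEAlg}$).

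Next, I would identify the resulting class $\DMPPExtend{\mathcal{M}}$. By Definition~\ref{def:expcons} and the comparison between Figures~\ref{fig:de_morgan} and~\ref{fig:pp_algebra}, we have $\DMPPExtend{\FDEAlg} = \ISAlg_6$ (at the $\DMnSig$-level), so as a $\DMoSig$-matrix we have $\DMPPExtend{\langle \FDEAlg, \Upset{\both} \rangle} = \langle \PPSix, \Upset{\both} \rangle$. By Theorem~\ref{the:order-preserving-six-generated}, this matrix characterizes precisely $\PPlogicMC{}$, whose $\SetFmla{}$ companion is $\PPlogic$. Hence the preceding Proposition yields that $\PPlogicMC{}$ is a \textbf{C}-system and a \textbf{D}-system.

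Finally, I would note that the \textbf{C}- and \textbf{D}-system properties transfer to the \SetFmla{} companion essentially for free: the explosiveness and implosiveness conditions in the definitions reduce to single-succedent entailments ($\bigcirc(\Fmm),\Fmm,\DMNeg\Fmm \sequent \varnothing$ is equivalent, via a disjunction, to the corresponding \SetFmla{} derivation with any chosen conclusion, and analogously for $\CompSet$), while the non-derivations witnessing paraconsistency/paracompleteness and the `gentle' side-conditions are preserved by construction of the companion. Taking $\cons$ as the consistency connective and $\DMNeg\cons$ as the determinedness connective, the witnesses in $\DMPPExtend{\mathcal{M}}$ already live in the \SetFmla{} fragment.

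The main obstacle is essentially cosmetic rather than mathematical: the previous Proposition is phrased for \SetSet{} logics while $\PPlogic$ is defined as a \SetFmla{} logic, so some care is needed to justify the transfer to the companion. Once this is observed, the corollary is an immediate instantiation.
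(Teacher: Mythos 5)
Your proof is correct and follows exactly the route the paper intends: the corollary is stated without proof as an immediate instantiation of the preceding proposition with $\mathcal{M} = \{\langle\FDEAlg,\Upset{\both}\rangle\}$, using Theorem~\ref{the:order-preserving-six-generated} to identify the \SetSet{} logic determined by $\DMPPExtend{\mathcal{M}} = \{\langle\PPSix,\Upset{\both}\rangle\}$ as $\PPlogicMC{}$. Your additional remarks on transferring the \textbf{C}/\textbf{D}-system properties to the \SetFmla{} companion address a point the paper silently glosses over (its definitions are stated only for \SetSet{} logics), and they are sound here since $\bigcirc(\DefProp)$ and $\CompSet(\DefProp)$ are singletons and the logic has a disjunction.
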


A unary connective $\copyright$ is said to constitute
a \emph{classical negation} in
a \SetFmla{} logic~$\vdash$ based on~$\Sigma$
when, for all $\FmA,\FmB \in \LangSet{\Sigma}$, we have that
(i): $\FmSetA, \FmA \vdash \FmB$
and $\FmSetA, \copyright(\FmA) \vdash \FmB$
imply $\FmSetA \vdash \FmB$,
and (ii): $\FmA,\copyright(\FmA)\vdash\FmB$.
In case $\vdash$ has a disjunction,
we may equivalently replace (i) by (iii):
$\varnothing \vdash \FmA \lor \copyright(\FmA)$ in this characterization. We prove in what follows that in $\PPlogic$ no composite unary connective may be defined that simultaneously satisfies both (i) and (iii). Since $\PPlogic$ has a disjunction, this entails that a classical negation is not definable in this logic.

\begin{proposition}
\label{prop:no_classical_neg}
There is no unary formula $\FmA \in \LangSet{\DMoSig}$ such that
$p, \FmA(p) \vdash_{\PPlogic} q$
and \mbox{$\varnothing \vdash_{\PPlogic} p \lor \FmA(p)$}.
\end{proposition}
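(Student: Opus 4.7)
My plan is to invoke the matrix characterization $\PPlogic = \vdash_{\langle \PPSix, \Upset{\both}\rangle}$ established in Theorem~\ref{the:order-preserving-six-generated}, and argue by contradiction. Suppose such a unary formula $\FmA(p)$ exists, and set $f \coloneqq \FmA^{\PPSix}$, the induced unary term operation on the six-element carrier. Since the variable $q$ does not occur in $p$ nor in $\FmA(p)$, the hypothesis $p, \FmA(p) \vdash_{\PPlogic} q$ is equivalent to the non-existence of a $\PPSix$-valuation simultaneously sending $p$ and $\FmA(p)$ to $\Upset{\both}$; that is, $f(a) \notin \Upset{\both}$ whenever $a \in \Upset{\both}$. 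Meanwhile $\varnothing \vdash_{\PPlogic} p \lor \FmA(p)$ unfolds to $a \lor^{\PPSix} f(a) \in \Upset{\both}$ for every $a \in \SixSet$. A short case analysis on the Hasse diagram of $\PPSix$ in Figure~\ref{fig:pp_algebra} shows that for each $a$ in the non-designated set $\{\fvalue, \neither, \efvalue\}$ the join $a \lor^{\PPSix} f(a)$ can lie in $\Upset{\both}$ only if $f(a)$ already does; in particular, $f(\neither) \in \Upset{\both} = \{\both, \tvalue, \etvalue\}$.

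Next, I would exploit two structural features of $\PPSix$ that any term operation must respect. First, the set $\{\efvalue, \neither, \etvalue\}$ is the carrier of the subalgebra $\PPAlg_3$ of $\PPSix$, so $f$ necessarily restricts to this subalgebra; hence $f(\neither) \in \{\efvalue, \neither, \etvalue\}$, and intersecting with the previous paragraph pins down $f(\neither) = \etvalue$. Second, the permutation $\sigma$ of $\SixSet$ that swaps $\both$ with $\neither$ and fixes the remaining four elements is an automorphism of $\PPSix$: preservation of $\land, \lor, \DMNeg$ is routine for any De Morgan algebra in which $\both, \neither$ are the two $\DMNeg$-fixed elements of a diamond, while preservation of $\cons$ is immediate from $\cons^{\PPSix}\both = \cons^{\PPSix}\neither = \efvalue$. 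Since every term operation commutes with every automorphism, we conclude $f(\both) = f(\sigma(\neither)) = \sigma(f(\neither)) = \sigma(\etvalue) = \etvalue$. But then $f(\both) = \etvalue \in \Upset{\both}$, contradicting the first condition (which required $f(\both) \notin \Upset{\both}$, as $\both \in \Upset{\both}$).

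The main obstacle I anticipate is verifying that $\sigma$ is indeed an automorphism of the \emph{full} PP-algebra $\PPSix$ and not just of its De~Morgan reduct, since in principle the presence of $\cons$ could destroy the $\both$-$\neither$ symmetry. However, as noted above, $\cons^{\PPSix}$ collapses the entire middle diamond $\{\fvalue, \both, \neither, \tvalue\}$ to $\efvalue$, so this potential difficulty evaporates upon inspection. Everything else reduces to finite computation on a six-element algebra.
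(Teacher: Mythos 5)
Your proof is correct, but it follows a genuinely different route from the paper's. The paper exploits the \emph{order-preserving} (equational) reading of the two hypotheses directly: $\varnothing \vdash_{\PPlogic} p \lor \FmA(p)$ forces $a \lor^{\PPSix} \FmA^{\PPSix}(a) = \etvalue$ and $p, \FmA(p) \vdash_{\PPlogic} q$ (with $q$ fresh, instantiated at the bottom) forces $a \land^{\PPSix} \FmA^{\PPSix}(a) = \efvalue$, so $\FmA^{\PPSix}(a)$ would have to be a \emph{Boolean complement} of $a$ for every $a$ --- absurd, since only $\efvalue$ and $\etvalue$ are complemented in $\PPSix$. That argument is shorter and isolates the conceptual obstruction (non-complementedness of the lattice). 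You instead work with the designated-set semantics of $\langle \PPSix, \Upset{\both}\rangle$, extracting only the weaker-looking conditions ``$f(a)$ undesignated whenever $a$ is designated'' and ``$a \lor f(a)$ designated for all $a$,'' and then close the gap with two pieces of universal algebra: the restriction of term operations to the subalgebra on $\{\efvalue,\neither,\etvalue\}$ (pinning down $f(\neither)=\etvalue$) and the automorphism of $\PPSix$ swapping $\both$ and $\neither$ (transporting this to $f(\both)=\etvalue$, contradicting undesignatedness). Both steps check out --- in particular your verification that $\cons^{\PPSix}$ collapses the middle diamond to $\efvalue$, so the swap really is an automorphism of the full PP-algebra and not just of its De Morgan reduct, and your use of the subalgebra is genuinely needed, since the swap alone would still permit $f(\neither)=\both$. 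What your approach buys is a proof that never appeals to the meet condition $a \land f(a) = \efvalue$ at all, plus two reusable structural facts about $\PPSix$; what it costs is length and a finite case analysis where the paper has a one-line conceptual punchline.
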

\begin{proof}
Let $\FmA \in \LangSet{\DMoSig}$ be
a unary formula
and
suppose that
$p, \FmA(p) \vdash_{\PPlogic} q$
and $\varnothing \vdash_{\PPlogic} p \lor \FmA(p)$. Then,
since $\PPlogic$ is an order-preserving logic,
we have, for all
$h \in \Hom(\LangAlg{\DMoSig}, \PPSix)$, that
$h(p) \lor^{\PPSix} \FmA^{\PPSix}(h(p)) = \etvalue$ 
(the greatest element of $\PPSix$)
and $h(p) \land^{\PPSix} \FmA^{\PPSix}(h(p)) = \efvalue$
(the least element of $\PPSix$)
, which is to say that
$\FmA^\PPSix(a)$ is a Boolean complement of
$a$, for every element $a$
of $\PPSix$. This is absurd, since, by the definition of $\land^\PPSix$ and $\lor^\PPSix$, only $\etvalue$
and $\efvalue$ have Boolean complements in $\PPSix$.
\end{proof}

On what concerns the previous result, it is worth observing that a similar phenomenon, concerning the undefinability of a classical negation, is observed concerning several \textbf{LFI}s and \textbf{LFU}s with a modal character (\cite[Theorem 6.1.2]{lahav2017}) built on top of complete distributive lattices.

As argued in \cite{jmarcos2005}, the ability to recover
negation-consistent (resp.\ negation-determined) reasoning is the most fundamental feature
of \textbf{LFI}s (resp.\ \textbf{LFU}s). 
This feature may be expressed
in terms of a convenient \emph{Derivability Adjustment Theorem}
(DAT) with respect to Classical Logic, which states, in the present case, that
classical reasoning may be fully recovered as long as premises restoring the lost `perfection' and establishing the `classicality' of a certain set of formulas are available. 
The result presented below is a DAT that applies to any
super-Belnap logic
determined by a class of {non-trivial} models of $\FDELogic$
extended with the perfection operator $\cons$ considered in this paper. As a corollary, we will, in particular, have a DAT for the logics $\PPlogicMC{}$ and $\PPlogic$.

\begin{theorem}
\label{the:datgeneral}
Let $\mathcal{M}$ be a class of 
{non-trivial}
models of
$\FDELogic$. Then, for all $\FmSetA, \FmSetB \subseteq \LangSet{\DMSig}$, we have
\[
\FmSetA \;\sequent_\CL\; \FmSetB \; \text{ if{f} } \; \FmSetA, \cons \DefProp_1,\ldots,\cons\DefProp_n \;\sequent_{\DMPPExtend{\mathcal{M}}}\; \FmSetB,
\]
with $\{\DefProp_1,\ldots,\DefProp_n\} = \Props(\FmSetA \cup \FmSetB)$.
\end{theorem}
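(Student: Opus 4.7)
The plan is to exploit the fact that the two fresh elements $\aefvalue$ and $\aetvalue$ introduced in Definition~\ref{def:expcons} form a two-element subalgebra of $\DMPPExtend{\mathbf{A}}$ on which the $\DMSig$-operations behave classically, and that the assumptions $\cons\DefProp_i$ force every propositional variable to be evaluated inside this Boolean core.

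First, I would establish the key semantic fact: for any non-trivial $\DefMat=\langle\mathbf{A},D\rangle$ model of $\FDELogic$ and any $h\in\Hom(\LangAlg{\DMoSig},\DMPPExtend{\mathbf{A}})$, we have $h(\cons p)\in D\cup\{\aetvalue\}$ iff $h(p)\in\{\aefvalue,\aetvalue\}$. This uses that $\cons^{\DMPPExtend{\mathbf{A}}}$ takes values only in $\{\aefvalue,\aetvalue\}$ by construction, together with $\aefvalue=\bot^{\DMPPExtend{\mathbf{A}}}\notin D\cup\{\aetvalue\}$, which holds because $D$ is a proper lattice filter (ensured by non-triviality of $\DefMat$). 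Next, inspecting the clauses in Definition~\ref{def:expcons}, one checks that $\{\aefvalue,\aetvalue\}$ is closed under $\land^{\DMPPExtend{\mathbf{A}}}, \lor^{\DMPPExtend{\mathbf{A}}}, \DMNeg^{\DMPPExtend{\mathbf{A}}}$ and contains $\top^{\DMPPExtend{\mathbf{A}}}, \bot^{\DMPPExtend{\mathbf{A}}}$, and that the resulting $\DMSig$-subalgebra is isomorphic to $\BoolAlg$ via the assignment $\aetvalue\mapsto\tvalue$, $\aefvalue\mapsto\fvalue$.

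For the left-to-right direction, assume $\FmSetA \sequent_\CL \FmSetB$ and take any $\DefMat\in\mathcal{M}$ and any $h\in\Hom(\LangAlg{\DMoSig},\DMPPExtend{\mathbf{A}})$ satisfying $h(\FmSetA \cup\{\cons\DefProp_1,\ldots,\cons\DefProp_n\})\subseteq D\cup\{\aetvalue\}$. By the key fact, each $h(\DefProp_i)\in\{\aefvalue,\aetvalue\}$, so by closure under the $\DMSig$-operations, $h(\FmA)\in\{\aefvalue,\aetvalue\}$ for every $\FmA\in\LangSet{\DMSig}$ built from $\DefProp_1,\ldots,\DefProp_n$. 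Define $h'\in\Hom(\LangAlg{\DMSig},\BoolAlg)$ via the above isomorphism; then $h(\FmA)=\aetvalue$ iff $h'(\FmA)=\tvalue$, and consequently $h(\FmA)\in D\cup\{\aetvalue\}$ iff $h'(\FmA)=\tvalue$ (using again $\aefvalue\notin D$). Therefore $h'$ classically satisfies $\FmSetA$; by assumption it must designate some $\FmC\in\FmSetB$, giving $h(\FmC)\in D\cup\{\aetvalue\}$, as required.

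For the right-to-left direction, assume $\FmSetA\notsequent_{\CL}\FmSetB$, so there is a classical valuation $h'\in\Hom(\LangAlg{\DMSig},\BoolAlg)$ with $h'(\FmSetA)\subseteq\{\tvalue\}$ and $h'(\FmSetB)\subseteq\{\fvalue\}$. Pick any $\DefMat\in\mathcal{M}$ and define $h\in\Hom(\LangAlg{\DMoSig},\DMPPExtend{\mathbf{A}})$ by $h(\DefProp_i)\SymbDef\aetvalue$ if $h'(\DefProp_i)=\tvalue$ and $h(\DefProp_i)\SymbDef\aefvalue$ otherwise (the image of the remaining variables being chosen arbitrarily, e.g., as $\aefvalue$). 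Then $h(\cons\DefProp_i)=\aetvalue\in D\cup\{\aetvalue\}$ for each $i$, while the isomorphism with $\BoolAlg$ on $\{\aefvalue,\aetvalue\}$ yields $h(\FmSetA)\subseteq\{\aetvalue\}\subseteq D\cup\{\aetvalue\}$ and $h(\FmSetB)\subseteq\{\aefvalue\}$, which is disjoint from $D\cup\{\aetvalue\}$. This refutes $\FmSetA,\cons\DefProp_1,\ldots,\cons\DefProp_n\sequent_{\DMPPExtend{\DefMat}}\FmSetB$, hence $\FmSetA,\cons\DefProp_1,\ldots,\cons\DefProp_n\notsequent_{\DMPPExtend{\mathcal{M}}}\FmSetB$.

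The main obstacle, modest as it is, lies in pinning down carefully that $\aefvalue\notin D$ for every non-trivial model of $\FDELogic$; once this is secured, everything else reduces to a routine verification that the fresh bottom and top genuinely form a classical Boolean copy inside $\DMPPExtend{\mathbf{A}}$, which is immediate from Definition~\ref{def:expcons}.
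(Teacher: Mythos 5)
Your proof is correct and follows essentially the same route as the paper's: both identify the fresh pair $\{\aefvalue,\aetvalue\}$ as a two-element Boolean submatrix (the paper names it $\langle\PPAlg_2,\{\etvalue\}\rangle$ and invokes Proposition~\ref{prop:clpp2}), observe that the premises $\cons\DefProp_1,\ldots,\cons\DefProp_n$ force every relevant variable into it, and transfer valuations back and forth between $\DMPPExtend{\mathbf{A}}$ and $\BoolAlg$. One small remark: $\aefvalue\notin D$ holds simply because $\aefvalue\notin A\supseteq D$ by construction of $\DMPPExtend{\mathbf{A}}$, not because $D$ is a proper lattice filter --- an arbitrary non-trivial model of $\FDELogic$ need not have a filter as its set of designated values.
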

\begin{proof}
Let $\mathcal{M}$ be a class of 
{non-trivial}
models of $\FDELogic$.
Notice that $\langle \PPAlg_2, \{\etvalue\} \rangle$ is a submatrix of $\DMPPExtend{\DefMat}$
for all
$\DMPPExtend{\DefMat} \in \DMPPExtend{\mathcal{M}}$.

From the left to the right, contrapositively,
suppose that $\FmSetA, \cons \DefProp_1,\ldots,\cons\DefProp_n\; \notsequent_{\DMPPExtend{\mathcal{M}}}\; \FmSetB$. Then, there
are $\DMPPExtend{\DefMat} = \langle \DMPPExtend{\mathbf{A}}, D \cup \{\etvalue\} \rangle \in \DMPPExtend{\mathcal{M}}$ and $h \in \Hom(\LangAlg{\DMoSig}, \DMPPExtend{\mathbf{A}})$ such
that (a) $h(\FmSetA \cup \{\cons \DefProp_1,\ldots,\cons\DefProp_n\}) \subseteq D \cup \{\etvalue\}$
and (b) $h(\FmSetB) \subseteq \overline{D} \cup \{\efvalue\}$.
The interpretation of $\cons$ given in Definition \ref{def:expcons} and (a) entail that $h(\DefProp_i) \in \{\efvalue, \etvalue\}$ for all
$1 \leq i \leq n$. As $\PPAlg_2$ is a subalgebra of
$\DMPPExtend{\mathbf{A}}$, we may define
an $h' : \{\DefProp_1,\ldots,\DefProp_n\} \to \{\efvalue, \etvalue\}$
by setting $h'(\DefProp_i) := h(\DefProp_i)$; this extends to the full language and, in view of Definition \ref{def:expcons}, agrees with $h$
on the set $\FmSetA \cup \FmSetB$. Thus, by (a), $h'(\FmSetA) \subseteq \{\etvalue\}$ (as $\efvalue \not\in D$), while $h'(\FmSetB) \subseteq \{\efvalue\}$ by (b), meaning that $\FmSetA \notsequent^\top_{\PPAlg_2} \FmSetB$. Hence, by Proposition \ref{prop:clpp2}, we have
$\FmSetA \;\notsequent_{\CL}\; \FmSetB$.

From the right to the left, again contrapositively, assume that $\FmSetA\; \notsequent_{\CL}\; \FmSetB$. Thus, by Proposition \ref{prop:clpp2}, we have
$\FmSetA\; \notsequent^\top_{\PPAlg_2}\; \FmSetB$. Then there is $h \in \Hom(\LangAlg{\DMoSig}, \PPAlg_2)$ such that $h(\FmSetA) \subseteq \{\etvalue\}$ and $h(\FmSetB) \subseteq \{\efvalue\}$. Notice that, if
 $\DMPPExtend{\DefMat} = \langle \DMPPExtend{\mathbf{A}}, D \cup \{\etvalue\} \rangle \in \DMPPExtend{\mathcal{M}}$,
 then we may define $h' : P \to A \cup \{\efvalue, \etvalue\}$
with $h'(\DefProp) = h(\DefProp)$, for all $\DefProp \in P$.
As $\PPAlg_2$ is a subalgebra of $\DMPPExtend{\mathbf{A}}$,
$h'$ extends to the full language and agrees with $h$ on
it. Moreover,
as $h'(\DefProp_i) \in \{\efvalue, \etvalue\}$, we have,
by Definition \ref{def:expcons}, $h'(\cons \DefProp_i) = \etvalue$,
for all $1 \leq i \leq n$. Hence $h'(\FmSetA \cup\{\cons\DefProp_1,\ldots,\cons\DefProp_n\}) \subseteq \{\etvalue\}$,
while $h'(\FmSetB) \subseteq \{\efvalue\}$. Therefore, $\FmSetA, \cons\DefProp_1,\ldots,\cons\DefProp_n \;\notsequent_{\DMPPExtend{\DefMat}} \;\FmSetB$ for each
$\DMPPExtend{\DefMat} \in \DMPPExtend{\mathcal{M}}$, and,
in particular, we obtain $\FmSetA, \cons\DefProp_1,\ldots,\cons\DefProp_n \;\notsequent_{\DMPPExtend{\mathcal{M}}} \; \FmSetB$.
\end{proof}

{\begin{corollary}
\label{coro:datpp}
For all $\FmSetA,\FmSetB \subseteq \LangSet{\DMSig}$, we have
\[
\FmSetA \sequent_\CL \FmSetB \; \text{ if{f} } \; \FmSetA, \cons \DefProp_1,\ldots,\cons\DefProp_n \sequent^{\leq}_{\PPVar} \FmSetB,
\]
with $\{\DefProp_1,\ldots,\DefProp_n\} = \Props(\FmSetA \cup \FmSetB)$.
\end{corollary}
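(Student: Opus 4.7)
My plan is to derive the corollary as an immediate instance of the general DAT established in Theorem~\ref{the:datgeneral}, by choosing an appropriate class $\mathcal{M}$ of non-trivial models of $\FDELogic$ so that $\sequent_{\DMPPExtend{\mathcal{M}}}$ coincides with $\sequent^{\leq}_{\PPVar}$.

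First, I take $\mathcal{M} \SymbDef \{\langle \FDEAlg, \Upset{\both} \rangle\}$. By Example~\ref{ex:fde}, this single matrix characterises $\FDELogic$, and since $\Upset{\both} = \{\both,\tvalue\} \neq \FourSet$, the matrix is non-trivial; thus $\mathcal{M}$ satisfies the hypothesis of Theorem~\ref{the:datgeneral}. Next, I unfold Definition~\ref{def:expcons}: applying $(\cdot)^\cons$ to $\FDEAlg$ yields an algebra whose carrier is $\FourSet \cup \{\efvalue,\etvalue\} = \SixSet$, and an inspection of the clauses of the definition shows that this algebra coincides with $\PPSix$. Moreover, the designated set of $\DMPPExtend{\langle \FDEAlg, \Upset{\both}\rangle}$ is $\Upset{\both} \cup \{\etvalue\}$, which is just $\Upset{\both}$ computed in~$\PPSix$. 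Hence $\DMPPExtend{\mathcal{M}} = \{\langle \PPSix, \Upset{\both}\rangle\}$.

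Then, by Theorem~\ref{the:order-preserving-six-generated}, we have $\sequent^{\leq}_{\PPVar} \;=\; \sequent_{\langle \PPSix, \Upset{\both}\rangle} \;=\; \sequent_{\DMPPExtend{\mathcal{M}}}$. Substituting this identity into the conclusion of Theorem~\ref{the:datgeneral} instantiated at $\mathcal{M}$ immediately gives the equivalence stated in the corollary, with the variables $\DefProp_1,\ldots,\DefProp_n$ being precisely the propositional variables occurring in $\FmSetA \cup \FmSetB$.

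There is no substantive obstacle here: the corollary is really just Theorem~\ref{the:datgeneral} applied to one particular class of matrices together with the matrix-characterisation of $\sequent^{\leq}_{\PPVar}$ already obtained in Theorem~\ref{the:order-preserving-six-generated}. The only point requiring care is the bookkeeping involved in verifying that $\DMPPExtend{\langle \FDEAlg, \Upset{\both}\rangle}$ is, on the nose, the matrix $\langle \PPSix, \Upset{\both}\rangle$ used in Theorem~\ref{the:order-preserving-six-generated}---but this is a direct inspection of Definition~\ref{def:expcons} against the interpretations of $\PPSix$ given in Example following Definition~\ref{def:ppalgebra}.
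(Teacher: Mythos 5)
Your proposal is correct and follows essentially the same route as the paper: the authors likewise obtain the corollary by instantiating Theorem~\ref{the:datgeneral} at the single non-trivial matrix $\langle \FDEAlg, \Upset{\both} \rangle$ and invoking the fact that $\PPlogicMC$ is determined by $\langle \PPSix, \Upset{\both} \rangle = \DMPPExtend{\langle \FDEAlg, \Upset{\both} \rangle}$ (Theorem~\ref{the:order-preserving-six-generated}). Your version merely spells out the bookkeeping (non-triviality of the matrix and the identification of $\DMPPExtend{\FDEAlg}$ with $\PPSix$) a bit more explicitly than the paper does.
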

\begin{proof}
Follows by Theorem~\ref{the:datgeneral}, together with the facts that $\FDEAlg$ is 
{non-trivial}
and that $\PPlogicMC$ is determined
by the single matrix $\PPAlg_6$, which coincides
with $\DMPPExtend{\FDEAlg}$.
\end{proof}
}

\section{Axiomatizing Logics of De Morgan Algebras Enriched with Perfection}
\label{sec:axiomatization}

In the first part of this section, we provide
a general recipe for producing a
symmetrical Hilbert-style calculus
for the \SetSet{} logic determined
by any class $\mathcal{M}$ of $\DMSig$-matrices
expanded with the perfection operator~$\cons$ according to the mechanism set up in the previous section.
Our approach is based on adding some rules governing $\cons$ to
a given axiomatization of~$\mathcal{M}$, resulting in what we call
a \emph{relative axiomatization} of $\mathcal{M}^\cons$ by the added rules with
respect to the \SetSet{} logic determined by~$\mathcal{M}$. 
In the sequel, we
will show, for a particular class of matrices, how to turn the given \SetSet{} relative axiomatizations into \SetFmla{} axiomatizations,
using the fact proved in \cite[Theorem 5.37]{shoesmith_smiley:1978}
that a symmetrical calculus $\mathsf{R}$
can be transformed into a \SetFmla{}
calculus provided that 
$\vdash_\mathsf{R}$ has a disjunction.
If $\mathsf{R}$ axiomatizes a class
$\mathcal{M}$ of $\DMSig$-matrices,
a sufficient condition for the latter property to hold is that all members of $\mathcal{M}$ have prime filters as sets of designated values. 
{For this reason, the provided \SetFmla{}
Hilbert-style calculi will consist in axiomatizations for logics determined by classes of $\DMSig$-matrices whose designated values form prime filters.}

\subsection{Analyticity-preserving symmetrical calculi}
In what follows, 
if $\sequent_1$ and $\sequent_2$
are \SetSet{} logics over
$\Sigma$,
we set $\sequent_1 \simeq \sequent_2$
if{f} $\sequent_1 \cup \{(\LangSet{\Sigma}, \varnothing)\} = \sequent_2 \cup \{(\LangSet{\Sigma}, \varnothing)\}$.
It is clear that two \SetSet{} logics satisfying this
condition induce the same $\SetFmla{}$ logic.\footnote{This has been observed by R.\ Carnap, already in the 1940s \cite{Carnap43}.  It might seem that extending a logic this way would imply that a semantics characterising the extended logic would have to provide `models for contradictory formulas'. However, such a model, in this case, would be trivial, for it would make all formulas equally true. As argued in \cite{marcos-ineffable}, this is not the kind of models that a paraconsistent logician is interested upon.  This explains, by the way, why our definition of paraconsistency, presented towards the end of Section \ref{sec:preliminares}, has been formulated in terms of $\DefProp,\DMNeg\DefProp \;\notsequent\; \DeffProp$ rather than $\DefProp,\DMNeg\DefProp \;\notsequent\; \varnothing$.}
We will employ this weaker relation instead of the equality relation to make the results in this section more general and simpler to prove.
The first result below provides a generic recipe for axiomatizing the \SetSet{}
logic determined by the class $\mathcal{M}^\cons$, assuming we have a calculus $\mathsf{R}$ that axiomatizes the
\SetSet{} logic determined by $\DMReductPP{\mathcal{M}}$ {(namely, the family of the $\DMSig$-reducts of the matrices in $\mathcal{M}^\cons$)}.
The rules listed
in this result 
are among those obtained by
running the axiomatization
algorithm described in~\cite{marcelino:2019}
on the matrix $\langle \PPAlg_6, \Upset{\mathbf{b}} \rangle$,
using $\{\DefProp, \cons\DefProp, \DMNeg\DefProp\}$
as set of separators,
and then streamlining the resulting
calculus. What we will see now is that $\mathsf{R}$ together with these very rules axiomatizes
the $\SetSet{}$
logic determined by $\mathcal{M}^\cons$.

\begin{theorem}\label{thm:4.1}
\label{the:expcalculus}
Let $\mathcal{M}$ be a class of $\DMSig$-matrices. If $\sequentx{\DMReductPP{\mathcal{M}}} \simeq \sequentx{\mathsf{R}}$, then $\sequentx{\mathcal{M}^\cons} = \sequentx{\mathsf{R} \cup \mathsf{R}_\cons}$, where $\mathsf{R}_\cons$ consists of the following inference rules:
\vspace{-.5em}
\begin{align*}
\inferx[\mathsf{r}_1]
{}
{\cons \bot}
&&
\inferx[\mathsf{r}_2]
{}
{\cons \top}
&&
\inferx[\mathsf{r}_3]
{}
{\cons \cons p}
&&
\inferx[\mathsf{r}_4]
{\cons p }
{\cons \DMNeg p}
&&
\inferx[\mathsf{r}_5]
{\cons \DMNeg p }
{\cons p}
&&
\inferx[\mathsf{r}_6]
{\cons p}
{p, \DMNeg p}
&&
\inferx[\mathsf{r}_7]
{\cons p, p, \DMNeg p}
{}
\end{align*}
\vspace{-1.5em}
\begin{align*}
\inferx[\mathsf{r}_8]
{\cons p}
{\cons(p \land q), p}
&&
\inferx[\mathsf{r}_9]
{\cons q}
{\cons(p \land q), q}
&&
\inferx[\mathsf{r}_{10}]
{\cons(p \land q), q}
{\cons p}
&&
\inferx[\mathsf{r}_{11}]
{\cons(p \land q), p}
{\cons q}
&&
\inferx[\mathsf{r}_{12}]
{\cons p, \cons q}
{\cons(p \land q)}
&&
\inferx[\mathsf{r}_{13}]
{\cons(p \land q)}
{\cons p, \cons q}
\end{align*}
\vspace{-1.5em}
\begin{align*}
\inferx[\mathsf{r}_{14}]
{\cons p, \cons q}
{\cons(p \lor q)}
&&
\inferx[\mathsf{r}_{15}]
{\cons(p \lor q)}
{\cons p, \cons q}
&&
\inferx[\mathsf{r}_{16}]
{\cons p, p}
{\cons(p \lor q)}
&&
\inferx[\mathsf{r}_{17}]
{\cons q, q}
{\cons(p \lor q)}
&&
\inferx[\mathsf{r}_{18}]
{\cons(p \lor q)}
{\cons p, q}
&&
\inferx[\mathsf{r}_{19}]
{\cons(p \lor q)}
{\cons q, p}
\end{align*}
\end{theorem}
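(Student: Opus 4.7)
My plan is to establish the two inclusions $\sequentx{\mathsf{R}\cup\mathsf{R}_\cons} \subseteq \sequentx{\mathcal{M}^\cons}$ (soundness) and $\sequentx{\mathcal{M}^\cons} \subseteq \sequentx{\mathsf{R}\cup\mathsf{R}_\cons}$ (completeness) separately. Soundness will be the easy direction: the rules in $\mathsf{R}$ are sound for $\DMReductPP{\mathcal{M}}$ by the hypothesis $\sequentx{\DMReductPP{\mathcal{M}}} \simeq \sequentx{\mathsf{R}}$, and they remain sound for $\mathcal{M}^\cons$ because each $\DMPPExtend{\DefMat}$ interprets the $\DMSig$-connectives identically to its $\DMSig$-reduct $\DMReductPP{\DefMat}$. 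I will verify each of the nineteen rules in $\mathsf{R}_\cons$ by a routine case analysis based on Definition~\ref{def:expcons}: the crucial fact is that $\cons^{\DMPPExtend{\mathbf{A}}}(a) = \etvalue$ iff $a \in \{\efvalue,\etvalue\}$, from which, for instance, $\mathsf{r}_3$ is immediate (since $\cons\cons$ always produces $\etvalue$).

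For completeness I will argue contrapositively: assuming $\FmSetA \not\sequentx{\mathsf{R}\cup\mathsf{R}_\cons} \FmSetB$, I build a matrix $\DMPPExtend{\DefMat} \in \mathcal{M}^\cons$ and a valuation separating $\FmSetA$ from $\FmSetB$. A standard Lindenbaum-Asser saturation will yield a bipartition $(\Phi,\Psi)$ of $\LangSet{\DMoSig}$ with $\FmSetA \subseteq \Phi$, $\FmSetB \subseteq \Psi$, $\Phi \not\sequentx{\mathsf{R}\cup\mathsf{R}_\cons} \Psi$, and closed under rule application (no instance $(\FmSetC,\FmSetD) \in \mathsf{R}\cup\mathsf{R}_\cons$ has $\FmSetC \subseteq \Phi$ and $\FmSetD \subseteq \Psi$ simultaneously). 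Rules $\mathsf{r}_3, \mathsf{r}_6, \mathsf{r}_7$ will then imply that, for every $\psi$ with $\cons\psi \in \Phi$, exactly one of $\psi, \DMNeg\psi$ belongs to $\Phi$, pinning down a bivalent pattern on the classical fragment of the pair. For the non-classical fragment (formulas $\psi$ with $\cons\psi \in \Psi$), I will import a $\DMSig$-valuation by introducing fresh propositional variables for the outermost $\cons$-subformulas --- producing a translation $\tau : \LangSet{\DMoSig} \to \LangSet{\DMSig}(P^*)$ --- applying the hypothesis $\sequentx{\mathsf{R}} \simeq \sequentx{\DMReductPP{\mathcal{M}}}$ to $(\tau(\Phi),\tau(\Psi))$, and extracting some $\DMPPExtend{\DefMat} \in \mathcal{M}^\cons$ together with a separating $\DMSig$-valuation $h^*$ on $\DMPPExtend{\mathbf{A}}$ (the edge case $\tau(\Psi) = \varnothing$ being absorbed by $\simeq$). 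The target $\DMoSig$-homomorphism $h : \LangAlg{\DMoSig} \to \DMPPExtend{\mathbf{A}}$ is then defined by setting $h(p) = h^*(p)$ for $p \in P$ and extending via the algebraic operations of $\DMPPExtend{\mathbf{A}}$.

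The hard part will be showing that $h(\varphi) \in D \cup \{\etvalue\}$ iff $\varphi \in \Phi$ for every $\varphi \in \LangSet{\DMoSig}$. I will prove this by induction on the structure of $\varphi$; the routine cases fall out from the homomorphism property together with the correspondence $h(\psi) = h^*(\tau(\psi))$ for $\DMSig$-formulas $\psi$. The delicate step is $\varphi = \cons\psi$: as $h(\cons\psi) = \cons^{\DMPPExtend{\mathbf{A}}}(h(\psi))$ always lies in $\{\etvalue,\efvalue\}$, the claim reduces to the compositional correspondence $h(\psi) \in \{\efvalue,\etvalue\}$ iff $\cons\psi \in \Phi$, which is not a priori satisfied and will require refining the choice of $h^*$ on the fresh variables. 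This is precisely where the rules $\mathsf{r}_8$--$\mathsf{r}_{19}$ become indispensable: they encode exactly how $\{\efvalue,\etvalue\}$-valued-ness propagates through $\land$ and $\lor$, while $\mathsf{r}_1$--$\mathsf{r}_7$ handle the atomic, constant, and negation cases. A careful case analysis on the outermost connective of $\psi$, leveraging the saturation of $(\Phi,\Psi)$ under $\mathsf{R}_\cons$, will force the required correspondence, closing the induction and completing the proof.
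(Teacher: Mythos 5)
Your overall architecture — soundness by inspection of Definition~\ref{def:expcons}, then completeness argued contrapositively via a saturated bipartition $\langle T,F\rangle$ (your $(\Phi,\Psi)$) obtained by cut for sets, the trichotomy forced by $\mathsf{r}_3,\mathsf{r}_6,\mathsf{r}_7$, and an appeal to the reduct hypothesis to extract a separating $\DMSig$-valuation $v$ — coincides with the paper's. The gap is in the final model-construction step, which is where all the real work lies. You define $h$ by setting $h(p)=h^*(p)$ for $p\in P$ and extending homomorphically, hoping to prove the truth lemma ``$h(\FmA)$ is designated iff $\FmA\in T$'' by induction and to repair the $\cons$-case by ``refining $h^*$ on the fresh variables''. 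This cannot work as stated. First, the fresh variables of $P^*$ do not enter $h$ at all (you only read off $h^*$ on $P$), so refining $h^*$ there changes nothing. Second, $h$ is already wrong on the original variables: if $\cons p,p\in T$, the truth lemma forces $h(p)\in\{\aefvalue,\aetvalue\}\cap(D\cup\{\aetvalue\})=\{\aetvalue\}$, whereas the reduct valuation only guarantees $h^*(p)\in D\cup\{\aetvalue\}$; dually, if $\DMNeg\cons p\in T$, nothing prevents $h^*(p)\in\{\aefvalue,\aetvalue\}$, which would wrongly designate $\cons p$. Third, your induction hypothesis is too weak: since $\mathcal{M}$ is an arbitrary class of $\DMSig$-matrices (no primality or filter condition is assumed in this theorem), designation of $h(\FmA_1)$ and $h(\FmA_2)$ does not determine designation of $h(\FmA_1\lor\FmA_2)$, so the induction must track the exact value of $h(\FmA)$ and its relation to $v(\FmA)$ — and that relation turns out to be formula-dependent rather than the uniform correspondence $h(\psi)=h^*(\tau(\psi))$ you posit.

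The paper resolves exactly this by defining the candidate valuation $v'$ directly on \emph{all} formulas by cases on the partition — $v'(\FmA)=\aetvalue$ if $\cons\FmA,\FmA\in T$; $v'(\FmA)=\aefvalue$ if $\cons\FmA,\DMNeg\FmA\in T$; $v'(\FmA)=v(\FmA)$ if $\DMNeg\cons\FmA\in T$ — together with two extra clauses sending an ``imperfect'' conjunction (resp.\ disjunction) $\FmA_1\land\FmA_2$ with one ``perfect'' conjunct $\FmA_{3-i}$ to $v(\FmA_i)$, and only then verifying, connective by connective, that $v'$ is a homomorphism; that verification is precisely where $\mathsf{r}_8$--$\mathsf{r}_{19}$ are consumed. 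The two extra clauses, for which your proposal has no analogue, are indispensable: if $\DMNeg\cons(\FmA_1\land\FmA_2),\cons\FmA_2,\FmA_2\in T$, then the correct value of the conjunction is $v(\FmA_1)\land^{\mathbf{A}^\cons}\aetvalue=v(\FmA_1)$, which need not equal $v(\FmA_1\land\FmA_2)$. You correctly located the difficulty in the $\cons$-case and correctly identified which rules must carry it, but the construction you describe does not yet yield a valuation for which your truth lemma could hold; it would have to be replaced by (something equivalent to) the paper's case-defined $v'$ plus the full homomorphicity check.
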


\begin{proof}
Checking the soundness of those rules is routine; we provide only a couple of examples.
Let~$v$ be an $\DefMat^\cons$-valuation.
The rule $\mathsf{r}_3$ is sound in $\DefMat^\cons$, given that
$v(\cons \FmA) \in \{\aefvalue, \aetvalue\}$, 
so we have that $v(\cons \cons \FmA) = \aetvalue$. 
On what concerns rule $\mathsf{r}_8$, we have that,
if $v(\cons \FmA) = \aetvalue$, then either (i) $v(\FmA) = \aefvalue$ or
(ii) $v(\FmA) = \aetvalue$. 
Soundness is obvious in case (ii).
In case (i), $v(\FmA \land \FmB) = \aefvalue$, so $v(\cons (\FmA \land \FmB)) = \aetvalue$.

For completeness, assume $\FmSetA \;\notsequentx{\mathsf{R}_\cons}\; \FmSetB$. Then, by cut for sets, there is a partition $\langle T, F \rangle$ of $\LangSet{\DMoSig}$ such that $\FmSetA \subseteq T$ and $\FmSetB \subseteq F$ and $T \;\notsequentx{\mathsf{R}_\cons}\; F$. Note that (by $\mathsf{r}_3, \mathsf{r}_6$ and $\mathsf{r}_7$) for each $\FmA$, we have either $\cons \FmA \in T$ or $\DMNeg \cons \FmA \in T$, but never both. In particular, $F$ is never empty. Also, by $\mathsf{r}_6$ and $\mathsf{r}_7$, if we have $\cons \FmA \in T$, we have either $\FmA \in T$ or $\DMNeg \FmA \in T$, but never both. Hence, each~$\FmA$ must belong to exactly one of three cases: (a) $\DMNeg \cons \FmA \in T$, (b) $\cons \FmA, \FmA \in T$ or (c) $\cons \FmA, \DMNeg \FmA \in T$.

Since $\mathsf{R} \subseteq \mathsf{R} \cup \mathsf{R}_\cons$, we also have $T \notsequentx{\mathsf{R}} F$.
From the fact that $\sequentx{\mathsf{R}} \simeq \sequentx{\widehat{\DefMat}}$ and $F \neq \varnothing$ we know that $T \notsequentx  {\widehat{\DefMat}} F$.
We may therefore pick some $\widehat{\DefMat}$-valuation~$v$,
for some $\DefMat \in \mathcal{M}$, such that $v(T) \subseteq D$ and 
{$v(F)\subseteq\overline{D}$}.
Consider now the mapping $v^\prime : \LangSet{\DMoSig} \to \DefMat^\cons$ defined by:
\begin{equation*}
    v^\prime(\FmA) :=
    \begin{cases}
        v(\FmA_i) & \text{if } \FmA = \FmA_1 \land \FmA_2 \text{ and } \DMNeg\cons\FmA, \cons\FmA_{3-i} \in T \text{ for } i \in \{1,2\}\\
        v(\FmA_i) & \text{if } \FmA = \FmA_1 \lor \FmA_2 \text{ and } \DMNeg\cons\FmA, \cons\FmA_{i} \in T \text{ for } i \in \{1,2\}\\
        v(\FmA) & \text{if } \DMNeg\cons\FmA \in T\\
        \aetvalue & \text{if } \cons\FmA, \FmA\in T\\
        \aefvalue & \text{if } \cons\FmA, \DMNeg\FmA\in T
    \end{cases}
\end{equation*}
We will check that $v^\prime$ is an $\DefMat^\cons$-valuation:
\begin{enumerate}
    \item $v^\prime(\cons\FmA) = \cons v^\prime(\FmA)$: If (i) $\DMNeg \cons \FmA \in T$
    then, by $\mathsf{r}_3$, $\cons \cons \FmA \in T$ (so $v^\prime(\cons \FmA) = \aefvalue$).
    Thus $v^\prime(\cons \FmA) = \aefvalue = \cons(v^\prime(\FmA))$.
    If (ii) $\cons \FmA, \FmA \in T$,
    then, by $\mathsf{r}_3$, $\cons \cons \FmA \in T$ (so $v^\prime(\cons \FmA) = \aetvalue$).
    So $v^\prime(\cons \FmA) = \aetvalue = \cons(v^\prime(\FmA))$.
    Case (iii) is analogous to (ii).
    
    \item $v^\prime(\DMNeg\FmA) = \DMNeg v^\prime(\FmA)$: If (i) $\DMNeg \cons \DMNeg \FmA \in T$,
    then, by $\mathsf{r}_3$ and $\mathsf{r}_7$, $\cons \DMNeg \FmA \notin T$.
    Then, by $\mathsf{r}_4$, $\cons \FmA \notin T$.
    Thus, by $\mathsf{r}_3$ and $\mathsf{r}_6$, $\DMNeg \cons \FmA \in T$ (so $v^\prime(\FmA) = v(\FmA)$).
    So $v^\prime(\DMNeg \FmA) = v(\DMNeg \FmA) = \DMNeg v(\FmA) = \DMNeg(v^\prime(\FmA))$.
    If (ii) $\cons \DMNeg \FmA, \DMNeg \FmA \in T$,
    by $\mathsf{r}_5$, $\cons \FmA \in T$ (so $v^\prime(\FmA) = \aefvalue)$).
    Then $v^\prime(\DMNeg \FmA) = \aetvalue = \DMNeg(v^\prime(\FmA))$.
    Case (iii) is analogous to (ii).
    
    \item $v^\prime(\FmA \land \FmB) = v^\prime(\FmA) \land v^\prime(\FmB)$: If (i) $\DMNeg\cons(\FmA \land \FmB) \in T$,
    then, by $\mathsf{r}_3$ and $\mathsf{r}_7$, we have that $\cons(\FmA \land \FmB) \notin T$.
    By $\mathsf{r}_{12}$, we have that (a) $\cons \FmA, \cons \FmB \notin T$, (b) $\cons \FmA \in T$ and $\cons \FmB \notin T$ or (c) $\cons \FmA \notin T$ and $\cons \FmB \in T$.  So:
    \begin{enumerate}\itemsep0pt
        \item 
        By $\mathsf{r}_3$ and $\mathsf{r}_6$, $\DMNeg \cons \FmA, \DMNeg \cons \FmB \in T$
        (so $v^\prime(\FmA) = v(\FmA)$ and $v^\prime(\FmB) = v(\FmB)$).
        So $v^\prime(\FmA \land \FmB) = v(\FmA \land \FmB) = v(\FmA) \land v(\FmB) = v^\prime(\FmA) \land v^\prime(\FmB)$.
        \item 
        By $\mathsf{r}_3$ and $\mathsf{r}_6$, $\DMNeg\cons\FmB \in T$ (so $v'(\FmB) = v(\FmB)$).
        By $\mathsf{r}_8$, $\FmA \in T$ (so $v'(\FmA)=\aetvalue$).
        Therefore $v'(\FmA \land \FmB) = v(\FmB) = v'(\FmB) = \aetvalue \land v'(\FmB) = v'(\FmA) \land v'(\FmB)$.
        \item 
        This case is analogous to the previous one, but now using $\mathsf{r}_9$.
    \end{enumerate}
    If (ii) $\cons(\FmA \land \FmB), \FmA \land \FmB \in T$,
    then $\FmA, \FmB \in T$.
    By $\mathsf{r}_{10}$ and $\mathsf{r}_{11}$, $\cons \FmA, \cons \FmB \in T$.
    (so $v^\prime(\FmA) = v^\prime(\FmB) = \aetvalue$)
    hence $v^\prime(\FmA \land \FmB) = \aetvalue = v^\prime(\FmA) \land v^\prime(\FmB)$. If (iii) $\cons(\FmA \land \FmB), \DMNeg(\FmA \land \FmB) \in T$,
    then either $\DMNeg \FmA \in T$ or $\DMNeg \FmB \in T$.
    By $\mathsf{r}_{13}$, we have that (a) $\cons \FmA, \cons \FmB \in T$, (b) $\cons \FmA \in T$ and $\cons \FmB \notin T$ or (c) $\cons \FmA \notin T$ and $\cons \FmB \in T$.  So:
    \begin{enumerate}\itemsep0pt
        \item 
        Here, we have that 
        $v^\prime(\FmA) = \aefvalue$ or $v^\prime(\FmB) = \aefvalue$.
        So $v^\prime(\FmA \land \FmB) = \aefvalue = v^\prime(\FmA) \land v^\prime(\FmB)$.
        \item 
        By $\mathsf{r}_{11}$ and $\mathsf{r}_6$ $\DMNeg \FmA \in T$ (so $v^\prime(\FmA) = \aefvalue)$.
        So $v^\prime(\FmA \land \FmB) = \aefvalue = v^\prime(\FmA) \land v^\prime(\FmB)$.
        \item 
        This case is analogous to the previous one, using $\mathsf{r}_{10}$.
    \end{enumerate}
    \item $v^\prime(\FmA \lor \FmB) = v^\prime(\FmA) \lor v^\prime(\FmB)$: analogous to the case of $\land$.
    \item $v^\prime(\bot) = \aefvalue$ and $v^\prime(\top) = \aetvalue$: directly from rules $\mathsf{r}_1$ and $\mathsf{r}_2$.
\qedhere
\end{enumerate}
\end{proof}

Given $\FmSetAnalytic \subseteq \LangSet{\DMoSig}$, let $\FmSetAnalytic^\cons := \FmSetAnalytic \cup \{\dmneg p, \cons p, \dmneg \cons p\}$. The theorem below shows that the recipe presented above preserves analyticity.

\begin{theorem}\label{thm:4.8}
Let $\mathcal{M}$ be a class of $\DMSig$-matrices.
If $\mathsf{R}$ is a $\FmSetAnalytic$-analytic axiomatization of $\sequent_{\DMReductPP{\mathcal{M}}}$,
then $\mathsf{R} \cup \mathsf{R}_\cons$ is a $\FmSetAnalytic^\cons$-analytic axiomatization of $\sequent_{\mathcal{M}^\cons}$.
\end{theorem}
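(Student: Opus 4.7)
My plan is to piggyback on Theorem~\ref{thm:4.1} for the matter of soundness-and-completeness of $\mathsf{R}\cup \mathsf{R}_{\cons}$ with respect to $\sequentx{\mathcal{M}^{\cons}}$, and to concentrate on the analyticity claim, which would be proved via a contrapositive/cut-for-sets argument adapting the construction in the proof of Theorem~\ref{thm:4.1}, now restricted to the set $\Upsilon^{\FmSetAnalytic^{\cons}}$ of generalized subformulas. Concretely, I would assume $\FmSetA \sequentx{\mathsf{R}\cup\mathsf{R}_{\cons}} \FmSetB$ (equivalently, by Theorem~\ref{thm:4.1}, $\FmSetA \sequentx{\mathcal{M}^{\cons}} \FmSetB$) and, contrapositively, suppose that no closed derivation uses only formulas in $\Upsilon^{\FmSetAnalytic^{\cons}}$. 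By the standard cut-for-sets lemma applied within $\Upsilon^{\FmSetAnalytic^{\cons}}$, this yields a partition $\langle T, F\rangle$ of $\Upsilon^{\FmSetAnalytic^{\cons}}$ with $\FmSetA\subseteq T$, $\FmSetB\subseteq F$, and such that no instance of a rule of $\mathsf{R}\cup\mathsf{R}_{\cons}$ whose premises sit in $T$ and whose conclusions sit in $\Upsilon^{\FmSetAnalytic^{\cons}}$ allows us to close $T \sequentx{} F$.

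The next step is to observe that the trichotomy exploited in the proof of Theorem~\ref{thm:4.1} still goes through for every $\FmA\in \mathsf{sub}(\FmSetA\cup \FmSetB)$: since $\cons p$, $\DMNeg p$ and $\DMNeg\cons p$ belong to $\FmSetAnalytic^{\cons}$, the formulas $\cons\FmA$, $\DMNeg\FmA$ and $\DMNeg\cons\FmA$ all lie in $\Upsilon^{\FmSetAnalytic^{\cons}}$, and the rules $\mathsf{r}_3$, $\mathsf{r}_6$, $\mathsf{r}_7$ force exactly one of: (a) $\DMNeg\cons\FmA \in T$, (b) $\cons\FmA,\FmA\in T$, or (c) $\cons\FmA,\DMNeg\FmA\in T$. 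Analogously, whenever $\FmA_1\land \FmA_2$ or $\FmA_1\lor \FmA_2$ is itself a subformula of $\FmSetA\cup\FmSetB$, the rules $\mathsf{r}_{8}$--$\mathsf{r}_{19}$ become instantiable on formulas that all lie inside $\Upsilon^{\FmSetAnalytic^{\cons}}$, so the subcase distinctions of the proof of Theorem~\ref{thm:4.1} transfer verbatim.

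The heart of the argument is the invocation of the $\FmSetAnalytic$-analyticity of $\mathsf{R}$. The idea is that $\mathsf{R}$ on its own cannot close $T\sequentx{} F$ either, so the $\DMSig$-fragment of the partition (after reading each maximal occurrence of a $\cons$-subterm as a fresh propositional variable, turning the relevant fragment into a genuine $\DMSig$-sequent over $\Upsilon^{\FmSetAnalytic}$ of the translated pair) is not refutable by $\mathsf{R}$ within $\Upsilon^{\FmSetAnalytic}$; by the assumed $\FmSetAnalytic$-analyticity of $\mathsf{R}$ and the fact that $\sequentx{\mathsf{R}}\simeq \sequentx{\DMReductPP{\mathcal{M}}}$, we then extract a valuation $v$ on some $\DMReductPP{\DefMat}$ (with $\DefMat\in \mathcal{M}$) separating the $\DMSig$-readings of $T$ and $F$. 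Extending $v$ to a valuation $v'$ on $\DefMat^{\cons}$ by the case-based recipe in the proof of Theorem~\ref{thm:4.1} (assigning $\aetvalue$ in case (b), $\aefvalue$ in case (c), and $v(\FmA)$ in case (a), plus the appropriate subcase choices for $\land$ and $\lor$) produces an $\DefMat^{\cons}$-valuation that witnesses $\FmSetA \notsequentx{\mathcal{M}^{\cons}} \FmSetB$, closing the contrapositive.

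The main obstacle I anticipate is making the translation between the $\DMoSig$-partition $\langle T,F\rangle$ over $\Upsilon^{\FmSetAnalytic^{\cons}}$ and the $\DMSig$-partition over $\Upsilon^{\FmSetAnalytic}$ fully rigorous, so that the $\FmSetAnalytic$-analyticity of $\mathsf{R}$ really applies. The delicate point is to argue that freezing each outermost $\cons$-subterm as an atom does not spoil the analyticity bound, i.e.\ that the instances of $\FmSetAnalytic$-formulas needed by $\mathsf{R}$ remain inside $\Upsilon^{\FmSetAnalytic^{\cons}}$ when read back with $\cons$-subterms restored; this is ultimately guaranteed by having put $\DMNeg p$, $\cons p$, and $\DMNeg\cons p$ into $\FmSetAnalytic^{\cons}$, but verifying it carefully is where most of the bookkeeping of the proof will reside.
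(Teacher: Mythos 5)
Your proposal follows essentially the same route as the paper's proof: contraposition plus cut-for-sets to obtain a saturated partition $\langle T,F\rangle$ of the generalized subformulas, then completeness (and analyticity) of $\mathsf{R}$ to obtain a countervaluation for the $\DMSig$-fragment --- the paper phrases your ``freeze the $\cons$-subterms as atoms'' step as picking $v \in \Hom_{\DMSig}(\LangAlg{\DMoSig}, \DMReductPP{\DefMat})$ --- followed by the case-based construction of $v'$ from Theorem~\ref{thm:4.1}, which stays inside the generalized subformulas precisely because $\DMNeg p, \cons p, \DMNeg\cons p \in \FmSetAnalytic^\cons$, exactly as you observe. The paper only adds the explicit closing remark that, since $\mathsf{sub}(\FmSetA\cup\FmSetB)$ is closed under subformulas, $v'$ is a partial $\DefMat^\cons$-valuation and hence extends to a total one witnessing $\FmSetA\;\notsequentx{\mathcal{M}^\cons}\;\FmSetB$.
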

\begin{proof}
Let $\Upsilon := \mathsf{sub}(\FmSetA \cup \FmSetB)$ and $\Lambda := \Upsilon_{\FmSetAnalytic^\cons}$.
Assume that $\FmSetA \;\notsequent_{\mathsf{R} \cup \mathsf{R_\cons}}^\Lambda \; \FmSetB$.
Then, by cut for sets, there is a partition $\langle T, F \rangle$ of $\Lambda$ such that $\FmSetA \subseteq T$ and $\FmSetB \subseteq F$ and $T \notsequent_{\mathsf{R} \cup \mathsf{R_\cons}}^\Lambda F$. Since $\mathsf{R} \subseteq \mathsf{R} \cup \mathsf{R}_\cons$, we also have 
$T \notsequent_{\mathsf{R}}^\Lambda F$.
From the fact that $\mathsf{R}$ axiomatizes $\sequentx{\DMReductPP{\mathcal{M}}}$ and $F \neq \varnothing$ we know that $T \notsequentx  {\DMReductPP{\mathcal{M}}} F$.
We may therefore pick $v \in \Hom_{\DMSig}(\LangAlg{\DMoSig}, \DMReductPP{\DefMat})$, for some $\DefMat \in \mathcal{M}$, such that $v(T) \subseteq D$ and 
{$v(F)\subseteq\overline{D}$}.
Since, for each $\FmA \in \Upsilon$, we have $\dmneg \FmA, \cons \FmA, \dmneg \cons \FmA \in \Lambda$,
we may use the same construction given in Theorem \ref{thm:4.1} to define a certain mapping $v^\prime: \Upsilon \to \DefMat^\cons$.
That $v^\prime$ respects all the connectives follows from the fact that in the proof of Theorem \ref{thm:4.1}
we only used instances of the rules employing formulas present in $\Lambda$.
This, together with the fact that $\Upsilon$ is closed under subformulas, implies that $v^\prime$ is a partial $\DefMat^\cons$-valuation.
Hence, $v^\prime$ may be extended to a total $\DefMat^\cons$-valuation,
witnessing the fact that $\FmSetA\; \notsequentx{\mathcal{M}^\cons}\; \FmSetB$, thus concluding
the proof.
\end{proof}

From Theorem \ref{def:expcons} and Theorem \ref{thm:4.8}, it follows that:

\begin{corollary}
Let $\mathcal{S} \SymbDef \{p, \DMNeg p\}$.
The calculus presented
in Example {\rm\ref{ex:fdeax}}
together with
the rules of $\mathsf{R}_\cons$
is an $\mathcal{S}^\cons$-analytic axiomatization of $\PPlogic$.
\end{corollary}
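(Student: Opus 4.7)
The plan is to invoke Theorem~\ref{thm:4.8} instantiated with $\mathcal{M} := \{\langle \FDEAlg, \Upset{\both}\rangle\}$, using $\mathsf{R}_\FDELogic$ (from Example~\ref{ex:fdeax}) as the $\mathcal{S}$-analytic axiomatization of $\sequent_{\DMReductPP{\mathcal{M}}}$. Everything then reduces to checking that the hypotheses of Theorem~\ref{thm:4.8} hold and that $\sequent_{\mathcal{M}^\cons}$ is the logic we want.

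First, I would verify that $\langle\FDEAlg, \Upset{\both}\rangle^\cons$ coincides with the matrix $\langle\PPSix, \Upset{\both}\rangle$ from Theorem~\ref{the:order-preserving-six-generated}. On the algebra side this is the observation recorded after Proposition~\ref{prop:psfromdm}, namely that $\PPSix$ is precisely $\DMPPExtend{\FDEAlg}$, as obtained via Definition~\ref{def:expcons}; on the designated-values side, $\Upset{\both}$ computed in $\PPSix$ is exactly $\{\both, \tvalue, \etvalue\} = \Upset{\both}^{\FDEAlg} \cup \{\etvalue\}$, matching the prescription of the construction~$(\cdot)^\cons$. Consequently $\sequent_{\mathcal{M}^\cons} = \sequent_{\langle\PPSix, \Upset{\both}\rangle} = \PPlogicMC$, whose \SetFmla{} companion is $\PPlogic$.

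Second, I would discharge the two hypotheses of Theorem~\ref{thm:4.8}: (i) that $\mathsf{R}_\FDELogic$ is $\mathcal{S}$-analytic, which is asserted in Example~\ref{ex:fdeax} and follows from the general method of~\cite{marcelino:2019}; and (ii) that $\mathsf{R}_\FDELogic$ axiomatizes $\sequent_{\DMReductPP{\mathcal{M}}}$. For~(ii), Example~\ref{ex:fdeax} provides that $\mathsf{R}_\FDELogic$ axiomatizes $\FDELogic = \sequent_{\langle\FDEAlg, \Upset{\both}\rangle}$, and Corollary~\ref{coro:redequalscons}, applied to the non-trivial model $\langle\FDEAlg, \Upset{\both}\rangle$ of $\FDELogic$, gives exactly the required equality $\sequent_{\langle\FDEAlg,\Upset{\both}\rangle} = \sequent_{\DMReductPP{\langle\FDEAlg,\Upset{\both}\rangle}}$.

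With both hypotheses in place, Theorem~\ref{thm:4.8} directly yields that $\mathsf{R}_\FDELogic \cup \mathsf{R}_\cons$ is an $\mathcal{S}^\cons$-analytic axiomatization of $\sequent_{\mathcal{M}^\cons} = \PPlogicMC$, and hence of $\PPlogic$ as its \SetFmla{} companion, which is the claim. There is no real obstacle here: this corollary is a clean specialization of the general results, and the only point requiring any care is the bookkeeping identification of $\langle\PPSix, \Upset{\both}\rangle$ with the output $\langle\FDEAlg, \Upset{\both}\rangle^\cons$ of the construction in Definition~\ref{def:expcons}, which is immediate once one reads off the clauses for $\land, \lor, \DMNeg, \cons, \bot, \top$ therein.
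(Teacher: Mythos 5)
Your proposal is correct and follows essentially the route the paper intends (the paper offers no written proof, merely citing the construction of Definition~\ref{def:expcons} and Theorem~\ref{thm:4.8}): you instantiate Theorem~\ref{thm:4.8} with $\mathcal{M}=\{\langle\FDEAlg,\Upset{\both}\rangle\}$, identify $\langle\FDEAlg,\Upset{\both}\rangle^\cons$ with $\langle\PPSix,\Upset{\both}\rangle$ so that $\sequent_{\mathcal{M}^\cons}=\PPlogicMC$ by Theorem~\ref{the:order-preserving-six-generated}, and discharge the hypothesis on $\sequent_{\DMReductPP{\mathcal{M}}}$ via Corollary~\ref{coro:redequalscons}. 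Your write-up is in fact more careful than the paper's one-line justification, and the bookkeeping you flag is exactly the only content of the corollary.
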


As explained in \cite{marcelino:2019},
analytic calculi as those we have been discussing are associated to a proof-search algorithm and a countermodel-search algorithm,
and consequently to a decision procedure for the corresponding \SetSet{} logics. 
Briefly put, if we want to know whether $\FmSetA \;\sequent_\mathsf{R}\; \FmSetB$,
where $\mathsf{R}$ is a $\FmSetAnalytic$-analytic
symmetrical calculus, 
obtaining a proof when the answer is positive and a countermodel otherwise,
we may attempt to build a derivation in
the following way: start from
a single node labelled with $\FmSetA$
and search for a rule instance
of $\mathsf{R}$ not used in the same branch
with formulas in the set
$\Upsilon^{\FmSetAnalytic}$
(namely, the set of generalized subformulas
of $(\FmSetA,\FmSetB)$, as defined
in Section~\ref{sec:preliminares})
whose premises are in~$\FmSetA$. If there is one,
expand that node by creating a
child node labelled with $\FmSetA\cup\{\Fm\}$ for each formula $\Fm$ 
in the succedent of the chosen rule instance
and repeat this step for each new node.
In case it fails in finding a rule instance for applying to some node,
we may conclude that no proof exists,
and from each non-$\FmSetB$-closed branch we may extract a countermodel.
In case every branch eventually gets $\FmSetB$-closed, the resulting
tree is a proof of the desired statement. The following example illustrates
how this works.

\vspace{-.5em}
\begin{figure}[bth]
    \centering
    \scalebox{.9}{
    \begin{tikzpicture}[node distance=1.5cm, squarednode/.style={rectangle, draw=red!60, fill=red!5, very thick, minimum size=5mm}]
    \node (00)                     {$\varnothing$};
    \node (0)  [below=0.56\distance of 00]  {$\cons \cons p$};
    \node (11) [below left  of=0]  {$\cons p$};
    \node (12) [below right of=0]  {$\dmneg \cons p$};
    \node (21) [below left  of=11] {$p$};
    \node (22) [below right of=11] {$\dmneg p$};
    \draw (0)  -- (00) node[midway,left]       {$\mathsf{r}_3$};
    \draw (0)  -- (11) node[midway,above left] {$\mathsf{r}_6$};
    \draw (0)  -- (12);
    \draw (11) -- (21) node[midway,above left] {$\mathsf{r}_{6}$};
    \draw (11) -- (22);
    \node (x) [right of=00] {};
    \node (y) [right of=x] {};
    \node (z) [right of=y] {};
    \node (0000) [right of=z]                    {$\varnothing$};
    \node (000)  [below=0.56\distance of 0000]  {$\cons \cons p$};
    \node (110) [below left  of=000]  {$\cons p$};
    \node (120) [below right of=000]  {$\dmneg \cons p$};
    \node (210) [below=0.56\distance of 110]  {$\cons \dmneg p$};
    \node[squarednode] (310) [below left  of=210] {$\dmneg p$};
    \node (320) [below right of=210] {$\dmneg \dmneg p$};
    \node (420) [below=0.56\distance of 320]  {$p$};
    \draw (000)  -- (0000) node[midway,left]       {$\mathsf{r}_3$};
    \draw (000)  -- (110) node[midway,above left] {$\mathsf{r}_6$};
    \draw (000)  -- (120);
    \draw (110) -- (210) node[midway,left]       {$\mathsf{r}_{4}$};
    \draw (210) -- (310) node[midway,above left] {$\mathsf{r}_{6}$};
    \draw (210) -- (320);
    \draw (320) -- (420);
    \node (xx) [right of=0000] {};
    \node (yy) [right of=xx] {};
    \node (zz) [right of=yy] {};
    \node (a)  [right of=zz]     {$\varnothing$};
    \node (b)  [below=0.56\distance of a]  {$\cons \cons p$};
    \node (c) [below left  of=b]  {$\cons p$};
    \node[squarednode] (d) [below right of=b]  {$\dmneg \cons p$};
    \node (e) [below left  of=c] {$p$};
    \node (f) [below right of=c] {$\dmneg p$};
    \node (g) [below=0.56\distance of f] {$\cons p \land \dmneg p$};
    \draw (b)  -- (a) node[midway,left]       {$\mathsf{r}_3$};
    \draw (b)  -- (c) node[midway,above left] {$\mathsf{r}_6$};
    \draw (b)  -- (d);
    \draw (c) -- (e) node[midway,above left] {$\mathsf{r}_{6}$};
    \draw (c) -- (f);
    \draw (f) -- (g);
    \end{tikzpicture}}
    \vspace{-.5em}
    \caption{\\ Outputs of the proof-search and of the countermodel-search algorithm induced by our analytic symme\-trical calculus, witnessing that $\varnothing\;\sequent_{\mathsf{R}\cup\mathsf{R}^\cons}\;\DefProp,\DMNeg\DefProp,\DMNeg\cons\DefProp$; that $\varnothing\;\notsequent_{\mathsf{R}_{\FDELogic}^{} \cup \mathsf{R}_\FDELogic^\cons}\; \DefProp,\DMNeg\cons\DefProp$ and that $\varnothing\;\notsequent_{\mathsf{R}_{\FDELogic}^{} \cup \mathsf{R}_\FDELogic^\cons}\; \DefProp,\cons\DefProp\land\DMNeg\DefProp$.%
    }%
    \label{fig:derivations_analytic}
\end{figure}

\begin{example}
The first tree in Figure~{\rm\ref{fig:derivations_analytic}} proves that $\varnothing\;\sequent_{\mathsf{R}\cup\mathsf{R}^\cons}\;\DefProp,\DMNeg\DefProp,\DMNeg\cons\DefProp$ in any $\FmSetAnalytic^\cons$-analytic calculus $\mathsf{R}\cup\mathsf{R}^\cons$ obtained from Theorem {\rm\ref{the:expcalculus}}, and may be
easily built by the algorithm described above.
If we consider the calculus $\mathsf{R}_\FDELogic$ given
in Example {\rm\ref{ex:fdeax}},
the second tree in the same figure shows
an output of the described algorithm
when we search for a countermodel
witnessing $\varnothing\;\notsequent_{\mathsf{R}_{\FDELogic}^{} \cup \mathsf{R}_\FDELogic^\cons}\; \DefProp,\DMNeg\cons\DefProp$.
In this tree, the leftmost branch 
is a non-$\FmSetB$-closed branch
for which no rule instance
based only on subformulas of
$\FmSetA \cup \FmSetB$ and not used yet in the same branch is available.
This implies that,
for $\FmSetD = \{\cons\cons\DefProp,\cons\DefProp,\cons\DMNeg\DefProp,\DMNeg\DefProp\}$, which are the formulas in the leaf of this non-$\FmSetB$-closed branch, we have
$\FmSetD \notsequent_{\mathsf{R}_{\FDELogic}^{} \cup \mathsf{R}_{\FDELogic}^{\cons}}\;\Upsilon^{\FmSetAnalytic^\cons} {\setminus} \FmSetD$.
As the semantical counterpart of this
calculus is the matrix $\langle \PPAlg_6, \Upset{\both} \rangle$, a valuation $v$
such that $v(\FmSetD) \subseteq \Upset{\both}$
necessarily sets $v(\DefProp) = \aefvalue$, since
$\{\DMNeg\DefProp, \cons\DefProp\} \subseteq \FmSetD$.
A similar situation occurs in the third tree,
which constitutes evidence for $\varnothing\;\notsequent_{\mathsf{R}_\FDELogic^{} \cup \mathsf{R}_\FDELogic^\cons}\; \DefProp,\cons\DefProp\land\DMNeg\DefProp$,
meaning that the pseudo-complement 
given by
$\neg x \SymbDef \cons x \land \DMNeg x$ is
non-implosive and, thus, not a classical negation in $\PPlogic$. 
(This is not surprising, in view of Proposition \ref{prop:no_classical_neg}, but it is worth contrasting this with what happens in many other \textbf{LFI}s~{\rm\cite{jm2005thesis}}, in which the latter definition of~$\neg$ \emph{does} correspond to a classical negation.)
\end{example}

\subsection{\SetFmla{} Hilbert-style calculi for logics of De Morgan algebras with prime filters}

We may extend the
recipe given in
Theorem \ref{the:expcalculus},
which delivers a symmetrical Hilbert-style calculus,
to provide a $\SetFmla{}$ Hilbert-style
calculus
for the class $\mathcal{M}^\cons$
when $\mathcal{M}$ itself
is axiomatized by
a $\SetFmla{}$ Hilbert-style calculus.
Before showing how, we will define a collection of such conventional Hilbert-style inference rules associated to a given collection of symmetrical rules.
In what follows,
when $\FmSetA = \{\FmA_1,\ldots,\FmA_n\} \subseteq \LangSet{\Sigma}$ ($n \geq 1$),
let $\bigvee\FmSetA \SymbDef (\ldots(\FmA_1 \lor \FmA_2) \lor \ldots) \lor \FmA_n$. Also, let $\FmSetA \lor \FmB \SymbDef \{\FmA \lor \FmB \mid \FmA \in \FmSetA\}$.
\vspace{0.5em}
\begin{definition}
Let $\mathsf{R}$ be
a symmetrical calculus.
Define the set $\mathsf{R}^{\lor} \SymbDef
\left\{\frac{p}{p \lor q}, \frac{p \lor q}{q \lor p}, \frac{p \lor (q \lor r)}{(p \lor q) \lor r}\right\} \cup
\left\{\mathsf{r}^\lor \mid \mathsf{r} \in \mathsf{R}\right\}$ where $\mathsf{r}^\lor$
is $\frac{\varnothing}{\FmA}$
    if $\mathsf{r} = \frac{\varnothing}{\FmA}$, $\frac{\FmSetA \lor p_0}{(\bigvee \FmSetB) \lor p_0}$
    if $\mathsf{r} = \frac{\FmSetA}{\FmSetB}$,
    and $\frac{\FmSetA \lor p_0}{p_0}$
    if $\mathsf{r} = \frac{\FmSetA}{\varnothing}$,
    where $p_0$ is a
    propositional variable
    not occurring in the
    rules that belong to $\mathsf{R}$.
\end{definition}

The following result 
states that, when
$\mathsf{R}$
is the calculus given
by Theorem \ref{the:expcalculus},
the calculus $\mathsf{R}^\lor$
is the $\SetFmla{}$ Hilbert-style
calculus we are
looking for.
\vspace{.5em}

\begin{theorem}\label{thm:4.3}
\label{the:set-fmla-calculi}
Let $\mathcal{M}$ be a class of $\DMSig$-matrices whose designated sets are prime filters, and let $\mathsf{R}$ be a $\SetFmla{}$ Hilbert-style calculus. If\, $\vdash_\mathsf{R} \;=\; \vdash_\mathcal{M} \;=\; \vdash_{\widehat{\mathcal{M}^\cons}}$, then $\vdash_{(\mathsf{R} \cup \mathsf{R}_\cons)^\lor} \;=\; \vdash_{\mathcal{M}^\cons}$.
\end{theorem}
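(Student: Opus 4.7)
The plan is to derive the result from Theorem~\ref{thm:4.1} by two invocations of~\cite[Theorem 5.37]{shoesmith_smiley:1978}, which transforms a symmetrical calculus $\mathsf{R}'$ into the $\SetFmla{}$ calculus $(\mathsf{R}')^\lor$ axiomatizing the same $\SetFmla{}$ companion whenever $\vdash_{\mathsf{R}'}$ enjoys a disjunction. The very first observation is that $\vdash_{\mathcal{M}^\cons}$ does have a disjunction: since each $\DefMat = \langle \mathbf{A}, D\rangle \in \mathcal{M}$ has $D$ a prime filter of $\mathbf{A}$, a direct case analysis on Definition~\ref{def:expcons} shows that $D \cup \{\etvalue\}$ is a prime filter of $\DMPPExtend{\mathbf{A}}$ (closure under $\land^{\DMPPExtend{\mathbf{A}}}$ and primality with respect to $\lor^{\DMPPExtend{\mathbf{A}}}$ both follow from the clauses that define these operations on the added values $\efvalue$ and $\etvalue$), and hence $\vdash_{\mathcal{M}^\cons}$ and $\vdash_{\widehat{\mathcal{M}^\cons}}$ both have a disjunction.

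I then fix any symmetrical calculus $\mathsf{R}^\ast$ axiomatizing $\sequentx{\widehat{\mathcal{M}^\cons}}$; the algorithm of~\cite{marcelino:2019} guarantees that such a calculus exists. By Theorem~\ref{thm:4.1}, the calculus $\mathsf{R}^\ast \cup \mathsf{R}_\cons$ axiomatizes $\sequentx{\mathcal{M}^\cons}$; applying~\cite[Theorem 5.37]{shoesmith_smiley:1978} to it, with the disjunction property of $\vdash_{\mathcal{M}^\cons}$ just established, yields that the $\SetFmla{}$ calculus $(\mathsf{R}^\ast \cup \mathsf{R}_\cons)^\lor$ axiomatizes $\vdash_{\mathcal{M}^\cons}$.

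To close the gap, I show $\vdash_{(\mathsf{R} \cup \mathsf{R}_\cons)^\lor} \,=\, \vdash_{(\mathsf{R}^\ast \cup \mathsf{R}_\cons)^\lor}$. Both $\SetFmla{}$ calculi share the three basic disjunction rules and all lifted rules $\mathsf{R}_\cons^\lor$, so the equality reduces to $\vdash_{\mathsf{R}^\lor} \,=\, \vdash_{(\mathsf{R}^\ast)^\lor}$. A second application of~\cite[Theorem 5.37]{shoesmith_smiley:1978}, this time to $\mathsf{R}$ (viewed as a symmetrical calculus with singleton succedents) and to $\mathsf{R}^\ast$, both with common $\SetFmla{}$ companion $\vdash_\mathsf{R} \,=\, \vdash_{\widehat{\mathcal{M}^\cons}} \,=\, \vdash_{\mathsf{R}^\ast}$ which enjoys a disjunction, confirms that each of $\mathsf{R}^\lor$ and $(\mathsf{R}^\ast)^\lor$ axiomatizes $\vdash_{\widehat{\mathcal{M}^\cons}}$, and the theorem follows by composing the two chains of equalities.

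The main obstacle will be the justification of the final replacement argument: although $\vdash_{\mathsf{R}^\lor} \,=\, \vdash_{(\mathsf{R}^\ast)^\lor}$ holds as $\SetFmla{}$ consequence relations, one must still confirm that augmenting both calculi with the same extra rule set yields equivalent $\SetFmla{}$ calculi. This follows because any derivation using a rule of $\mathsf{R}^\lor$ may be simulated by a derivation in $(\mathsf{R}^\ast)^\lor$ (and vice versa) thanks to the interderivability of the two systems, but the argument must be carried out carefully so as to account for nested applications of rules drawn from $\mathsf{R}_\cons^\lor$ and the basic disjunction rules.
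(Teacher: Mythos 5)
Your overall strategy is workable and genuinely different from the paper's. The paper feeds the given \SetFmla{} calculus $\mathsf{R}$ itself into Theorem~\ref{thm:4.1}: from $\vdash_{\mathsf{R}}\,=\,\vdash_{\widehat{\mathcal{M}^\cons}}$ it passes to $\sequentx{\mathsf{R}}\simeq\sequentx{\widehat{\mathcal{M}^\cons}}$ (this is precisely what the weakened relation $\simeq$ was introduced for), obtains $\sequentx{\mathcal{M}^\cons}=\sequentx{\mathsf{R}\cup\mathsf{R}_\cons}$ and hence $\vdash_{\mathcal{M}^\cons}\,=\,\vdash_{\mathsf{R}\cup\mathsf{R}_\cons}$, verifies that the latter logic has a disjunction via the prime-filter hypothesis, and applies \cite[Theorem 5.37]{shoesmith_smiley:1978} exactly once to $\mathsf{R}\cup\mathsf{R}_\cons$. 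You instead route through an auxiliary symmetrical calculus $\mathsf{R}^\ast$ exactly axiomatizing $\sequentx{\widehat{\mathcal{M}^\cons}}$ and then transfer back to $\mathsf{R}$ at the \SetFmla{} level. The step you single out as the main obstacle --- the replacement argument --- is actually the unproblematic part: for conventional Hilbert calculi, $\vdash_{\mathsf{S}_1}=\vdash_{\mathsf{S}_2}$ implies $\vdash_{\mathsf{S}_1\cup\mathsf{T}}=\vdash_{\mathsf{S}_2\cup\mathsf{T}}$, because every rule of $\mathsf{S}_1$ is a derived rule of $\mathsf{S}_2\cup\mathsf{T}$ and \SetFmla{} derivations compose; no delicate analysis of nested rule applications is needed. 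Your semantic verification that $D\cup\{\aetvalue\}$ is a prime filter of $\DMPPExtend{\mathbf{A}}$ is also fine and is a clean alternative to the paper's syntactic check.

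The genuine weak point is the auxiliary calculus itself. The algorithm of \cite{marcelino:2019} produces a calculus only for a single finite matrix satisfying the monadicity requirement, whereas $\mathcal{M}$ here is an arbitrary class of $\DMSig$-matrices with prime filters, so that citation does not deliver $\mathsf{R}^\ast$. One can always take $\mathsf{R}^\ast$ to be the set of all consecutions of $\sequentx{\widehat{\mathcal{M}^\cons}}$, but then its rules may have infinite antecedents and succedents, and the translation $(\cdot)^\lor$ --- hence \cite[Theorem 5.37]{shoesmith_smiley:1978} in the form used here --- is only defined for rules with finite antecedents and finite, nonempty-or-empty succedents, since $\bigvee\FmSetB$ must be a formula. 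Nothing in the hypotheses guarantees that $\sequentx{\widehat{\mathcal{M}^\cons}}$ admits a finitary symmetrical axiomatization (it need not even be finitary as a consequence relation), so the object $(\mathsf{R}^\ast\cup\mathsf{R}_\cons)^\lor$ on which your argument pivots need not exist in the stated generality. The repair is exactly the paper's move: dispense with $\mathsf{R}^\ast$ altogether and apply Theorem~\ref{thm:4.1} to $\mathsf{R}$ itself, which is finitary by assumption and for which the hypothesis $\sequentx{\mathsf{R}}\simeq\sequentx{\widehat{\mathcal{M}^\cons}}$ --- rather than strict equality of the induced \SetSet{} logics --- is all that is required.
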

\vspace{-0.8em}
\begin{proof}If\, $\vdash_\mathsf{R}  \;=\; \vdash_{\widehat{\mathcal{M}}^\cons}$ then $\sequentx{\mathsf{R}} \simeq \sequentx{\widehat{\mathcal{M}^\cons}}$, so,
by Theorem \ref{the:expcalculus}, we have that $\sequentx{\mathcal{M}^\cons} = \sequentx{\mathsf{R} \cup \mathsf{R}_\cons}$, and
thus $\vdash_{\mathcal{M}^\cons} \;=\; \vdash_{\mathsf{R} \cup \mathsf{R}_\cons}$.
Given that $\mathcal{M}$ is a class of $\DMSig$-matrices whose designated sets are prime filters
and $\vdash_\mathsf{R} \;=\; \vdash_\mathcal{M}$, we have
$p\vdash_\mathsf{R} p\lor q$, $q\vdash_\mathsf{R} p\lor q$ and $p\lor q\vdash_\mathsf{R} p, q$.
Since $\widehat{\mathcal{M}^\cons}$ preserves the latter inferences, then $p\vdash_{(\mathsf{R} \cup \mathsf{R}_\cons)} p\lor q$,
and $q\vdash_{(\mathsf{R} \cup \mathsf{R}_\cons)} p\lor q$, and $p\lor q\vdash_{(\mathsf{R} \cup \mathsf{R}_\cons)} p, q$.
The latter statements 
guarantee that $\vdash_{\mathsf{R}}$ has a disjunction, so
by \cite[Theorem 5.37]{shoesmith_smiley:1978} we have that $\vdash_{\mathsf{R} \cup \mathsf{R}_\cons} \;=\; \vdash_{(\mathsf{R} \cup \mathsf{R}_\cons)^\lor}$.
Therefore, $\vdash_{\mathcal{M}^\cons} \;=\; \vdash_{(\mathsf{R} \cup \mathsf{R}_\cons)^\lor}$.
\end{proof}

\begin{example}\label{ex:4.4}
Consider a $\SetFmla{}$ Hilbert calculus 
that axiomatizes $\vdash_\mathcal{B}$.
Since $\mathcal{B} \;=\; \vdash_{\langle \FDEAlg, \Upset{\both} \rangle} \;=\; \vdash_{\widehat{\langle \FDEAlg, \Upset{\both} \rangle^\cons}}$~(cf. {\rm \cite{marcelino2021}}), we obtain a conventional Hilbert-style axiomatization for $\PPlogic \;=\; \vdash_{\langle \FDEAlg, \Upset{\both} \rangle^\cons}\;=\; \vdash_{\langle\mathbf{PP}_6, \uparrow\both\rangle}$ by adding to that calculus the $\mathsf{R}_\cons^\lor$ rules. 
We illustrate, below, with some of the resulting rules:
\begin{align*}
\inferx[\mathsf{r}_6^\lor]
{\cons p \lor r}
{(p \lor \DMNeg p) \lor r}
&&
\inferx[\mathsf{r}_7^\lor]
{\cons p \lor r, p \lor r, \DMNeg p \lor r}
{r}
&&
\inferx[\mathsf{r}_8^\lor]
{\cons p \lor r}
{(\cons(p \land q) \lor p) \lor r}
&&
\inferx[\mathsf{r}_{16}^\lor]
{\cons p \lor r, p \lor r}
{\cons(p \lor q) \lor r}
\end{align*}
\end{example}

In what follows we consider a few extensions of $\PPlogic$,
illustrating how our methods may be used to axiomatize them.
The following result, which is an immediate consequence of Theorem \ref{thm:4.3},
shows that Example \ref{ex:4.4} smoothly generalises to all super-Belnap logics.

\begin{proposition}\label{prop:4.5}
Let $\mathcal{M}$ be a class of models of $\mathcal{B}$ whose designated sets are prime filters. If $\vdash_\mathcal{M}$
is ax\-iom\-atized relative to $\FDELogic$ by a set~$\mathsf{R}$ of \SetFmla{} rules, 
then $\vdash_{\DMPPExtend{\mathcal{M}}}$ is also axiomatized by $\mathsf{R}$ relative to~$\DMPPExtend{\mathcal{B}}$.
\end{proposition}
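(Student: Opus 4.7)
My plan is to reduce the statement to two applications of Theorem~\ref{thm:4.3}. Fix any $\SetFmla{}$ Hilbert-style axiomatization $\mathsf{R}_\FDELogic$ of $\FDELogic$ (cf.\ Example~\ref{ex:4.4}); by hypothesis, $\mathsf{R}_\FDELogic \cup \mathsf{R}$ then axiomatizes $\vdash_\mathcal{M}$. Corollary~\ref{coro:redequalscons} gives both $\vdash_\mathcal{M} \;=\; \vdash_{\widehat{\mathcal{M}^\cons}}$ and $\FDELogic \;=\; \vdash_{\widehat{\mathcal{B}^\cons}}$, so Theorem~\ref{thm:4.3} applies to each of these two classes to furnish the $\SetFmla{}$ axiomatizations
$$
\PPlogic \;=\; \vdash_{(\mathsf{R}_\FDELogic \cup \mathsf{R}_\cons)^\lor} \qquad \text{and} \qquad \vdash_{\DMPPExtend{\mathcal{M}}} \;=\; \vdash_{(\mathsf{R}_\FDELogic \cup \mathsf{R} \cup \mathsf{R}_\cons)^\lor}.
$$
The task thus reduces to verifying that the smallest $\SetFmla{}$ logic extending $\PPlogic$ and closed under the rules in $\mathsf{R}$ agrees with $\vdash_{(\mathsf{R}_\FDELogic \cup \mathsf{R} \cup \mathsf{R}_\cons)^\lor}$.

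For the soundness inclusion I would note that $\PPlogic \;\subseteq\; \vdash_{\DMPPExtend{\mathcal{M}}}$ holds because each $\DMPPExtend{\DefMat}$ is based on a PP-algebra whose designated set is a non-empty lattice filter, hence a model of the order-preserving logic $\PPlogic$; and the rules of $\mathsf{R}$, which mention only $\DMSig$-formulas, remain sound when passing from $\vdash_\mathcal{M} = \vdash_{\widehat{\mathcal{M}^\cons}}$ to $\vdash_{\DMPPExtend{\mathcal{M}}}$, since every $\DMoSig$-valuation on $\DMPPExtend{\DefMat}$ restricts to a $\DMSig$-valuation on $\widehat{\DMPPExtend{\DefMat}}$.

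The converse inclusion is the crux: it suffices to derive every rule of $(\mathsf{R}_\FDELogic \cup \mathsf{R} \cup \mathsf{R}_\cons)^\lor$ inside $\PPlogic + \mathsf{R}$, and since the rules belonging to $(\mathsf{R}_\FDELogic \cup \mathsf{R}_\cons)^\lor$ are already part of $\PPlogic$, only the rules of $\mathsf{R}^\lor$ require justification. Here I rely on the fact that $\PPlogic$ is characterised by a matrix with a prime filter of designated values (Theorem~\ref{the:order-preserving-six-generated}), so it has a disjunction and thereby admits proof by cases. For each $\mathsf{r} = \inferx[]{\FmA_1, \ldots, \FmA_n}{\FmB} \in \mathsf{R}$, an iterated case analysis on the disjunctions $\FmA_i \lor p_0$ appearing in the antecedent of $\mathsf{r}^\lor$ produces the required derivation: along any branch where $p_0$ is chosen, $\FmB \lor p_0$ follows immediately by disjunction introduction; in the unique branch where all the $\FmA_i$ are chosen, $\mathsf{r}$ itself produces $\FmB$, and disjunction introduction then delivers $\FmB \lor p_0$. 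The main (though routine) obstacle I anticipate is organising this case analysis cleanly to handle $\mathsf{r}$ of arbitrary arity, including the axiom case $n=0$.
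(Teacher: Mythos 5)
Your overall strategy---two applications of Theorem~\ref{thm:4.3} followed by a reconciliation of $(\mathsf{R}_\FDELogic\cup\mathsf{R}\cup\mathsf{R}_\cons)^\lor$ with $\PPlogic+\mathsf{R}$---is sensible (the paper itself offers nothing beyond ``immediate consequence of Theorem~\ref{thm:4.3}'', so this reconciliation genuinely needs to be supplied), and your soundness inclusion is fine. The gap is in the converse inclusion, exactly at the step you describe as routine. To derive $\mathsf{r}^\lor$ you run a case analysis in which one branch applies the rule $\mathsf{r}\in\mathsf{R}$; gluing the branches together therefore requires proof by cases for the logic $\PPlogic+\mathsf{R}$, not for $\PPlogic$. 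Having a disjunction is \emph{not} preserved under the addition of arbitrary Hilbert-style rules: for instance $\FDELogic$ has a disjunction, but $\FDELogic+\{\DefProp,\DMNeg\DefProp\,/\,\DeffProp\}$ does not derive $(\DefProp\land\DMNeg\DefProp)\lor r\,/\,\DeffProp\lor r$ (a product of $\langle\FDEAlg,\Upset{\both}\rangle$ with $\langle\BoolAlg,\{\tvalue\}\rangle$ is a countermodel that still validates the added rule). So citing Theorem~\ref{the:order-preserving-six-generated} to get a disjunction for $\PPlogic$ does not license the case analysis you perform inside $\PPlogic+\mathsf{R}$; as written, the crux of the argument is unjustified.

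The repair uses precisely the hypothesis you never invoke at this point: $\mathcal{M}$ consists of prime-filter matrices \emph{and} $\mathsf{R}_\FDELogic\cup\mathsf{R}$ is complete for $\vdash_\mathcal{M}$. Hence it is $\vdash_{\mathsf{R}_\FDELogic\cup\mathsf{R}}\,=\,\vdash_\mathcal{M}$ that is guaranteed to have a disjunction, and your iterated case analysis, carried out in \emph{this} logic, shows that each $\mathsf{r}^\lor$ (for $\mathsf{r}\in\mathsf{R}$) is already a derived rule of $\mathsf{R}_\FDELogic\cup\mathsf{R}$. Since every rule of $\mathsf{R}_\FDELogic$ holds in $\PPlogic$ for all $\DMoSig$-instances (Corollary~\ref{coro:redequalscons} applied to $\langle\FDEAlg,\Upset{\both}\rangle$ together with Theorem~\ref{the:order-preserving-six-generated}), this schematic derivation transfers, and every rule of $\mathsf{R}^\lor$ becomes derivable in $\PPlogic+\mathsf{R}$, which closes the inclusion $\vdash_{(\mathsf{R}_\FDELogic\cup\mathsf{R}\cup\mathsf{R}_\cons)^\lor}\subseteq\,\PPlogic+\mathsf{R}$ and completes the proof.
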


Let $\mathcal{M}_1$ and $\mathcal{M}_2$ be two classes of models of $\mathcal{B}$
such that $\vdash_{\mathcal{M}_1} \;=\; \vdash_{\mathcal{M}_2}$.
Then $\vdash_{\mathcal{M}_1}$ and $\vdash_{\mathcal{M}_2}$ are axiomatized
by the same set $\mathsf{R}$ of singleton-succedent rules.
Hence, $\vdash_{\DMPPExtend{\mathcal{M}_1}} \;=\; \vdash_{\DMPPExtend{\mathcal{M}_2}}$
is axiomatized by the set $\mathsf{R}_\cons^\lor$ defined above.
This entails, in particular, that,
if a super-Belnap logic $\SuperBelnapCon$ is finitary, then $\SuperBelnapCon^\cons$
(described in Lemma \ref{coro:conservsuper}) is also finitary. 
Since the lattice of super-Belnap logics contains continuum-many finitary logics \cite[Corollary 8.17]{prenosil2021}, we obtain the following sharpening of Corollary \ref{coro:embdsupbelnap}:

\begin{proposition}
There are continuum-many finitary extensions of $\PPlogic$.
\end{proposition}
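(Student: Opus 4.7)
The plan is to package together three prior results: (i) the injectivity of the lattice embedding $\vdash \mapsto \vdash^\cons$ from the super-Belnap lattice into the lattice of extensions of $\PPlogic$ (Corollary~\ref{coro:embdfunc}), (ii) the finitariness-preservation observation sketched in the paragraph immediately preceding the statement, and (iii) the cited result \cite[Corollary~8.17]{prenosil2021} providing continuum-many finitary super-Belnap logics.

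Concretely, I would begin by fixing, via the cited result, a family $\{\SuperBelnapCon_i\}_{i\in I}$ of pairwise distinct finitary super-Belnap logics with $|I|=2^{\aleph_0}$. Applying the embedding yields the family $\{\SuperBelnapCon_i^\cons\}_{i\in I}$ of extensions of $\PPlogic$; by Corollary~\ref{coro:embdfunc} these remain pairwise distinct, so this family also has cardinality $2^{\aleph_0}$.

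It then only remains to confirm that each $\SuperBelnapCon_i^\cons$ is finitary. For this I would invoke Proposition~\ref{prop:4.5}: since $\SuperBelnapCon_i$ is finitary, it is axiomatized relative to $\FDELogic$ by a set $\mathsf{R}_i$ of $\SetFmla{}$ rules each with finite antecedent, and Proposition~\ref{prop:4.5} provides an axiomatization of $\SuperBelnapCon_i^\cons$ relative to $\PPlogic$ by the very same $\mathsf{R}_i$. Since $\PPlogic$ itself admits a finite $\SetFmla{}$ axiomatization (obtainable by combining a finite axiomatization of $\FDELogic$ with the rules of $\mathsf{R}_\cons^\lor$, whose antecedents are all finite by inspection of Theorem~\ref{the:expcalculus}), the overall axiomatization of $\SuperBelnapCon_i^\cons$ consists of finitary rules, and hence $\SuperBelnapCon_i^\cons$ is itself finitary.

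I do not anticipate a substantial obstacle here: the argument is essentially a bookkeeping exercise combining established facts. The only point requiring a sentence of care is the hypothesis of Proposition~\ref{prop:4.5}, namely that each $\SuperBelnapCon_i$ be determined by a class of models of $\FDELogic$ whose designated sets are prime filters; this, however, holds generally for super-Belnap logics, since the canonical matrix models of $\FDELogic$ are built upon De Morgan algebras with prime filters as designated sets.
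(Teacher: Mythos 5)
Your argument is essentially the paper's own: injectivity of the map $\SuperBelnapCon\;\mapsto\;\SuperBelnapCon^\cons$ from Corollary~\ref{coro:embdfunc}, preservation of finitariness via the relative axiomatizations of Proposition~\ref{prop:4.5} (this is exactly the content of the paragraph preceding the statement), and the continuum of finitary super-Belnap logics from the cited result. The one blemish is your closing claim that \emph{every} super-Belnap logic is determined by a class of prime-filter matrices --- this fails for extensions lacking the proof-by-cases property, such as $\FDELogic$ augmented with $p,\DMNeg p\vdash q$ --- but the paper's own sketch is equally silent on how that hypothesis is met for the particular continuum-sized family being embedded, so your proposal matches the paper's proof in both structure and level of detail.
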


The super-Belnap logics (see \cite{rivieccio2017} for further details) considered below for the sake of illustration are the
Asenjo-Priest Logic of Paradox $\mathcal{LP}$, the two logics $\mathcal{K}_\leq$ and $\mathcal{K}_1$ named after S.\ C.\ Kleene, and Classical
Logic $\mathcal{CL}$.
In the remaining results of this section, we use the notation $\mathcal{L} + (R)$ to refer to the \SetFmla{} Hilbert-style calculus
resulting from adding rule $(R)$ to
a Hilbert-style system for $\mathcal{L}$.
{In addition, we will write $\Logic\mathcal{M}$
for $\vdash_\mathcal{M}$.}
The next result establishes that each of these logics can be axiomatized,
relative to $\mathcal{B}$, by a combination of the rules given below.
In the sequel, we show, in a similar way, 
how some logics characterized
by matrices over PP-algebras can be
axiomatized relatively to $\PPlogic$.
\begin{align*}
\inferx[(\mathrm{DS})]
{p \land (\dmneg p \lor q)}
{q}
&&
\inferx[({K}_1)]
{(p \land \dmneg p) \lor q}
{q}
&&
\inferx[({K}_\leq)]
{(p \land \dmneg p) \lor r}
{q \lor \dmneg q \lor r}
&&
\inferx[\mathrm{(EM)}]
{}
{p \lor \dmneg p}
\end{align*}

\begin{proposition}{\rm(\cite[Theorem 3.4]{rivieccio2017})}\label{prop:4.6}
\begin{enumerate}\itemsep0pt
    \item[(i)] $\mathcal{LP} = \Logic\langle\mathbf{K}_3, {\uparrow}\neither\rangle = \mathcal{B} + (EM)$
    \item[(ii)] $\mathcal{K}_1 = \Logic\langle\mathbf{K}_3, \{\tvalue\}\rangle = \mathcal{B} + (K_1)$
    \item[(iii)] $\mathcal{K}_\leq = \Logic\{\langle\mathbf{K}_3, {\Upset{\neither}}\rangle, \langle\mathbf{K}_3, \{\tvalue\}\rangle\} = \mathcal{B} + (K_\leq)$
    \item[(iv)] $\mathcal{CL} = \Logic\langle\mathbf{B}_2, \{\tvalue\}\rangle = \mathcal{B} + (DS) + (EM)$
\end{enumerate}
\end{proposition}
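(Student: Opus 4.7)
The plan is to observe, first of all, that the statement is taken verbatim from \cite[Theorem 3.4]{rivieccio2017}, so a rigorous proof would simply cite that source; nevertheless, I sketch below the natural strategy to establish each of the four items by the standard template of (a) a semantic characterization by a class of matrices, and (b) a matching axiomatization by adding a single extra rule (or pair of rules) to $\mathcal{B}$. For each item, the chain of equalities has two links: the first link is a matter of definition or well-established convention (the classical Logic of Paradox $\mathcal{LP}$ is \emph{defined} as the logic of $\langle\mathbf{K}_3,\Upset{\neither}\rangle$; analogously for the strong Kleene logic $\mathcal{K}_1$, for $\mathcal{K}_\leq$, and for $\mathcal{CL}$); so the substantive work lies entirely in the second link.

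For each of the second equalities, I would proceed by the usual soundness plus completeness argument. Soundness amounts to routine matrix verification of each added rule on the target matrix: for $(EM)$ on $\langle\mathbf{K}_3,\Upset{\neither}\rangle$, one checks that $a\lor\DMNeg^{\mathbf{K}_3} a\in\Upset{\neither}$ for every $a\in\{\tvalue,\neither,\fvalue\}$; for $(K_1)$ on $\langle\mathbf{K}_3,\{\tvalue\}\rangle$, one checks that whenever $(a\land\DMNeg a)\lor b=\tvalue$, then necessarily $b=\tvalue$, since $a\land\DMNeg a\in\{\fvalue,\neither\}$; for $(K_\leq)$, one checks the analogous property on both matrices in the class; and for $(DS)$ on $\mathbf{B}_2$, one notes that the Boolean interpretation validates disjunctive syllogism in the classical way.

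For completeness, the customary strategy is to invoke the Lindenbaum--\L os technique: given $\FmSetA\nvdash_{\mathcal{B}+(R)}\FmA$, build a prime theory extending $\FmSetA$ not containing $\FmA$, and use the quotient by the Leibniz congruence to obtain a reduced matrix model of $\mathcal{B}+(R)$ refuting the inference. The nontrivial step is then to argue that every reduced matrix model of $\mathcal{B}+(R)$ embeds (or maps) into the specified target matrix. Here one exploits that the reduced models of $\mathcal{B}$ are precisely pairs $\langle\mathbf{A},D\rangle$ with $\mathbf{A}$ a De Morgan algebra and $D$ a prime lattice filter of $\mathbf{A}$, and that each added rule imposes a structural constraint: $(EM)$ forces $a\lor\DMNeg a\in D$ for every $a$, which rules out the value $\neither$ in any subdirectly irreducible De Morgan reduct outside the $\Upset{\neither}$ setup; $(K_1)$ forces $a\land\DMNeg a\notin D$ together with $D$ being closed under the filter-like property; and so on.

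The main obstacle, as usual in this genre of axiomatization theorems, is the final step of completeness — showing that the added rule(s) are strong enough to exclude all the ``unintended'' reduced matrix models of $\mathcal{B}$. This requires a careful case analysis of the subdirectly irreducible De Morgan algebras $\mathbf{DM}_4,\mathbf{K}_3,\mathbf{B}_2$ equipped with each possible prime filter, checking that the added rule is falsified in all matrices outside the claimed characterizing class. For item (iv), one additionally needs that the combination $(DS)+(EM)$ forces the underlying algebra to be Boolean, which follows since $(EM)$ eliminates $\neither$ and $(DS)$ eliminates $\both$, leaving only $\mathbf{B}_2$ as admissible. Since this analysis is carried out in detail in \cite{rivieccio2017}, I would simply refer the reader to that source for the full argument.
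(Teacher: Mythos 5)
The paper gives no proof of this proposition at all: it is imported verbatim from the cited source (Theorem 3.4 of Rivieccio's paper on super-Belnap logics), so your primary move of simply citing that result is exactly what the paper does. Your supplementary sketch of the soundness-plus-completeness strategy is a reasonable outline of the standard argument (though your parenthetical claim that the reduced models of $\mathcal{B}$ are precisely the De Morgan algebras equipped with \emph{prime} filters is not accurate—reduced models of $\mathcal{B}$ need not have prime filters, e.g.\ $\langle\FDEAlg,\{\tvalue\}\rangle$), but since you explicitly defer to the source for the details, this does not affect the correctness of the proof-by-citation.
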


\begin{theorem} For logics above $\PPlogic$ we have the following relative axiomatizations:
\begin{enumerate}\itemsep0pt
    \item[(i)] $\Logic\langle\mathbf{PP}_5, {\uparrow}\neither\rangle = \mathcal{LP}^\cons = \PPlogic + (EM)$
    \item[(ii)] $\Logic\langle\mathbf{PP}_5, {\uparrow}\tvalue\rangle = \mathcal{K}_1^\cons = \PPlogic + (\mathcal{K}_1)$
    \item[(iii)] $\Logic\{\langle\mathbf{PP}_5, {\uparrow}\neither\rangle, \langle\mathbf{PP}_5, {\uparrow}\tvalue\rangle\} = \mathcal{K}_\leq^\cons = \PPlogic + (K_\leq)$
    \item[(iv)] $\Logic\langle\mathbf{PP}_4, {\uparrow}\tvalue\rangle = \mathcal{CL}^\cons = \PPlogic + (DS) + (EM)$
\end{enumerate}
\end{theorem}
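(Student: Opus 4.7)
My plan is to derive each item as a direct image of the corresponding item in Proposition~\ref{prop:4.6} under the operator $(\cdot)^\cons$, using Proposition~\ref{prop:4.5} together with the observation (stated in the paragraph immediately preceding Proposition~\ref{prop:4.5}'s successor) that classes of $\FDELogic$-models determining the same super-Belnap logic give rise, via $\DMPPExtend{}$, to $\cons$-matrix classes determining the same logic.

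First, I would verify the matrix identifications $\DMPPExtend{\langle\mathbf{K}_3, {\uparrow}\neither\rangle} = \langle\mathbf{PP}_5, {\uparrow}\neither\rangle$, $\DMPPExtend{\langle\mathbf{K}_3, \{\tvalue\}\rangle} = \langle\mathbf{PP}_5, {\uparrow}\tvalue\rangle$, and $\DMPPExtend{\langle\mathbf{B}_2, \{\tvalue\}\rangle} = \langle\mathbf{PP}_4, {\uparrow}\tvalue\rangle$. For the underlying algebras, this is the PP-analogue of the identifications $\ISAlg_5 = \DMISExtend{\mathbf{K}_3}$ and $\ISAlg_4 = \DMISExtend{\mathbf{B}_2}$ registered after Proposition~3.18, the two constructions coinciding on their $\DMSig$-reducts and differing only in whether one interprets $\cons$ or $\nabla$ on the new elements. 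For the designated sets, one checks that $D \cup \{\etvalue\}$ matches the indicated principal up-set in the expanded lattice from Figure~\ref{fig:pp_algebra}; e.g.\ $\{\neither,\tvalue\}\cup\{\etvalue\}={\uparrow}\neither$ in the chain $\efvalue\leq\fvalue\leq\neither\leq\tvalue\leq\etvalue$ of $\mathbf{PP}_5$.

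Second, I would establish the matrix-logic equalities in each item. For clause~(i), Proposition~\ref{prop:4.6}(i) says $\mathcal{LP}$ is simultaneously $\Logic\langle\mathbf{K}_3,{\uparrow}\neither\rangle$ and the logic of all its non-trivial models; the $(\cdot)^\cons$-invariance observation thus yields $\mathcal{LP}^\cons = \Logic\DMPPExtend{\langle\mathbf{K}_3,{\uparrow}\neither\rangle}$, which by the first step equals $\Logic\langle\mathbf{PP}_5,{\uparrow}\neither\rangle$. Clauses~(ii) and~(iv) are entirely analogous, while in clause~(iii) one uses a two-element generating class $\{\langle\mathbf{K}_3,{\uparrow}\neither\rangle,\langle\mathbf{K}_3,\{\tvalue\}\rangle\}$ and applies the same observation to classes of matrices, which is the setting in which it is stated.

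Third, I would obtain the relative axiomatizations by applying Proposition~\ref{prop:4.5} to each clause of Proposition~\ref{prop:4.6}, noting that $\PPlogic$ itself equals $\vdash_{\DMPPExtend{\mathcal{B}}}$ by the very same invariance argument applied to $\mathcal{B}=\Logic\langle\FDEAlg,{\uparrow}\both\rangle$. Thus $\mathcal{LP} = \mathcal{B}+(EM)$ transfers to $\mathcal{LP}^\cons = \PPlogic+(EM)$, and likewise $(K_1)$, $(K_\leq)$ and $(DS)+(EM)$ axiomatize $\mathcal{K}_1^\cons$, $\mathcal{K}_\leq^\cons$ and $\mathcal{CL}^\cons$ over $\PPlogic$, respectively. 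The main (minor) obstacle is bookkeeping of the two layers of the generated-logic observation, first to identify $\PPlogic$ with $\DMPPExtend{\mathcal{B}}$ and then to pass from the full class of non-trivial super-Belnap models to the single (or two-element) matrix given by Proposition~\ref{prop:4.6}; once these are in hand, the theorem falls out immediately.
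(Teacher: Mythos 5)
Your proposal is correct and follows essentially the same route as the paper: the paper's proof simply cites Proposition~\ref{prop:4.5} and Proposition~\ref{prop:4.6} together with the matrix identifications $\langle\mathbf{B}_2,\{\tvalue\}\rangle^\cons=\langle\mathbf{PP}_4,{\uparrow}\tvalue\rangle$ and $\langle\mathbf{K}_3,{\uparrow}x\rangle^\cons=\langle\mathbf{PP}_5,{\uparrow}x\rangle$ for $x\in\{\tvalue,\neither\}$, which is exactly the content of your three steps. Your additional bookkeeping (invoking the invariance of $(\cdot)^\cons$ across model classes determining the same super-Belnap logic to pass from the full class of non-trivial models to the single characteristic matrix) just makes explicit what the paper leaves implicit.
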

\begin{proof}
\sloppy
This follows directly from Proposition \ref{prop:4.5} and Proposition \ref{prop:4.6}, taking into account that 
$\langle\mathbf{B}_2, \{\tvalue\}\rangle^\cons = \langle\mathbf{PP}_4, {\uparrow}\tvalue\rangle$, and
for $x \in \{\tvalue, \neither\}$, we have $\langle\mathbf{K}_3, {\uparrow}x\rangle^\cons = \langle\mathbf{PP}_5, {\uparrow}x\rangle$.
\end{proof}%
\section{Final remarks}
\label{sec:finalremarks}


We have seen how to endow with a perfection connective logics characterized by matrices having a De Morgan algebraic reduct, offering two possible directions: either by appropriately expanding the corresponding matrices or by adding new rules of inference to an existing Hilbert-style axiomatization. 
In particular, by so enriching Dunn-Belnap's 4-valued logic we obtained the 6-valued order-preserving logic $\PPlogic$, associated to the variety of expanded algebras, which we called `perfect paradefinite algebras' 
and proved to be term-equivalent with the variety of involutive Stone algebras. 
It is worth mentioning that the one-one correspondence between both varieties can be used to introduce back-and-forth functors
that establish a categorical equivalence
between the corresponding algebraic categories.

By providing a Derivability Adjustment Theorem for $\PPlogic$ and its extensions, we have also shown that Boolean reasoning is fully recovered using De Morgan negation and the perfection operator. 
Notice, indeed, that adding the equation $\cons x \approx \top$ to a perfect paradefinite algebra, intuitively stating 
that every element is
Boolean,
what results 
in an
algebra that is (term-equivalent to) a Boolean algebra.

The equational basis of the variety of PP-algebras studied here was conceived having in mind the expected term-equivalence with the variety of IS-algebras. 
A natural path for future work is to drop this constraint and study De Morgan algebras enriched with perfection operators satisfying weaker equations,
and the corresponding logics.
Within such more general algebraic structures and the logics based thereon, and taking also into account the proposed comparison between  $\PPlogic$ and other \textbf{C}-systems and \textbf{D}-systems in the literature, one may for instance  consider two distinct negations (not necessarily respecting all De Morgan laws) instead of a single one that is at once paraconsistent and paracomplete, and possibly also two separate `recovery connectives', as, e.g., in~\cite{dodo2014, lahav2017}.

Yet another direction for future investigation would be to  enrich~$\PPlogic$ with an implication connective;
similar paths have recently been explored in~\cite{esteva2021}, but considering involutive distributive residuated lattices instead of De Morgan algebras.
A promising starting point for this research could be provided by the following observation. 
The algebra $\PPSix$, as a finite distributive lattice, has
an implicitly definable (and unique) intuitionistic implication given by  the relative pseudo-complement. By adding this operation to the propositional language and considering the corresponding logical matrix, one  thus obtains a conservative extension of $\PPlogic$ by the intuitionistic implication. We expect this logic to be algebraizable, but not necessarily self-extensional. On the other hand, a (weaker) self-extensional conservative extension of $\PPlogic$ may be obtained by considering the logic determined by the family of all matrices of type $\langle \PPSix, D \rangle$
based on $\PPSix$ (endowed with the relative pseudo-complement operation), with $D$ a lattice filter. 
Logics obtained in this way will have
as  algebraic counterparts (subclasses of) 
algebras that 
 carry both
a De Morgan negation and an intuitionistic implication: these structures have been studied in the literature under the names of 
\emph{symmetric Heyting algebras} (A.~Monteiro~\cite{Monteiro1980}) 
and \emph{De Morgan-Heyting algebras}
(H.P.~Sankappanavar~\cite{Sankappanavar1987}). From a technical point of view, an advantage of the proposed  approach is thus one may hope
to be able to
import results
from the well-developed theory of the above-mentined classes of algebras.
\bibliographystyle{eptcs}
\bibliography{bib}
\end{document}